\title{Short-range and long-range order: a transition in block-gluing behavior in Hom shifts.}
\author[1]{Silvère Gangloff\footnote{Corresponding author}}
\affil[1]{
	AGH University of Krakow, Faculty of Applied Mathematics,
	al. Mickiewicza 30,
	30-059 Krak\'ow,
	Poland
	\url{sgangloff@agh.edu.pl}
	}
\author[2]{Benjamin Hellouin de Menibus}
\affil[2]{
Université Paris-Saclay, CNRS, Laboratoire Interdisciplinaire des Sciences du Numérique, 91400, Orsay, France
\url{hellouin@lisn.fr}
Orcid ID : 0000-0001-5194-929X
}
\author[1,3]{Piotr Oprocha}
\affil[3]{
National Supercomputing Centre IT4Innovations, University of Ostrava,
	IRAFM,
	30. dubna 22, 70103 Ostrava,
	Czech Republic
 \url{oprocha@agh.edu.pl}
	}
\date{\today}
\newtheorem{theorem}{Theorem}[section]
\newtheorem*{theorem*}{Theorem}
\newtheorem{lemma}[theorem]{Lemma}
\newtheorem{corollary}[theorem]{Corollary}
\newtheorem{question}[theorem]{Question}
\newtheorem{definition}[theorem]{Definition}
\newtheorem{example}[theorem]{Example}
\newtheorem{conjecture}[theorem]{Conjecture}
\newtheorem*{problem*}{Problem}
\newtheorem{notation}[theorem]{Notation}
\newtheorem{proposition}[theorem]{Proposition}
\newtheorem{remark}[theorem]{Remark}
\newtheorem{denomination}[theorem]{Denomination}
\tikzset{every loop/.style={min distance=2cm}}
\newcommand\bt[1]{\varphi(#1)}
\newcommand\quadcover[1]{\mathcal{U}^\square_{#1}}
\begin{document}

\maketitle

\begin{abstract}
\emph{Hom shifts}
form a class of multidimensional shifts of finite type (SFT) and consist of colorings of the grid $\mathbb Z^2$ where adjacent colours must be neighbors in a fixed finite undirected simple graph $G$. This class includes several
important statistical physics models such as the hard square model. The \emph{gluing gap} measures how far any two square patterns of size $n$ can be glued, and it can be seen as a measure of the range of order which affects the possibility to compute the entropy (or free energy per site) of a shift.
This motivates a study of the possible behaviors of the gluing gap. 
The class of Hom shifts is interesting because it allows us to describe dynamical properties, in particular mixing-type ones in the context of this article, using algebraic graph theory, which has received a lot of attention recently. Improving some former work of N.~Chandgotia and B.~Marcus, we prove that the gluing gap either depends linearly on $n$ or is dominated by $\log(n)$. We also find a Hom shift with gap $\Theta(\log(n))$, infirming a conjecture formulated by R.~Pavlov and M.~Schraudner. The physical interest 
of these results is to better understand the
transition from short-range to long-range order (respectively sublogarithmic and linear gluing gap), which is reflected in whether some parameter, the square cover, is finite or infinite.
\end{abstract}


\section{Introduction}

\textbf{Multidimensional shifts of finite type} are multidimensional dynamical systems defined as the action of the group $\mathbb Z^d$, via the shift, on 
the compact subset of $\mathcal{A}^{\mathbb{Z}^d}$ whose elements are the ones in which no pattern in $\mathcal{F}$ appear, where $\mathcal{A}$ is a finite set, $d \ge 2$, and 
$\mathcal{F}$ is a finite set of patterns on $\mathcal{A}$. They appear in various areas of mathematics: in particular as a straightforward generalisation of (unidimensional) subshifts of finite type, which themselves were first used by J.Hadamard in his work on geodesic flows on surfaces of negative curvature~\cite{H98}; in statistical physics, as lattice models such as the hard square model and square ice model; in mathematical logic, with the work of H.Wang~\cite{W61} on tilings of the plane with square tiles.

\paragraph*{Topological entropy.} The topological entropy of a shift of finite type, which in statistical physics is usually called free energy per site, is the asymptotic growth rate of the number of restrictions 
on $\llbracket 0,n-1\rrbracket^d$ of its elements. In a celebrated article, E.H.Lieb~\cite{L67} computed an exact formula of topological entropy for the square ice model, with the rationale of developing tools for computing efficiently some physically relevant quantities for models with high number of variables. Unfortunately, the method proposed by Lieb does not generalize to other shifts of finite type easily. In general, computing exactly topological entropy of a multidimensional shift of finite type is a very hard problem.

\paragraph*{Uncomputability of entropy in general.}
As a matter of fact, L.Hurd, J.Kari and K.Culik~\cite{HKC92} have proved that topological entropy is uncomputable for cellular automata, which form a subclass of multidimensional shifts of finite type. This leaves no hope to find a general method to compute topological entropy. Later, M.~Hochman and T.~Meyerovitch~\cite{HM11} 
provided a characterization of possible values of topological entropy on the class of shifts of finite type of dimension $d$, for any $d \ge 2$, as the non-negative real number which are computable from above, tightening the relation between multidimensional symbolic dynamics and computability theory. 

\paragraph*{Computability under block gluing property.}
On the other hand, algorithms have been developed in order to find rational approximations of 
topological entropy with arbitrary precision, in particular cases such as the hard square model~\cite{P10}. Furthermore, there exists a general method to compute topological entropy this way for shifts of finite type 
which satisfy the block gluing property in two dimensions $(d=2)$~\cite{PS15}. This property consists in the possibility to `glue' any pair of square patterns of the same size, provided that the distance between them is greater than a fixed constant, and then fill the grid $\mathbb{Z}^d$ into an element of the shift. 

\paragraph*{Edge of uncomputability.}
Together with M.~Sablik~\cite{GS18}, the first author proposed a quantification of the block gluing property, in which a function of the size of the patterns, called \textit{gap function}, replaces the constant. This function reflects the `range of order' in the system: the larger this function, the farther the presence of one particular pattern has 
an influence over the configuration. 
They studied the `edge of uncomputability' (by analogy with the edge of chaos) for this quantification, with the purpose of understanding \textit{how} the uncomputability phenomenon appears. 
In particular, they identified the `area' in between 
logarithmic and linear functions as where uncomputability occurs for topological entropy. 
Unfortunately, no tool is available in order to analyze shifts of finite type in this area. In particular, it is not known if there exists a shift of finite type in two 
dimensions which has the block gluing property with a gap function strictly between logarithmic and linear. 
On the class of decidable shifts, close to the one of shifts of finite type, the first and second authors
identified~\cite{GH18} a threshold at which uncomputability of 
entropy occurs, defined by the condition
\[
\sum_n \frac{f(n)}{n^2} = +\infty,
\]
where $f$ is the gap function. This suggests that if it is possible to find a similar threshold for shifts of finite type, it should be strictly in between logarithmic and linear.

\paragraph*{Hom shifts.} In order to understand better block 
gluing classes, which group together shifts of finite type having equivalent gap functions,
the strategy that we propose here is to restrict the scope to a natural subclass 
of the one of two-dimensional shifts of finite type. In this text, we consider Hom shifts, 
that is, the set of graph morphisms from $\mathbb{Z}^d$ to $G$, where $G$ is an undirected, simple, connected graph (self-loops are allowed). In the symbolic dynamics context, they were studied by N.~Chandgotia~\cite{C17}, who coined the name `Hom shifts', borrowing the definition from G.~Brightwell and P.~Winkler~\cite{BW00}. Although entropy is computable on Hom shifts \cite{F97}, this is a natural class to better understand the block-gluing property 
for the following reasons. \textbf{1.} First, it is related to statistical physics models, in the sense that the hard square shift is 
included in it, and the square ice model is related to the set of three-colourings of $\mathbb Z^2$ (in particular they have the same entropy), also in this class. \textbf{2.} While several problems are undecidable for multidimensional shifts of finite type, 
many of them become decidable for Hom shifts. For instance, 
Hom shifts are defined by a symmetric set of forbidden patterns, and such shifts have algorithmically computable entropy~\cite{F97} - a fact related to the intuition that it is impossible to embed universal computation in shifts under this (strong) constraint. 
\textbf{3.} The conceptual richness of graph theory should help to forge concepts in order to analyze 
block gluing classes in this restricted context, concepts 
which may then be extended to the general context of multidimensional shifts of finite type. 

\paragraph*{Mixing-type properties and algebraic topology.}
Mixing-type properties have been studied for Hom shifts 
in the recent years, in particular topological and measure-theoretical mixing \cite{B17, CM18, B21}.
In \cite{CM18}, the authors express gap functions for mixing-type properties of Hom shifts in terms of the diameter of 
the graph of walks of length $n$ on $G$. They use concepts of algebraic topological nature defined on finite graphs (as done in algebraic graph theory), in particular the universal cover - related to the fundamental group - in order to prove that whenever the graph $G$ is square-free, the gap function is $O(1)$ or $\Theta(n)$.
R.~Pavlov \& M.~Schraudner conjectured that this holds for general graphs as well (section \textbf{6.3} in \emph{op. cit.}). 

The concepts and tools used in~\cite{CM18} and the square free-hypothesis appear also in works related to homomorphism reconfiguration in graph theory, using a non-standard reconfiguration step (see for instance \cite{W20}). As well concepts of topological algebraic nature appear also in other works on symbolic dynamics, such as for instance the projective fundamental group~\cite{GP95}. For more details on the link between our tools and algebraic topology, see the second half of Section~\ref{section.universal}.

\paragraph*{This article.} We focus here on two-dimensional Hom shifts. We consider the problem of characterizing the possible equivalence classes of gap functions for a property 
slightly more general than classical block gluing, called `phased block gluing' by N.Chandgotia. 
Any Hom shift given by a finite graph $G$ is phased block gluing for some gap function, 
denoted by $\gamma_G$. The problem that we are interested in here is the following: 

\begin{problem*}
\textbf{1.} What are the possible equivalence classes $\Theta(\gamma_G)$ for all finite graphs $G$? \textbf{2.} Given a graph $G$, is it decidable which equivalence class the function $\gamma_G$ belongs to?\end{problem*}

Our main results, which address the first part of this problem, are the following:
\begin{theorem*}
For any finite graph $G$, $\gamma_G(n) = \Theta(n)$ or $O(\log(n))$ (Theorem~\ref{thm:main1}). There exists a graph $K$ such that $\gamma_K(n) = \Theta(\log(n))$ (Theorem~\ref{theorem.kenkatabami}).
\end{theorem*}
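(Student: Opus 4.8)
The organising object is the \emph{square cover} \(\quadcover{G}\): fix a basepoint \(v_0\in V(G)\), take all walks of \(G\) starting at \(v_0\) modulo the equivalence generated by backtrack cancellations and \emph{square moves} (replacing a length-two subwalk \(a\,b\,c\) by \(a\,b'\,c\) whenever \(a,b,c,b'\) span a \(4\)-cycle), and join two classes by an edge when one extends the other by a single step. This is a graph covering of \(G\) in which every \(4\)-cycle lifts to a cycle, and it is \emph{square-simply-connected}: a \(4\)-cycle of \(\quadcover{G}\) bounds because its image does, so \(\quadcover{\quadcover{G}}=\quadcover{G}\). Since the cycle space of \(\mathbb Z^2\) is generated by unit squares, every morphism \(\mathbb Z^2\to G\) lifts along \(\quadcover{G}\to G\), and likewise every morphism of a rectangle; hence each \(n\times n\) pattern of \(X_G\) lifts to one of \(X_{\quadcover{G}}\), and a phased gluing upstairs projects (equivariantly) to one downstairs, giving \(\gamma_G(n)\le\gamma_{\quadcover{G}}(n)\). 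So for the upper bounds we may assume \emph{either} that \(\quadcover{G}\) is infinite, \emph{or} that \(G\) is a finite square-simply-connected graph. Following \cite{CM18} I would also first reduce \(\gamma_G(n)\), up to additive constants, to the growth of the diameters of the \emph{\(n\)-column graphs} \(C_n(G)\): vertices are the height-\(n\) columns of \(X_G\) that extend to \(n\times n\) squares (up to phase), edges are pairs of columns admissible side by side, and a gluing of two squares is a path in \(C_n(G)\) between their inner columns.

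\textbf{The dichotomy (Theorem~\ref{thm:main1}).} One inequality is standard: for every connected \(G\) with an edge, \(\gamma_G(n)=O(n)\), since an \(n\times n\) square can be ``flattened'' toward a fixed periodic column of \(X_G\) within \(O(n)\) further columns, so any two squares meet at distance \(O(n)\). It remains to separate the two regimes. If \(\quadcover{G}\) is infinite then, being an infinite square-simply-connected bounded-degree graph, it contains a geodesic ray; pulling it back yields for each \(n\) a pair of \(n\times n\) patterns whose lifts to \(\quadcover{G}\) are displaced by \(\Theta(n)\), whereas side-by-side columns change the lift by \(O(1)\); reconciling them costs \(\Omega(n)\) columns, so \(\gamma_G(n)=\Theta(n)\) — the mechanism of \cite{CM18} for square-free graphs, now carried through \(\quadcover{G}\) rather than the topological universal cover. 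If instead \(\quadcover{G}\) is finite (so we may take \(G=\quadcover{G}\)), I would prove \(\gamma_G(n)=O(\log n)\) by a \emph{renormalisation} recursion: two \(n\times n\) squares can be glued at distance \(d(n)\) with \(d(n)\le d(\lceil n/2\rceil)+C\), \(C=C(G)\), whence \(d(n)=O(\log n)\). Finiteness of \(\quadcover{G}\) is precisely what supplies the self-similar coarsening of a height-\(n\) column to a height-\(\lceil n/2\rceil\) one and bounds the local repair cost. The delicate point — the first main obstacle — is the bookkeeping of the recursion: the two half-height gluings must be made compatible along their shared row, which is arranged by first normalising that row's square-homotopy class, possible at cost \(O(1)\) exactly because \(\pi_1^\square(G)\) is trivial.

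\textbf{A graph with logarithmic gap (Theorem~\ref{theorem.kenkatabami}).} The graph \(K\) is a specific finite graph (the ``kenkatabami'' graph) designed to be square-simply-connected, so that \(\gamma_K(n)=O(\log n)\) is already delivered by the dichotomy; the work is the matching lower bound \(\gamma_K(n)=\Omega(\log n)\). The plan is to single out inside \(X_K\) a family of configurations carrying an ``oscillation'' whose period runs over the powers of two, together with a \emph{logarithmic monovariant} \(\psi\) on columns — morally \(\psi(\text{column})\approx\log_2(\text{period}^{-1})\) — with two features: (i) \(\psi\) varies by \(O(1)\) between side-by-side columns, because \(K\) is built so that the period of an oscillation can only be doubled a bounded number of times within one column; and (ii) for each \(n\) there are \(n\times n\) patterns \(P_n,Q_n\in X_K\) forcing \(\psi\approx\log_2 n\) on the right edge of \(P_n\) and \(\psi\approx 0\) on the left edge of \(Q_n\). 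Any strip gluing \(P_n\) to \(Q_n\) then has width \(\Omega(\log n)\). The crux — the second and principal obstacle — is the simultaneous design of \(K\): rich enough to force the period and to forbid fast re-synchronisation (the source of the \(\Omega(\log n)\) bound), yet still square-simply-connected so that the \(O(\log n)\) ceiling applies and the gap is exactly \(\Theta(\log n)\). Verifying square-simple-connectedness of \(K\) (for instance by an explicit contraction of \(\quadcover{K}\) through folds and square-homotopies) and verifying that \(P_n,Q_n\) genuinely force the stated values of \(\psi\) are the two computations on which the whole argument rests.
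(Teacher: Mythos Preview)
Your setup via the square cover, the lifting of configurations, the reduction \(\gamma_G\le\gamma_{\quadcover{G}}\), and the \(\Theta(n)\) lower bound when \(\quadcover{G}\) is infinite (via a geodesic ray and the observation that side-by-side columns move the lift by \(O(1)\)) all match the paper essentially line for line. Likewise your plan for the Ken--katabami lower bound is the paper's argument in different words: the paper takes the outer \(6\)-cycle \(c\), defines \(\mu_c(p)\) as the maximal power of \(c\) occurring contiguously in a column \(p\), and shows by a direct case analysis that \(\mu_c\) can at most halve (up to an additive constant) between adjacent columns; your monovariant \(\psi\approx\log_2\mu_c\) is exactly this.

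The genuine gap is the \(O(\log n)\) upper bound in the finite-cover case. Your proposed recursion \(d(n)\le d(\lceil n/2\rceil)+C\) via ``coarsening a height-\(n\) column to a height-\(\lceil n/2\rceil\) one'' is not substantiated: you do not say what the coarsening map is, why square-simple-connectedness produces one, or why the two half-height gluings can be synchronised at cost \(O(1)\) \emph{column-by-column} (normalising the square-homotopy class of a single shared row does not by itself align the entire interface). The paper does \emph{not} recurse on the height \(n\). Instead it (i) reduces columns to cycles, (ii) represents any cycle as a bounded-depth ``cactus'' of simple cycles (depth \(\le|V_G|\)), and (iii) shows that a power \(c^{m}\) of a fixed simple cycle, padded with spines, can be collapsed to pure spines in \(O(\log m)\) steps by a dichotomic process that turns \(c^{2^k}\) into two copies of \(c^{2^{k-1}}\) plus spines in \(O(1)\) time; a parallelisation lemma lets this run simultaneously on all leaves of the cactus. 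The logarithm thus comes from halving the \emph{exponent of a cycle power}, not from halving the column height, and the bounded cactus depth contributes only a constant factor. Your sketch does not contain or approximate this mechanism, and without it the recursion you state is an assertion rather than an argument.
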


In particular, we disconfirm R.~Pavlov and M.~Schraudner's conjecture. In order to prove this theorem, we extend 
the methods developed in \cite{CM18}, and remove the 
square free hypothesis by considering, instead of the universal cover, its quotient by squares of $G$. 

The restriction to dimension two is due to the fact that some of our techniques cannot be easily generalized to higher dimensions. This applies in particular to the representation of cycles as `trees of simple cycles' (see Section~\ref{section.cactus}), and its further applications in Section~\ref{section.finite.square.cover}. Despite this restriction, this setting still covers a plethora of important examples, since several statistical physics models are two-dimensional.

Mathematically, the impact of this result is twofold. First, we deepen our understanding of the relationship between dynamical properties of Hom shifts (and by extension multidimensional shifts of finite type) and algebraic topology, because we prove a tight correspondence between behaviors of phased block gluing gap functions in the `upper part' of the spectrum of possible behaviors properties of the universal cover - finiteness or infiniteness of the quotient by squares - on the whole class of Hom shifts. Second, we develop technical tools which enable us to prove that there are no Hom shifts whose gluing gap function is intermediate in the sense that they are strictly between $O(\log(n))$ and $\Theta(n)$, which can serve as prototype for the general context of block-gluing in multidimensional shifts of finite type. The perspective of a complete classification of phased block gluing classes, motivated by a better understanding of these classes, allows us to expect further tools of topological algebraic nature to be developed along the way, which can be of interest in themselves, or may be useful in order to answer questions about Hom shifts or multidimensional shifts of finite type in general. 
In particular we hope that algebraic properties may help determining the edge of uncomputability, for computing or finding a closed form for entropy, but also other questions related to entropy, entropy minimality, and mixing-type properties. In this direction, we conjecture the following: 

\begin{conjecture}
Every two-dimensional Hom shift is either $O(1)$-phased block gluing, $\Theta(\log(n))$-phased block gluing or $\Theta(n)$-phased block gluing.
\end{conjecture}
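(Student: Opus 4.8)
The plan is to build on the dichotomy of Theorem~\ref{thm:main1}: since $\gamma_G(n)=\Theta(n)$ or $O(\log n)$, and (by the analysis of Section~\ref{section.finite.square.cover}) the former case is exactly the one where the square cover of $G$ is infinite, it suffices to treat graphs with a \emph{finite} square cover and show that for these $\gamma_G$ is either $O(1)$ or $\Theta(\log n)$ — equivalently, that no genuinely sub-logarithmic, non-bounded behavior can occur. I would first try to pin down the $O(1)$ case by an algebraic characterization: identify a combinatorial condition $(\star)$ on $G$ — plausibly of the form ``$G$ folds, relative to its squares, onto a graph that is $O(1)$-phased block gluing for an obvious reason (a reflexive complete graph, a graph with a safe symbol, etc.)'' — and prove that $(\star)$ is equivalent to $\gamma_G=O(1)$. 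The easy direction should follow by transporting gluing witnesses along the folding; the work is to make the list of ``obviously $O(1)$'' graphs closed enough under the operations that arise.

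The second and main step is the lower bound: assuming $G$ has a finite square cover but fails $(\star)$, produce for every $n$ a pair of $n\times n$ patterns that cannot be phased-glued at any distance $d\le c\log n$, for a fixed $c>0$. I would look for a gap-amplification / self-improvement argument in the spirit of the square-free case of \cite{CM18}: show that $\gamma_G(n)=o(\log n)$ forces $\gamma_G=O(1)$, by taking a short gluing witness between two far-apart patterns, lifting it to the universal cover, passing to the quotient by squares, and using finiteness of the square cover to absorb the remaining homotopical discrepancy into a bounded correction — thereby upgrading $o(\log n)$ to $O(1)$. In this picture the logarithm is precisely the generic cost of unwinding, in the universal cover, the homotopy class of a length-$n$ path that does not collapse modulo squares: finiteness of the square cover caps that cost at $O(\log n)$ (this is essentially the content of Section~\ref{section.finite.square.cover}), while failure of $(\star)$ should force it to be $\Omega(\log n)$ on a suitable family.

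I expect the crux to be exactly this forcing step: one needs an invariant — presumably read off the finite square cover together with the action of the squares, or off the representation of cycles as trees of simple cycles from Section~\ref{section.cactus} — that is at once monotone enough to yield a clean $\Omega(\log n)$ bound and sensitive enough to vanish precisely on the $(\star)$ graphs. Two further technical difficulties compound this. First, the ``phased'' aspect means the lower-bound patterns must defeat \emph{every} translation, so the witnessing family must be built so that the obstruction is spread out rather than localized. Second, the folding and retraction machinery that runs cleanly for square-free graphs has to be redeveloped ``modulo squares'', which is exactly the kind of topological-algebraic tool the introduction anticipates. A natural intermediate milestone would be to settle the conjecture first for graphs whose square cover is not merely finite but has a bounded-size fundamental group, where the homotopical bookkeeping behind both the $O(\log n)$ upper bound and the $\Omega(\log n)$ lower bound is the most transparent.
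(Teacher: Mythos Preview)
The statement you are attempting is a \emph{conjecture} in the paper, not a theorem: the authors explicitly leave it open, both in the introduction and again in Section~\ref{section.conclusion}. There is therefore no proof in the paper to compare your proposal against.

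What you have written is not a proof but a research programme, and you are candid about this: the condition $(\star)$ characterising the $O(1)$ case is never actually defined (``plausibly of the form\ldots''), and the central step --- constructing an invariant that forces $\gamma_G = \Omega(\log n)$ whenever $(\star)$ fails --- is identified as ``the crux'' and left as a hope. These are precisely the two pieces the paper is missing as well. The authors remark informally that what seems to separate the $O(1)$ and $\Theta(\log n)$ cases is ``whether every cycle can be deformed to a trivial cycle so that no intermediate cycle is larger than the original'', but add that ``formalizing this intuition has proven difficult''. Your $(\star)$ is at the same level of vagueness. The self-improvement idea (upgrading $o(\log n)$ to $O(1)$ via the finite square cover) is a reasonable heuristic, but nothing in the paper or in your outline supplies the monotone invariant that would make it go through; the only lower-bound technique actually exhibited in the paper is the ad~hoc $c$-block argument for the Ken-katabami graph in Theorem~\ref{theorem.kenkatabami}, which does not obviously generalise. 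In short, your outline correctly locates the difficulty but does not resolve it, and the conjecture remains open.
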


\paragraph*{Physical interpretation.} 
We mentioned earlier that Hom shifts often appear as simple models in statistical physics. The above conjecture states that mixing properties of Hom shifts are \emph{rigid}, in the sense that they can be classified into three classes with no possible intermediate behavior. In this sense they do not correspond to phase transitions in the classical meaning of the term: a sudden change of behavior when some parameter (usually a real number) passes a threshold. Still, we believe that this phenomenon is related to phase transitions: if one considers a system of a family of systems that can be represented by Hom shifts and change some parameters, then this system must go from a mixing class to the next without any intermediate behavior. Since mixing properties are a description of the range of the order present in the system, this corresponds to a sudden change of behavior of the system that we hope to be mathematically tractable. In Section~\ref{section.conclusion}, we leave some open questions in this direction.

\paragraph*{Structure of the article}
First, in Section~\ref{sec:properties}, we relate properties of $G$ and the associated Hom shift $X_G$, and in particular how block-gluing on $X_G$ translates in terms of graph properties. In Section~\ref{section.decomposability}, we define the notion of universal cover, already used by Chandgotia. We introduce a notion of square decomposition for cycles of $G$, which lets us define a square cover $\quadcover{G}$ by quotienting the universal cover by the squares of $G$. This lets us prove that if the square cover of $G$ is infinite, then $\gamma_G (n) = \Theta(n)$. In Section~\ref{section.cactus}, we define a representation of cycles on $G$ as a tree of simple cycles. This representation is used in Section~\ref{section.finite.square.cover} to prove that if the square cover of $G$ is finite, then  $\gamma_G (n) = O(\log(n))$. In Section~\ref{section.log.existence}, we exhibit a graph $K$ such that $\gamma_K \in \Theta(\log)$. Finally, we briefly discuss in Section~\ref{section.conclusion} some problems that are left open.

\section{Definitions and notations\label{section.definitions}}

For any set $S$, we denote by $S^{*}$ the set of finite words on $S$. For a word $u$, we denote the number of its letters by $|u|$. We usually write $u$ as $u_0 \hdots u_{|u|-1}$. The empty word is denoted by $\epsilon$. Let us denote $\mathbb{N}^{*} = \{1, 2, \dots \}$ the set of positive integers and set
$\mathbb{N}=\mathbb{N}^{*} \cup \{0\}$. For all integers $a,b \in \mathbb{Z}$, we denote by $\llbracket a , b \rrbracket$ the interval $\{j \in \mathbb{Z} : a \le j \le b \}$. Similarly $\rrbracket a , b \llbracket:=\llbracket a+1 , b-1 \rrbracket$. Let also $\|\cdot\|_{\infty}$ be the norm defined by $\|\textbf{k}\|_{\infty} := \max(|\textbf{k}_1|,|\textbf{k}_2|)$ for all $k\in\mathbb Z^2$.

\subsection{Shifts}

Let us consider some finite set $\mathcal{A}$. A (two-dimensional) \textit{pattern} $p$ on $\mathcal{A}$ is an element of $\mathcal{A}^{\mathbb{U}}$, where $\mathbb{U}$ is a finite subset of $\mathbb{Z}^2$, and is called the \textit{support} of $p$. We say that $p$ \textit{appears} in an element $x$ of $\mathcal{A}^{\mathbb{Z}^2}$ when the restriction of $x$ to some $\textbf{u} + \mathbb{U}$, $\textbf{u} \in \mathbb{Z}^2$, is equal to $p$. 
A \textit{block pattern} is a pattern on support $\llbracket 0, n-1\rrbracket^2$ for some $n \ge 1$, which is called the \textit{size} of this pattern. 
The \textit{shift action} on $\mathcal{A}^{\mathbb{Z}^2}$ is the 
action of the group $\mathbb{Z}^2$ on this set defined by 
$\sigma_{\textbf{v}}(x) = (x_{\textbf{u}+\textbf{v}})_{\textbf{u} \in \mathbb{Z}^2}$ for all $\textbf{v} \in \mathbb{Z}^2$ and $x \in \mathcal{A}^{\mathbb{Z}^2}$.
We endow $\mathcal{A}^{\mathbb{Z}^2}$ with the product topology, defined by discrete topology on $\mathcal{A}$. This makes $\mathcal{A}^{\mathbb{Z}^2}$ a compact metrizable space and $\sigma_{\textbf{v}}$ continuous for every $\textbf{v}$.

A \textit{shift} on alphabet $\mathcal{A}$ is 
any compact subset $X$ of $\mathcal{A}^{\mathbb{Z}^2}$ which is  invariant under the shift action. A pattern is said to be \textit{globally admissible} for a shift $X$ when it appears in at least one of its elements.
Provided a set of patterns $\mathcal{F}$ on alphabet $\mathcal{A}$, we denote by $X_{\mathcal{F}}$ the shift on alphabet $\mathcal{A}$ whose elements are the ones in which no element of $\mathcal{F}$ appear. 
A shift $X$ is said to be of \textit{finite type} when there exists a finite $\mathcal{F}$ such 
that $X=X_{\mathcal{F}}$. Provided such a set $\mathcal{F}$, we say that a pattern $p$ is \textit{locally admissible} when no element of $\mathcal{F}$ appears in $p$. 

\subsection{Graphs}
In the whole text $G = (V_G,E_G)$ is some \textit{undirected}, simple and connected graph, where $V_G$ denotes the set of vertices of $G$ and $E_G$ its set of edges. Depending on the context, this graph may not necessarily be finite. Whenever we consider a graph $H$, we denote by $V_H$ the set of its vertices and $E_H$ the set of its edges. 

\begin{definition}
A \textbf{walk} on the graph $G$ is a non-empty word $p$ in $V_G^{*}$ such that for all $k \le |p|-2$, $(p_k, p_{k+1}) \in E_G$. We denote by $l(p)$ the number $|p|-1$, and call it the \textbf{length} of $p$ (equivalently, this is the number of edges that the walk follows). A \textbf{cycle} on $G$ is a walk $c$ 
such that $c_0 = c_{l(c)}$. It is said to be \textbf{simple} when $i < j$ and $c_i = c_j$ imply that $i= 0$ and $j = l(c)$. Similarly we say that a walk $p$ is  \textbf{simple} when for all $i \neq j$, $p_i \neq p_j$.
\end{definition}

\begin{notation}\label{notation.simple.cycle}
$\mathcal{C}^0_G$ is the set of simple cycles of $G$. 
\end{notation}

\begin{notation}\label{notation:opposite}
For all walk $p$, we will denote $p^{-1}$ the walk $p_{l(p)} \ldots p_0$.
\end{notation}

\begin{definition}
A \textbf{spine} on $a\in V_G$ is any cycle of length $2$ starting and finishing at $a$. A walk is said to be \textbf{non-backtracking} if it has no spine as a subword. 
\end{definition} 

\begin{notation}
We denote by $\varphi$ the function from the set of walks on $G$ to itself defined as follows: for every walk $p$, $\bt{p}$ is obtained from $p$ by replacing successively every spine $aba$ by $a$ until there is none left. $\varphi$ is well-defined because the remaining word does not depend on the order in which the spines are replaced. 
\end{notation}

\begin{notation}
For a walk $p$ of length $n \ge 1$ on $G$, say $p=p_0 \hdots p_{n}$, we denote by $\rho_l(p)$ (resp. $\rho_r(p)$) the set of walks of the form 
\[p_1 \hdots p_{n} x \quad (resp. \ x p_0 \hdots p_{n-1})\quad \text{for }x\in V_G.\] 
An element of this set is called a \textbf{left shift} (resp. a right shift) of $p$.
\end{notation}

\begin{notation}
For two walks $p$ and $q$ such that $q_0 = p_{l(p)}$, denote by $p \odot q$ the 
walk $p_0 \hdots p_{l(p)} q_1 \hdots q_{l(q)}$.
For any cycle $c$, denote by $c^n$, $n \ge 1$ the cycles such 
that for all $n \ge 2$, $c^n = c \odot c^{n-1}$ and $c^1 = c$.
\end{notation}

\begin{notation}
For any pair of vertices $a,b \in V_G$, we denote by $\delta(a,b)$ the shortest length of a walk in $G$ which begins at $a$ and ends at $b$. The \textbf{diameter} of $G$ is: 
\[\text{diam}(G) := \sup_{a,b \in V_G} \delta(a,b).\]
\end{notation}

\begin{definition}
A \emph{graph homomorphism} from $G_1$ to $G_2$ is a function $\psi: V_{G_1} \to V_{G_2}$ such that $(a,b)\in E_{G_1} \implies (\psi(a),\psi(b))\in E_{G_2}$.
\end{definition}

\subsection{Hom shifts}

\begin{notation}
The two-dimensional \textbf{Hom shift} corresponding to the graph $G$ is the shift $X_G$ on alphabet $V_G$ such that $x \in V_G^{\mathbb{Z}^2}$ is an element of $X_G$ if and only if 
for all $\textbf{u}, \textbf{v} \in \mathbb{Z}^2$ such that $\|\textbf{u}-\textbf{v}\|_{\infty}=1$, $x_{\textbf{u}}$ and $x_{\textbf{v}}$ are neighbors in $G$. 
\end{notation}

\begin{remark}
We may view $\mathbb Z^2$ as a graph such that $(u,v)\in E_{\mathbb Z^2}$ if and only if $\|u-v\|=1$. This way, each $x\in X_G$ can be viewed as a graph homomorphism $x\colon \mathbb Z^2\to G$. Then the set $X_G$ can be seen as the set of graph homomorphisms, which explains the name `Hom shift'.
\end{remark}

\begin{remark}
If $G$ is finite, then $X_G$ is a shift of finite type. Indeed, 
denoting by $\mathcal{F}_G$ the set of patterns $vw$ and $ \def\arraystretch{0.5}\begin{array}{c} v \\ w \end{array}$, where $(v,w) \notin E_G$, we have 
$X_G = X_{\mathcal{F}_G}$. Whenever we consider locally admissible patterns for $X_G$, this notion is relative to this set 
$\mathcal{F}_G$.  
\end{remark}

\begin{remark}
A pattern $p$ 
on support $\mathbb{U}$ is locally 
admissible for $X_G$ when there is a graph homomorphism from
$\mathbb{U}$ to $G$, where $\mathbb{U}$ is seen as a subgraph of the grid $\mathbb{Z}^2$.
\end{remark}

We denote by $L_G^n$ the set of walks of length $n$ on $G$.

\begin{notation}
For any integer $n \ge 0$, denote $X^{(n)}_G$ the subset of $\left(L^{n}_G \right)^{\mathbb{Z}}$ whose elements $x$ are such that there exists some $z \in X_G$ such that $x = z_{|\llbracket 0 , n \rrbracket \times \mathbb{Z}}$.
\end{notation}

\begin{notation}
For all $n \ge 1$, let us denote by $\Delta_G^n$ the graph whose vertices are 
the elements of $L^n_G$, and whose edges are the 
pairs $(p,q) \in L^n_G \times L^n_G$ such that $x_0 = p$ and $x_1 = q$ for some $x\in X^{(n)}_G$. For all $n$ and $(p,q) \in L^n_G \times L^n_G$, we denote by $d_G(p,q)$ the distance between $p$ and $q$, defined as the smallest length of a walk on $\Delta_G^n$ 
which begins at $p$ and ends on $q$. 
\end{notation}

In other words, the graph $\Delta_G^n$ tells which walks can be written next to each other, as vertical or horizontal patterns, in an element of $X_G$. 

\begin{remark}\label{remark.shift.neighbors}
For every walk $p$ and right or left shift $q$ of $p$ with $p\neq q$, we have $d_G(p,q) = 1$.
\end{remark}

\subsection{Block-gluing}

For two subsets $\mathbb{U}$ and $\mathbb{U}'$ of 
$\mathbb{Z}^2$, we set $\delta(\mathbb{U},\mathbb{U}') := \min_{\substack{\textbf{u} \in \mathbb{U}\\ \textbf{u}' \in \mathbb{U}'}} \|\textbf{u}-\textbf{u}'\|_{\infty}$.

\begin{definition}
Let us consider a function $f : \mathbb{N}^{*} \rightarrow \mathbb{N}^{}$, and an integer $k \in \mathbb{N}^{*}$. A shift on some alphabet $\mathcal{A}$ is said to be $(f,k)$-\textbf{phased block gluing} when, for every globally admissible block patterns $p$ and $p'$ having the same size $n$, 
and $\textbf{u},\textbf{u}' \in \mathbb{Z}^2$ such that 
\[\delta\left(\textbf{u} + \llbracket 0,n-1\rrbracket^2, \textbf{u}' + \llbracket 0,n-1\rrbracket^2 \right) \ge f(n),\]
there exists some $x\in X$ and some $\textbf{v} \in \mathbb{Z}^2$
such that $\|\textbf{v}\|_{\infty} < k$, $x_{\textbf{u} + \llbracket 0,n-1\rrbracket^2}=p$ and $x_{\textbf{u}'+\textbf{v} + \llbracket 0,n-1\rrbracket^2}=p'$. A shift which is $(f,1)$-\textbf{phased block gluing} for some $f$ is simply said to be $f$-\textbf{block gluing}. A shift which is $(f,k)$-phased block gluing for some $f$ and $k \ge 1$ is said to be \textbf{phased block gluing}.
\end{definition}

\begin{definition}
Whenever a shift $X$ is phased block gluing, we call phase of $X$ the minimal integer k such that $X$ is $(f, k)$-block gluing for some function $f$. We  denote the phase of $X$ by $\theta_X$.
\end{definition}

\begin{definition}
When a shift $X$ is phased-block-gluing, we denote by $\gamma_X : \mathbb{N}^\ast \rightarrow \mathbb{N}$ the minimal function such that $X$ is $(\gamma_X, \theta_X)$-phased block gluing. That is, for any function $f : \mathbb{N}^{*} \rightarrow \mathbb{N}$ such that $f(n)< \gamma_X(n)$ for some $n\in\mathbb{N}^\ast$, $X$ is not $(f,\theta_X)$-phased block gluing.
The function $\gamma_X$ is called the \textbf{gap function} of $X$ for the phased block-gluing property.
\end{definition}

In general it is difficult to compute exactly or obtain a concrete description of a gap function. We instead look at equivalence classes defined as follows:

\begin{notation}
Let us consider two functions $f,g : \mathbb{N}^{*} \rightarrow \mathbb{N}$. We write $g(n) = O(f(n))$
when there exist 
$c>0$ and $K>0$ such that for all $n$, 
\[g(n) \le c f(n) + K.\]
$O(g)$ is the set of functions $f$ such that $f(n) = O(g(n))$. 

We write $f(n) = \Theta(g(n))$ when $f(n) = O(g(n))$ and $g(n) = O(f(n))$. This defines an equivalence relation, and we denote by $\Theta(g)$ the equivalence class of $g$.
\end{notation}

\begin{denomination}
A shift $X$ is said to be $(\Theta(g),k)$-block gluing (resp. $(O(g),k)$-block gluing) when it is $(f,k)$-block gluing with $f \in \Theta(g)$ (resp. $O(g)$).
\end{denomination}




\section{Block-gluing of $X_G$ and properties of $G$}\label{sec:properties}

In this section, we analyze which properties of $G$ correspond to block-gluing on $X_G$. Note that if $G$ is not connected, $X_G$ cannot be $(f,k)$-phased block gluing for any $(f,k)$. 
Therefore, for the remainder of the text we assume that $G$ is connected.

\subsection{Distance between walks}

In this section, we prove that for any finite graph $G$, $X_G$ is phased block gluing and its phase is $1$ or $2$ (Proposition~\ref{prop:block-gluing}). 
For simplicity, we use the notation $\gamma_G := \gamma_{X_G}$.

A subset $\mathbb{U} \subset \mathbb{Z}^2$ is said to be \textit{connected} when the corresponding subgraph of $\mathbb{Z}^2$ is connected.

\begin{definition}
A finite set $\mathbb{U} \subset \mathbb Z^2$ is said to be \emph{block-like} when it is connected and for every $k \in \mathbb{Z}$, $\mathbb{U} \cap (\{k\}\times \mathbb Z)$ and $\mathbb{U} \cap (\mathbb Z\times \{k\})$ are intervals.
\end{definition}

When the support is a rectangle, that is, the product of two intervals, the following lemma corresponds to Proposition~\textbf{2.1} in~\cite{CM18}.

\begin{lemma}\label{lem:localglobal}
Every pattern $p$ which is locally admissible for $X_G$ and whose support is block-like is globally admissible.
\end{lemma}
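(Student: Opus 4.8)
The plan is to build a globally admissible configuration by extending a locally admissible block-like pattern $p$ to the whole plane $\mathbb{Z}^2$. Since $p$ is block-like, every column $\mathbb{U} \cap (\{k\}\times\mathbb{Z})$ and every row $\mathbb{U} \cap (\mathbb{Z}\times\{k\})$ is an interval, so $p$ is connected and there is a homomorphism from the subgraph $\mathbb{U}$ of $\mathbb{Z}^2$ to $G$ (this is exactly the characterisation of local admissibility recalled in the remark above). The idea is to enlarge $\mathbb{U}$ one cell at a time, keeping it block-like, until it fills a large square $\llbracket -N,N\rrbracket^2$, and then tile the plane periodically.

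First I would prove a one-step extension claim: if $\mathbb{V}$ is block-like and $\mathbf{w}\notin\mathbb{V}$ is a cell such that $\mathbb{V}\cup\{\mathbf{w}\}$ is still block-like, then any homomorphism $q\colon\mathbb{V}\to G$ extends to $\mathbb{V}\cup\{\mathbf{w}\}$. The key observation is that because $\mathbb{V}$ is block-like and adding $\mathbf{w}$ keeps it block-like, $\mathbf{w}$ has at most two neighbours in $\mathbb{V}$, and if it has two they are of the form $\mathbf{w}+\mathbf{e}_1$-type and $\mathbf{w}+\mathbf{e}_2$-type, i.e. they sit at two corners of a unit square whose fourth corner also lies in $\mathbb{V}$ (otherwise one of the relevant intervals would fail to be an interval). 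Hence those two neighbours carry values that are both adjacent in $G$ to the value at the diagonal corner; but that is not quite enough to force a common neighbour, so instead I would argue that whenever $\mathbf{w}$ has two neighbours in $\mathbb{V}$, the diagonal corner is also in $\mathbb{V}$, and then use that $q$ restricted to that unit square is already a valid homomorphism of the $2\times 2$ block, so the three known values $q(\mathbf{w}+\mathbf{e}_1),q(\mathbf{w}+\mathbf{e}_2),q(\text{corner})$ extend to the fourth — wait, this is circular. The clean route is: choose the enlargement order so that each newly added cell has \emph{exactly one} neighbour already present. One can always do this for a block-like set: grow the set column by column (each column being an interval that one extends cell by cell from a cell adjacent to the previous column), so that every added cell is adjacent to exactly one existing cell, namely its predecessor in the same column or its neighbour in the previous column. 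With exactly one constraint, extension is trivial since $G$ is connected (in fact every vertex of $G$ has a neighbour, as $G$ has at least one edge — or if $G$ is a single looped vertex the statement is immediate).

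Then I would iterate: starting from $p$ on $\mathbb{U}$, I enlarge to a full square $S_N=\llbracket a,b\rrbracket\times\llbracket c,d\rrbracket$ containing $\mathbb{U}$, at each step adding a cell with a unique already-filled neighbour, obtaining a homomorphism $\widetilde{p}\colon S_N\to G$ that restricts to $p$ on $\mathbb{U}$. Finally, to pass from a rectangle to an element of $X_G$, I would use the standard reflection/periodisation trick: reflect $\widetilde p$ across its four sides to get a homomorphism on a $2\times 2$ block of copies of $S_N$, which is then periodic and tiles $\mathbb{Z}^2$ with a configuration $x\in X_G$ in which $p$ appears. (Reflection preserves the homomorphism property because an edge of $\mathbb{Z}^2$ crossing a mirror line connects two cells carrying equal values, and every vertex of $G$ reachable in $X_G$ has a loop or can be reached — more carefully, reflecting a valid coloring of a rectangle across an edge always yields a valid coloring of the doubled rectangle, a routine check.)

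The main obstacle I expect is the combinatorial bookkeeping in the one-step extension: verifying that a block-like set can always be exhausted by an order in which each cell added has exactly one prior neighbour, and carefully handling the geometry so that no added cell ever "closes a square" with two prior neighbours. Once that ordering lemma is in place, the rest — extending the homomorphism greedily and then periodising by reflection — is straightforward. An alternative that avoids the ordering subtlety entirely is to first extend $p$ to a full rectangle using the rectangle case (Proposition~2.1 of~\cite{CM18}, cited as available) applied to the bounding box, but that requires $p$ to already be defined on the whole bounding box, which it is not; so the ordering argument seems genuinely necessary for the block-like generalisation.
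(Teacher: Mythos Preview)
Your proposal contains a genuine gap, and ironically the correct idea is the one you wrote down and then discarded as ``circular''.

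The claim that one can exhaust the gap between a block-like set $\mathbb{U}$ and its bounding rectangle by adding cells each having \emph{exactly one} neighbour already present is false. Take the simplest concave block-like set $\mathbb{U}=\{(0,0),(1,0),(0,1)\}$: to reach the bounding square you must eventually add $(1,1)$, and that cell has both $(1,0)$ and $(0,1)$ as neighbours in $\mathbb{U}$ no matter what order you use. Growing outward first only makes it worse, since $(1,1)$ then acquires even more neighbours. In general every ``concave corner'' of a block-like region is a cell with two neighbours already present, and such corners are unavoidable when filling up to the bounding rectangle.

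The fix is the observation you abandoned. If the new cell $\mathbf{w}$ has two neighbours in the current block-like set, they are necessarily of the form $\mathbf{w}+\mathbf{e}_1$ and $\mathbf{w}+\mathbf{e}_2$ (up to signs), and the block-like property forces the diagonal corner $\mathbf{w}+\mathbf{e}_1+\mathbf{e}_2$ to lie in the set as well. Now simply \emph{assign to $\mathbf{w}$ the value already sitting at that diagonal corner}. This is not circular: writing $a,b,c$ for the values at $\mathbf{w}+\mathbf{e}_1$, $\mathbf{w}+\mathbf{e}_2$, $\mathbf{w}+\mathbf{e}_1+\mathbf{e}_2$ respectively, local admissibility of the existing pattern gives $a\sim c$ and $b\sim c$ in $G$, so $c$ is a legal value at $\mathbf{w}$. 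This is exactly what the paper does: it grows $\mathbb{U}$ to its bounding rectangle by repeatedly adding a cell with two neighbours and copying the diagonal value, then invokes the rectangle case (Proposition~2.1 of \cite{CM18}) to finish. Your reflection argument for the final periodisation is fine, but unnecessary once the rectangle case is available.
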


\begin{proof}
Let us fix a locally admissible pattern $p$ on a block-like support $\mathbb{U}$. We define some configuration $x$ in which $p$ appears. We first set $x_{|\mathbb{U}}=p$.

Because $\mathbb{U}$ is block-like, a point $(i,j) \notin \mathbb{U}$ cannot have more than two neighbors in $\mathbb{U}$. Let us assume that there is some $(i,j) \notin \mathbb{U}$ which has two neighbors in $\mathbb{U}$. 
These neighbors are respectively 
of the form $(i \pm 1, j)$ and $(i,j \pm 1)$,  otherwise 
$\mathbb{U} \cap (\{j\}\times \mathbb Z)$ or $\mathbb{U} \cap (\mathbb Z\times \{j\})$ would not be an interval. Without loss of generality, let us assume that they are $(i + 1, j)$ and $(i,j + 1)$ (the other cases are dealt with similarly). Let us prove that $(i+1,j+1) \in \mathbb{U}$.
Since $\mathbb{U}$ 
is connected, there exists a walk on $\mathbb{U}$ from $(i + 1, j)$ to $(i,j + 1)$. 
Such a walk intersects $\{i\}\times \rrbracket 
{-\infty}, j\llbracket$ or $\{i+1\}\times \rrbracket j, +\infty\llbracket$ (see for instance Figure~\ref{figure.intersection}).

\begin{figure}
\begin{center}
    \begin{tikzpicture}[scale=0.3]
    \fill[pattern=north west lines] (0,0) rectangle (1,1);
    \fill[color=gray!60] (0,1) rectangle (1,2);
    \fill[color=gray!60] (1,0) rectangle (2,1);
    \draw (0,0) grid (2,2);
    \draw[fill=gray!25] (2,0) rectangle (3,1);
    \draw[fill=gray!25] (3,0) rectangle (4,1);
    \draw[fill=gray!25] (3,1) rectangle (4,2);
    \draw[fill=gray!25] (3,2) rectangle (4,3);
    \draw[fill=gray!25] (3,3) rectangle (4,4);
    \draw[fill=gray!25] (2,3) rectangle (3,4);
    \draw[fill=gray!25] (1,3) rectangle (2,4);
    \draw[fill=gray!25] (0,3) rectangle (1,4);
    \draw[fill=gray!25] (0,2) rectangle (1,3);

    \begin{scope}[xshift=-10cm]
    \fill[pattern=north west lines] (0,0) rectangle (1,1);
    \fill[color=gray!60] (0,1) rectangle (1,2);
    \fill[color=gray!60] (1,0) rectangle (2,1);
    \draw (0,0) grid (2,2);
    \draw[fill=gray!25] (2,0) rectangle (1,-1);
    \draw[fill=gray!25] (1,-1) rectangle (2,-2);
    \draw[fill=gray!25] (0,-2) rectangle (1,-1);
    \draw[fill=gray!25] (-1,-2) rectangle (0,-1);
    \draw[fill=gray!25] (-2,-2) rectangle (-1,-1);
    \draw[fill=gray!25] (-2,-1) rectangle (-1,0);
    \draw[fill=gray!25] (-2,0) rectangle (-1,1);
    \draw[fill=gray!25] (-2,1) rectangle (-1,2);
    \draw[fill=gray!25] (-1,1) rectangle (0,2);
    
    \end{scope}

    \begin{scope}[xshift = 10cm]
        \draw (0,3) rectangle (1,4);
        \node[scale=0.7] at (4,3.5) {$\boldsymbol{(i+1,j+1)}$};
        \filldraw[fill=gray!60] (0,1.5) rectangle (1,2.5);
        \node[scale=0.7] at (5.5,2) {$\boldsymbol{(i+1,j)}$ or $\boldsymbol{(i,j+1)}$};
        \filldraw[pattern=north west lines] (0,0) rectangle (1,1);
        \node[scale=0.7] at (2.5,.5) {$\boldsymbol{(i,j)}$};
        \filldraw[fill=gray!25] (0,-1.5) rectangle (1,-.5);
        \node[scale=0.7] at (5,-1) {other elements of $\mathbb U$};
    \end{scope}
    \end{tikzpicture}
\end{center}
\caption{Illustration of the proof of Lemma~\ref{lem:localglobal}: two possible paths from $(i+1,j)$ to $(i,j+1)$ in $\mathbb{U}$. Only the one on the right can be contained in $\mathbb{U}$.\label{figure.intersection}}
\end{figure}

Since $\mathbb{U}$ is block-like and $(i,j) 
\notin \mathbb{U}$, it can't intersect the first set and thus intersects the second. Again, since $\mathbb{U}$ is block-like, we have $(i+1,j+1) \in \mathbb{U}$.

Every block-like subset of $\mathbb{Z}^2$ such that no $(i,j) \notin \mathbb{U}$ has two neighbors in $\mathbb{U}$ is a rectangle. Indeed, for every $j \in \mathbb{Z}$ such that the columns $\mathbb{Z} \times \{j\}$ and $\mathbb{Z} \times \{j+1\}$ intersect $\mathbb{U}$ non-trivially, 
their respective intersections with $\mathbb{U}$ are equal. A similar statement is 
satisfied for rows. This implies that $\mathbb{U}$ is a rectangle. 

Let us consider the minimal rectangle $R$ which contains the set $\mathbb{U}$. There 
exists a sequence $(\textbf{v}_l)_{1 \le l \le m}$ of elements of $\mathbb{Z}^2$ such that, denoting
\[\forall l \in \llbracket 0,m\rrbracket,\  \mathbb{U}_l := \mathbb{U} \cup \{\textbf{v}_1 , \dots , \textbf{v}_l\},\] we have $\mathbb{U}_m = R$ and, for all $l <m$, $\textbf{v}_{l+1}$ is not in $\mathbb{U}_l$ but has exactly two neighbors in $\mathbb{U}_l$. 

\begin{figure}
\begin{center}
    \begin{tikzpicture}[scale = 0.5]
    \foreach \x/\y in {0/1, 0/2, 0/3, 1/0, 1/1, 1/2, 1/3, 1/4, 2/0, 2/1, 2/2, 2/3, 2/4, 3/0, 3/1, 3/2, 4/0}
    {
        \filldraw[fill = gray!25](\x, \y) rectangle ++(1,1);
    }

    \node[scale=0.8] at (3.625, 3.5) {$v_{l+1}$};
    \node[scale=0.8] at (2.5, 2.5) {$w_l$};
    \end{tikzpicture}
    \end{center}
    \caption{\label{fig:growrectangle}A possible choice for $v_{l+1}$ and the corresponding $w_l$.}
\end{figure}
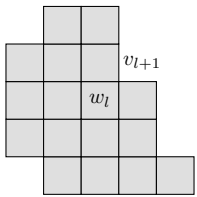

For all $l<m$, $\mathbb{U}_l$ is block-like, and therefore  there is a an element $\textbf{w}_l \in \mathbb{U}_l$ which is neighbor of the two neighbors of $\textbf{v}_{l+1}$ in $\mathbb{U}_l$ (see Figure~\ref{fig:growrectangle}). We set $x_{\textbf{v}_{l+1}} := x_{\textbf{w}_l}$. 

The defined pattern $x_{|R}$ is locally admissible on a support $R$, which is a rectangle, so it is globally admissible by Proposition~\textbf{2.1} in~\cite{CM18}. Hence $p$ is globally admissible. 
\end{proof}

The following characterization is well-known:

\begin{lemma}\label{lem:bipartite}
	A graph $H$ is bipartite if and only if it has no cycle of odd length. 
\end{lemma}

\begin{lemma}\label{lem:mixingbipartite}
Let $H$ be a finite graph. For every $u,v \in V_H$ and $k\geq \textrm{diam}(H)$, there is a walk from $u$ to $v$ of length $k$ or $k+1$. If $H$ is not bipartite, then for all $k\geq 3\textrm{diam}(H)$, there is a walk  from $u$ to $v$ of length $k$.
\end{lemma}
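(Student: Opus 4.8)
The plan is to prove the two assertions separately, both by elementary walk-surgery arguments that exploit the diameter bound together with, in the non-bipartite case, the existence of an odd cycle.

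For the first assertion, fix $u,v\in V_H$ and $k\ge\operatorname{diam}(H)$. I would start from a shortest walk $w$ from $u$ to $v$, whose length $\ell=\delta(u,v)\le\operatorname{diam}(H)\le k$. The deficit $k-\ell$ must be absorbed without changing the endpoints or the parity-of-freedom too much. Since $H$ is connected it has at least one edge $\{a,b\}$; inserting the detour $a\,b\,a$ at a vertex $a$ on the walk lengthens it by $2$. So by repeatedly inserting such spines (at the vertex $u$, say, using any neighbour of $u$), I can increase the length of a walk from $u$ to $v$ by any even amount. Hence from a walk of length $\ell$ I can produce walks of every length $\ell,\ell+2,\ell+4,\dots$; if $k\equiv\ell\pmod 2$ this already gives a walk of length exactly $k$, and otherwise it gives one of length $k+1$ (note $\ell+ (k+1-\ell)$ is then an even increment since $k+1\equiv\ell$). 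Either way there is a walk of length $k$ or $k+1$, as claimed. The only mild subtlety is making sure $\ell\le k$ or $\ell\le k+1$ so that the required increment is a nonnegative even number, which is exactly what $k\ge\operatorname{diam}(H)$ guarantees.

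For the second assertion, suppose $H$ is not bipartite. By Lemma~\ref{lem:bipartite} it contains an odd cycle $c$; let $2m+1=l(c)$ be its (odd) length and pick a vertex $a$ on $c$. Traversing $c$ once, inserted at an occurrence of $a$ in a walk, changes the walk's length by the odd number $2m+1$ while preserving endpoints. Combining this with the even-increment moves from the first part, I can adjust the length of a walk from $u$ to $v$ by $2m+1$ (odd) or by $2$ (even), hence by any integer that is at least some bounded amount. Concretely: start from a shortest walk of length $\ell\le\operatorname{diam}(H)$ from $u$ to $a$, go around $c$ if needed to fix parity, then walk from $a$ to $v$ along a shortest walk of length $\le\operatorname{diam}(H)$, and finally pad with spines $a'b'a'$ to reach the exact target length. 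A length of at most $\operatorname{diam}(H)+(2m+1)+\operatorname{diam}(H)$ suffices to realize both parities, and since $2m+1\le\operatorname{diam}(H)$ is false in general I should instead bound $2m+1$ by, say, $2\operatorname{diam}(H)+1$ using that a shortest odd cycle has length at most $2\operatorname{diam}(H)+1$; thus any $k$ at least $3\operatorname{diam}(H)$ (a convenient round bound dominating $\operatorname{diam}(H)+2\operatorname{diam}(H)+1$ up to the parity slack) is attainable exactly.

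The main obstacle, such as it is, is purely bookkeeping: one must be careful that the odd cycle can be inserted at a vertex that actually occurs on the walk under construction — which is why I route the walk through a fixed vertex $a$ of the odd cycle — and that the residual length to be filled by spines is nonnegative and of the correct parity after the (at most one) insertion of $c$. Getting the constant $3\operatorname{diam}(H)$ to genuinely dominate $2\operatorname{diam}(H)+(2m+1)$ requires the standard fact that the odd girth of a non-bipartite graph is at most $2\operatorname{diam}(H)+1$, which follows by taking a shortest odd closed walk and noting any chord would split it into a shorter odd closed walk. None of these steps is deep; the argument is a routine "adjust the length of a walk by inserting backtracks and an odd cycle" manipulation.
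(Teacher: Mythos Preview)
Your first assertion is fine and identical to the paper's argument: shortest walk plus spines.

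For the second assertion your strategy---route through a vertex $a$ on an odd cycle, insert the cycle once to flip parity, then pad with spines---is the same as the paper's. But your bookkeeping does not deliver the constant $3\operatorname{diam}(H)$. You correctly bound the odd girth by $2m+1\le 2\operatorname{diam}(H)+1$, so the constructed walk $u\to a\to (\text{around }c)\to a\to v$ has length at most
\[
\operatorname{diam}(H)+(2m+1)+\operatorname{diam}(H)\ \le\ 4\operatorname{diam}(H)+1,
\]
not $3\operatorname{diam}(H)+1$ as you then assert. One copy of $\operatorname{diam}(H)$ silently vanished between the two displayed lines; the phrase ``a convenient round bound dominating $\operatorname{diam}(H)+2\operatorname{diam}(H)+1$'' does not match your own earlier expression $\operatorname{diam}(H)+(2m+1)+\operatorname{diam}(H)$. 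As written, your argument only yields the weaker threshold $k\ge 4\operatorname{diam}(H)+1$.

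The fix is to anchor the odd detour at $v$ (or $u$) rather than at an arbitrary vertex of a shortest odd cycle. The same chord argument you sketch for the odd girth, applied instead to a shortest odd \emph{closed walk through $v$}, shows that this closed walk has length at most $2\operatorname{diam}(H)+1$: split it at its midpoint, replace the longer arc by a geodesic of length $\le\operatorname{diam}(H)$, and observe that one of the two resulting closed walks at $v$ is odd and no longer. Then the wrong-parity walk $u\to v$ followed by this odd loop has length at most $\operatorname{diam}(H)+(2\operatorname{diam}(H)+1)=3\operatorname{diam}(H)+1$, and your ``parity slack'' remark correctly absorbs the $+1$. (The paper's own proof is, incidentally, equally loose here: it asserts $l(p\odot c\odot q)\le k$ without bounding $l(c)$ at all.)
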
 

\begin{proof}
By definition of the diameter, there is a walk $p$ from $u$ to $v$ whose length is at most $\textrm{diam}(H)$. For all spine $t$ on $v$, $p \odot t$ has length $l(p)+2$ and also begins at $u$ and ends on $v$. The first claim follows.

Let us assume that $H$ is not bipartite. 
This implies that $H$ contains a cycle $c$ of odd length (see Lemma~\ref{lem:bipartite}). Let $p$ and $q$ be the shortest walk from $u$ to $c_0$ and from $c_0$ to $v$, respectively. The respective lengths of walks $p \odot q$ and $p \odot c \odot q$ have different parities and are both smaller than $k$. Let us denote $r$ the one which has the same parity as $k$. Then for any spine $t$ on $v$, the cycle $r \odot t^{(k-l(r))/2}$ 
has length $k$ and is from $u$ to $v$. \end{proof}

\begin{lemma}
For all $n \ge 1$, the graph $\Delta_G^n$ is bipartite if and only if $G$ is bipartite. 
\end{lemma}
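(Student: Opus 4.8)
The plan is to establish the two directions of the equivalence by relating odd cycles in $\Delta_G^n$ to odd cycles in $G$, via Lemma~\ref{lem:bipartite}. Throughout, I use the fact that $\Delta_G^n$ has as vertices the walks $L_G^n$, with $(p,q)$ an edge exactly when $p,q$ can appear as adjacent columns (or rows) in an element of $X_G$; in particular, by Remark~\ref{remark.shift.neighbors}, any walk and its left or right shift are adjacent in $\Delta_G^n$.

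First I would prove that if $G$ is not bipartite then $\Delta_G^n$ is not bipartite, by exhibiting an explicit odd cycle in $\Delta_G^n$. Since $G$ is not bipartite it contains a simple cycle $c$ of odd length $\ell$ (Lemma~\ref{lem:bipartite}). Fix any walk $p = p_0 \hdots p_n$ in $L_G^n$ whose first vertex $p_0$ lies on $c$; such a walk exists because $G$ is connected (take a walk to $c_0$ and, if needed, pad it with spines to reach length $n$ — or more simply, start from the constant-ish walk near $c$). Now consider the sequence of walks obtained by repeatedly taking a right shift that "feeds in" the successive vertices of $c$: informally, starting from $p$, prepend $c_1$ (a neighbor of $p_0=c_0$) to get a right shift, then prepend $c_2$, and so on around $c$. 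After $\ell$ such steps we return to the original walk $p$, because $c$ is a closed walk of length $\ell$. This gives a cycle of length $\ell$ in $\Delta_G^n$ (each step is an edge by Remark~\ref{remark.shift.neighbors}), and $\ell$ is odd, so $\Delta_G^n$ has an odd cycle and is therefore not bipartite. The one subtlety to check carefully is that the intermediate walks really are walks on $G$ (they are, since consecutive prepended vertices are consecutive vertices of the cycle $c$, hence adjacent in $G$) and that the construction genuinely closes up after exactly $\ell$ steps.

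Conversely, I would show that if $G$ is bipartite then $\Delta_G^n$ is bipartite, by constructing an explicit $2$-coloring of $L_G^n$. Let $V_G = A \sqcup B$ be a bipartition of $G$. For a walk $p = p_0 \hdots p_n$, define its color by, say, the parity of the number of indices $i$ with $p_i \in A$ — equivalently, since $G$ is bipartite the parity of $i$ determines which side $p_i$ lies on relative to $p_0$, so this is just determined by $n$ and the side of $p_0$; a cleaner invariant is $\chi(p) := (\text{side of } p_0) \in \{0,1\}$ when $n$ is even, but for general $n$ the right invariant is the pair determined by the sides of $p_0$ and $p_n$. The key claim is that if $(p,q)$ is an edge of $\Delta_G^n$, then $\chi(p) \ne \chi(q)$. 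To see this, note that $p$ and $q$ occurring as adjacent columns in some $z \in X_G$ means $(p_i, q_i) \in E_G$ for every $i$; since $G$ is bipartite, $p_i$ and $q_i$ lie on opposite sides for every $i$, so in particular $p_0$ and $q_0$ lie on opposite sides, forcing $\chi(p) \ne \chi(q)$. Hence $\chi$ is a proper $2$-coloring and $\Delta_G^n$ is bipartite.

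The main obstacle, such as it is, lies in the first direction: making the "feed in the vertices of $c$" construction precise as a bona fide closed walk in $\Delta_G^n$ of odd length, and confirming that a suitable starting walk $p \in L_G^n$ with $p_0$ on $c$ exists for every $n \ge 1$ (using connectedness of $G$ and padding by spines as in Lemma~\ref{lem:mixingbipartite}). The bipartite direction is essentially bookkeeping once one observes that adjacency in $\Delta_G^n$ forces coordinatewise adjacency in $G$, and then invokes the bipartition of $G$ coordinatewise.
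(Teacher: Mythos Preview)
Your second direction (bipartite $G \Rightarrow$ bipartite $\Delta_G^n$) is correct and in fact cleaner than the paper's argument, which proceeds by contrapositive (project an odd cycle of $\Delta_G^n$ to $G$ via the last letter). Your observation that an edge $(p,q)$ of $\Delta_G^n$ forces $(p_i,q_i)\in E_G$ for every $i$, and hence that the coloring $\chi(p):=\text{side of }p_0$ is proper, is exactly right.

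The first direction, however, has a genuine gap. You claim that starting from an arbitrary $p\in L_G^n$ with $p_0=c_0$ and successively right-shifting by prepending $c_1,c_2,\dots$ returns to $p$ after $\ell$ steps. This is false: after $k$ such shifts the walk is $c_k c_{k-1}\cdots c_1 c_0\, p_1\cdots p_{n-k}$ (indices of $c$ read modulo $\ell$), so after $\ell$ steps you obtain $c_0 c_{\ell-1}\cdots c_1 c_0\, p_1\cdots p_{n-\ell}$, which is not $p$ unless $p$ already had this cyclic form. Your sequence does become $\ell$-periodic \emph{eventually} (once the original letters of $p$ have been pushed off), but not from the start; as written you have not produced a closed odd walk in $\Delta_G^n$.

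The fix is easy and stays within your approach: choose the starting walk to already lie on $c$, namely $p_i:=c_{-i\bmod \ell}$. Then each right shift by the next vertex of $c$ sends $p^{(k)}$ with $(p^{(k)})_i=c_{(k-i)\bmod \ell}$ to $p^{(k+1)}$, and $p^{(\ell)}=p^{(0)}$ on the nose, giving an odd cycle of length $\ell$ in $\Delta_G^n$. The paper's construction is a different concrete choice: it takes $p^{(j)}$ to be the walk that alternates between $c_j$ and $c_{j+1}$, and checks directly that $p^{(j)}$ and $p^{(j+1)}$ are coordinatewise $G$-adjacent (not via shifts). Both work; the point is simply that you must pick the base walk $p$ with care, not arbitrarily.
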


\begin{proof}
If $\Delta_G^n$ is not bipartite, it has a cycle of odd length. Along this cycle, if we take the last letter of the walk on $G$ corresponding to each vertex, we get a cycle in $G$ of odd length. 
Reciprocally, assume $G$ has a cycle $c$ of odd length $m$. For all $j$ between $0$ and $m-1$,
we denote by $p^{(j)}$ the walk on $G$ which begins at $c_j$ and alternates between $c_j$ 
and $c_{j+1}$. Then $p^{(0)} \ldots p^{(m-1)} p^{(0)}$ is a cycle of odd length on $\Delta_G^n$. 
\end{proof}

\begin{proposition}\label{prop:block-gluing}
Let $d_G (n) := \textrm{diam}(\Delta_G^n)$.
The shift $X_G$ is $(\Theta(d_G),2)$-phased-block-gluing. When $G$ is not bipartite, it is 
also $\Theta(d_G)$-block-gluing.
\end{proposition}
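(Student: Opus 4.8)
\section*{Proof proposal}

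The plan is to prove both statements at once by exhibiting, for each finite $G$, a function $g=O(d_G)$ such that $X_G$ is $(g,2)$-phased block gluing, and $(g,1)$-block gluing when $G$ is not bipartite. This suffices: $\max(g,d_G)\in\Theta(d_G)$, and enlarging the gap function preserves the gluing property, so $X_G$ is then $(\Theta(d_G),2)$-phased block gluing (resp.\ $\Theta(d_G)$-block gluing).

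First I would set up the dictionary with the graphs $\Delta_G^m$. By Lemma~\ref{lem:localglobal} an $n\times n$ block pattern is globally admissible iff it is locally admissible, and being locally admissible means precisely (again by Lemma~\ref{lem:localglobal}, now applied to $2\times n$ strips) that its $n$ columns, each a vertex of $\Delta_G^{n-1}$ up to the transposition symmetry of $X_G$, form a walk of length $n-1$ in $\Delta_G^{n-1}$. Since $X_G$ is invariant under all isometries of $\mathbb Z^2$, I assume the two boxes $p,p'$ to be glued are separated in the horizontal coordinate (the one with the larger of the two component gaps, so that this gap equals $\delta$ and is $\ge f(n)$), with $p'$ to the right of $p$; I write $p$ on rows $\llbracket 0,n-1\rrbracket$ and $p'$ on rows $\llbracket r,r+n-1\rrbracket$, with $r$ arbitrary. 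I also record the elementary bound $d_G(n)\le n+\textrm{diam}(G)$ — obtained by reading a sequence of left-shifts, which are edges of $\Delta_G^n$ by Remark~\ref{remark.shift.neighbors}, as a path in $\Delta_G^n$ — and the monotonicity $d_G(n)\le d_G(n+1)$ (truncation $\Delta_G^{n+1}\to\Delta_G^n$ is an onto, distance-nonincreasing graph homomorphism).

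The construction fixes an edge $\{a,b\}$ of $G$ and the checkerboard configuration $\chi\in X_G$ with $\chi_{(i,j)}=a$ if $i+j$ is even, $b$ otherwise (two of these when $G$ is bipartite, one per parity class). Stage one, carried out inside the row-window $\llbracket 0,n-1\rrbracket$ of $p$: connect the rightmost column of $p$ to a checkerboard column inside $\Delta_G^{n-1}$, at cost $\le d_G(n-1)$ columns, the exact column count being reached using that the two checkerboard columns are adjacent in $\Delta_G^{n-1}$ (to adjust the length by $\pm 2$) and, in the non-bipartite case, Lemma~\ref{lem:mixingbipartite} applied to $\Delta_G^{n-1}$ — which is non-bipartite exactly because $G$ is, by the lemma preceding this proposition — to realize every sufficiently large length; do the mirror image on the left of $p'$. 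After stage one, $p$ has been extended to a height-$n$ strip equal to $\chi$ beyond $\le d_G(n-1)$ columns on each side, whose non-checkerboard part has width $\le n+2d_G(n-1)=O(n)$. Stage two: transpose and extend this strip vertically until it is $\chi$ outside $\le d_G(O(n))$ further rows. The part already equal to $\chi$ extends to all of $\mathbb Z^2$, and the construction only ever modifies a box of side $O(d_G(n))$ around $p$ (a large vertical offset $r$ enters only as empty space filled with $\chi$). Overlaying the extensions of $p$ and $p'$ onto a common $\chi$ then gives an element of $X_G$ containing both, as soon as $\delta$ exceeds twice the halo radius plus a constant, i.e.\ $\delta=\Omega(d_G(n))$. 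When $G$ is bipartite the parity classes of $p$, of $p'$ at its position, and of $\chi$ need not be compatible, and one first translates $p'$ by a suitable $\textbf{v}$ with $\|\textbf{v}\|_\infty=1$ to make them so — the unique reason a phase of $2$ is needed; when $G$ is not bipartite there is no parity class, $\textbf{v}=0$ works, and the phase is $1$. Taking $g(n):=2\,d_G(O(n))+O(1)=O(d_G(n))$ and passing to $\max(g,d_G)$ finishes the proof.

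The step I expect to be the main obstacle is making stage two honest, and in particular the corner regions of the halo: transitioning the bi-infinite boundary line of a half-built strip to a genuine checkerboard line while leaving the parts already equal to $\chi$ untouched is a version of the $\Delta_G^m$-diameter problem in which the two ends of a width-$O(n)$ window are pinned to checkerboard values, and one must check that this constrained distance is still $O(d_G(m))$; relatedly, one needs the mild stability $d_G(Cn)=O(d_G(n))$ for each constant $C$ (so that stage two costs $O(d_G(n))$ rather than merely $O(n)$ via the crude bound). Both of these require a short, separate argument in the spirit of the proof of Lemma~\ref{lem:localglobal}; the conceptual content — the translation into $\Delta_G$ and the use of $\chi$ as a universal background — is as above, and the remaining work is bookkeeping.
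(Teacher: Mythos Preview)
Your checkerboard-background approach is a genuinely different route from the paper's, and the two issues you flag at the end are real obstructions, not bookkeeping.

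The essential one is the pinned-endpoint problem. For the stage-two rows to glue onto the infinite $\chi$-background, every intermediate row in your $\Delta_G^{m-1}$-path must have its first and last letters equal to the alternating $\{a,b\}$ value dictated by $\chi$ at that height; otherwise the local rules fail at the vertical seams of the halo and you have not produced an element of $X_G$. You are therefore asking not for $\textrm{diam}(\Delta_G^{m-1})$ but for the diameter of the subgraph of $\Delta_G^{m-1}$ on walks with prescribed (alternating) endpoints, and nothing in the paper gives a bound of the form $O(d_G(m))$ for that quantity. This is not in the spirit of Lemma~\ref{lem:localglobal}: that lemma lets you extend a valid pattern outward, but here you need to \emph{constrain} a $\Delta_G$-geodesic, which is a different kind of statement. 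The stability claim $d_G(Cn)=O(d_G(n))$ is likewise unproven here; the Lipschitz bound of Lemma~\ref{lemma.even} only yields $d_G(Cn)\le d_G(n)+O(n)$, which is useless when $d_G$ is sublinear, and invoking the later trichotomy would be circular.

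The paper sidesteps both issues with a much shorter construction. It extends $p$ to a full configuration $x\in X_G$ once and for all via Lemma~\ref{lem:localglobal}, and then works on a single L-shaped block-like set $\mathbb{V}$: the width-$n$ column over $p$ (taken from $x$) continued up to the height of $p'$, joined to a height-$n$ horizontal strip reaching across to $p'$. Filling the horizontal strip needs one walk of prescribed length $k$ in $\Delta_G^{n}$ between the relevant column of $x$ and the leftmost column of $p'$, which Lemma~\ref{lem:mixingbipartite} supplies for $k\ge 3\,d_G(n)$ (or for $k$ or $k+1$ in the bipartite case, giving the phase $2$). One more application of Lemma~\ref{lem:localglobal} to $\mathbb{V}$ finishes. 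There is no background configuration, no corner region, and no iterated diameter: the argument touches $d_G(n)$ exactly once.
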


This implies that $\gamma_{X_G} \in \Theta(d_G)$, generalizing Proposition~\textbf{4.1} in \cite{CM18}. 

\begin{proof}
Let us first prove the second claim. We assume that $G$ is not bipartite. We show that $X_G$ is $O(d_G)$-block gluing. Consider some $k\geq 3d_G(n)$ and let $p$ and $q$ be two locally admissible block patterns of size $n$ and $\textbf{u},\textbf{v}$ such that 
\[\delta\left(\textbf{u}+\llbracket 0, n-1\rrbracket ^2,\textbf{v}+\llbracket 0, n-1\rrbracket ^2\right) = k.\]

By Lemma~\ref{lem:localglobal}, there exist $x, y \in X_G$ such that $x_{\textbf{u}+\llbracket 0, n-1\rrbracket^2} = p$ and  $y_{\textbf{v}+\llbracket 0, n-1\rrbracket^2} = q$. Without loss of generality, we can assume that $\textbf{v}_1 \ge \textbf{u}_1$ and $\textbf{v}_2 \ge \textbf{u}_2$, as well as $\textbf{v}_1 - \textbf{u}_1 = n + k-1$.

We apply Lemma~\ref{lem:mixingbipartite} on the graph $H=\Delta_G^n$ and obtain that there exists a walk of length exactly $k$ from $x_{(\textbf{u}_1,\textbf{v}_2)+\{n-1\}\times \llbracket 0,n-1 \rrbracket}$ to $y_{\textbf{v} + \{0\}\times \llbracket 0,n-1 \rrbracket}$. This corresponds to a locally admissible pattern $p'$ on support $\llbracket 0,k \rrbracket \times \llbracket 0,n-1 \rrbracket$.
Let us denote by $\mathbb{V}$ the following set: 
\[
\mathbb{V} = \left (\mathbf{u}+\llbracket 0, n-1\rrbracket \times \llbracket 0, n-1+ \textbf{v}_2 - \textbf{u}_2 \rrbracket
\right) \bigcup \left (\mathbf{v}+\llbracket \textbf{u}_1 - \textbf{v}_1, n-1\rrbracket \times \llbracket 0, n-1 \rrbracket
\right)\]

\begin{figure}[h!]
\begin{center}
\begin{tikzpicture}[scale=0.3]
\draw[dashed, fill=gray!40] (0,0) -- (0,5) -- (11,5) -- (11,2) -- (3,2) -- (3,0) 
-- (0,0);

\node at (-0.5,-0.5) {\textbf{u}};
\draw (0,0) rectangle (3,3);

\node at (7.5,1.5) {\textbf{v}};
\draw (8,2) rectangle (11,5);
\node at (1.5,1.5) {p};
\node at (9.5,3.5) {q};
\draw[latex-latex] (3,-1) -- (8,-1);
\draw[latex-latex] (12,2) -- (12,5);
\node at (13,3.5) {n};
\node at (5.5,-2) {k};

\node at (5.5,6) {$\mathbb{V}$};

\begin{scope}[xshift=17.5cm]
\draw[dashed, fill=gray!40] (0,0) -- (0,9) -- (11,9) -- (11,6) -- (3,6) -- (3,0) 
-- (0,0);

\node at (-0.5,-0.5) {\textbf{u}};
\draw (0,0) rectangle (3,3);

\begin{scope}[yshift=4cm]
\node at (7.5,1.5) {\textbf{v}};
\draw (8,2) rectangle (11,5);
\node at (9.5,3.5) {q};
\draw[latex-latex] (12,2) -- (12,5);
\node at (13,3.5) {n};
\node at (5.5,6) {$\mathbb{V}$};
\end{scope}

\node at (1.5,1.5) {p};
\draw[latex-latex] (3,-1) -- (8,-1);

\node at (5.5,-2) {k};

\end{scope}

\end{tikzpicture}
\end{center}
\caption{Illustration of the definition of $\mathbb{V}$ in two different situations.\label{figure.v}}
\end{figure}
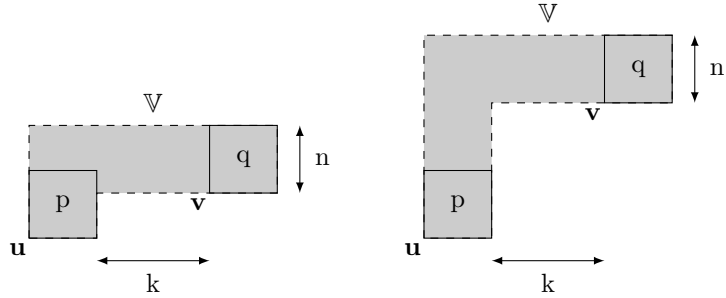

One can find an illustration of $\mathbb{V}$ 
on Figure~\ref{figure.v}. We define a locally admissible pattern $w$ on $\mathbb{V}$ 
by setting  
\[
w|_{(\textbf{u}_1+n-1,\textbf{v}_2) + (\llbracket 0,k \rrbracket \times \llbracket 0, n-1\rrbracket)} = p',
\]\[
 w|_{\textbf{v}+\llbracket 0, n-1\rrbracket^2} = q
\
\]
and such that $w$ coincides with $x$ on the remainder of $\mathbb{V}$.

By definition of $p'$, $w$ is well defined where the above three rectangles intersect.
Since $\mathbb{V}$ is a block-like set, by Lemma~\ref{lem:localglobal}, $w$ is globally admissible. This proves 
that $X_G$ is $O(d_G)$-block gluing, meaning that $\gamma_G = O(d_G)$. 
By the definition $\gamma_G (n) \geq d_G(n)$, so we obtain that $d_G = O(\gamma_G)$. 
 
In the case $G$ is bipartite, the proof follows the same lines, except that Lemma~\ref{lem:mixingbipartite} provides a walk of length $k$ or $k+1$, depending on parity.
In the latter case we have to shift $y|_\mathbf{v}$ by one column to the right (modifying $\mathbb{V}$ accordingly). We obtain this way that $X_G$ is $(O(d_G),2)$-phased block gluing. 
\end{proof}

\subsection{From walks to cycles}

First, we prove that we only need some values to determine the class of the function $\gamma_G$.

\begin{lemma}\label{lemma.even}
For all $n$, we have $\gamma_G(n) \le\gamma_G(n+1) \le\gamma_G(n)+2$. In particular, the equivalence class $\Theta(\gamma_G)$ is the same as the equivalence class of the function $n \mapsto \gamma_G(k\lfloor n/k \rfloor)$ for any $k>0$. 
\end{lemma}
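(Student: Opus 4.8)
The plan is to establish the two-sided inequality $\gamma_G(n) \le \gamma_G(n+1) \le \gamma_G(n)+2$ first, and then derive the consequence about equivalence classes as a routine estimate. For the left inequality $\gamma_G(n)\le\gamma_G(n+1)$, I would use the fact (from Proposition~\ref{prop:block-gluing}) that $\gamma_G \in \Theta(d_G)$ where $d_G(n) = \mathrm{diam}(\Delta_G^n)$, so it suffices to understand how the diameters of the graphs $\Delta_G^n$ vary with $n$; alternatively one can argue directly from the definition of phased block gluing. The cleanest route is probably the direct one: given two globally admissible block patterns $p, p'$ of size $n$, I would extend each of them to a globally admissible block pattern of size $n+1$ (this uses Lemma~\ref{lem:localglobal}: append an extra row and column obtained by copying the last row/column, which keeps local admissibility on the block-like — in fact rectangular — support), glue these size-$(n+1)$ patterns using $\gamma_G(n+1)$, and observe that the restriction of the resulting configuration witnesses the gluing of $p$ and $p'$ at distance $\gamma_G(n+1)$ (distances only decrease when we shrink the patterns, and the phase does not increase). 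Hence $X_G$ is $(\gamma_G(n+1), \theta_G)$-phased block gluing at size $n$, giving $\gamma_G(n) \le \gamma_G(n+1)$.

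For the right inequality $\gamma_G(n+1) \le \gamma_G(n)+2$, I would work through $d_G$: I claim $d_G(n+1) \le d_G(n) + 2$, i.e. $\mathrm{diam}(\Delta_G^{n+1}) \le \mathrm{diam}(\Delta_G^n) + 2$. Given two walks $p, q \in L_G^{n+1}$, their truncations $\bar p = p_0\ldots p_n$ and $\bar q = q_0 \ldots q_n$ lie in $L_G^n$, and a $\Delta_G^n$-path of length $\le d_G(n)$ from $\bar p$ to $\bar q$ lifts to a path in $\Delta_G^{n+1}$ by appending a suitable last letter at each step — this is where I must be careful, since not every way of appending a letter to a $\Delta_G^n$-edge yields a $\Delta_G^{n+1}$-edge. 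The idea is to first move (in at most one $\Delta_G^{n+1}$-step, using Remark~\ref{remark.shift.neighbors} about shifts, or a spine trick) so that $p$'s last letter is "harmless", carry the $\Delta_G^n$-path through while keeping the $(n+1)$-st coordinate constant (or following an admissible vertical strip), then adjust the last letter to match $q$ in at most one final step; the two extra steps account for the $+2$. Then $\gamma_G(n+1) = \Theta(d_G(n+1)) \le \Theta(d_G(n)+2) = \Theta(\gamma_G(n)+2)$, but to get the clean constant $+2$ rather than just a $\Theta$-bound one should redo the gluing argument of Proposition~\ref{prop:block-gluing} directly at size $n+1$, re-using a gluing at size $n$ plus two rows/columns of padding.

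The main obstacle is the right inequality: controlling how $\gamma_G$ (equivalently $d_G$) behaves when the pattern size increases by one, because appending a row/column to a walk is not a graph morphism $\Delta_G^n \to \Delta_G^{n+1}$ in an obvious functorial way, and one must produce an explicit short sequence of $\Delta_G^{n+1}$-moves realizing the transition. I expect the slickest proof avoids $d_G$ entirely and argues with patterns: to glue $p', q'$ of size $n+1$ at distance $\gamma_G(n) + 2$, restrict them to their top-left $n \times n$ subpatterns $p, q$, glue those at distance $\gamma_G(n)$ into some $z \in X_G$, then use Lemma~\ref{lem:localglobal} on a suitable block-like enlargement to fill in the remaining L-shaped border of each of $p'$ and $q'$ inside (a shift of) $z$; the extra border costs one unit of distance on each side, and the phase is unchanged.

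Finally, for the last sentence of the lemma: from $\gamma_G(n) \le \gamma_G(n+1) \le \gamma_G(n)+2$ one gets by induction, for any $m \ge n$, $\gamma_G(n) \le \gamma_G(m) \le \gamma_G(n) + 2(m-n)$. Applying this with $m = n$ and $n$ replaced by $k\lfloor n/k\rfloor$ (so $0 \le n - k\lfloor n/k\rfloor < k$) yields
\[
\gamma_G(k\lfloor n/k\rfloor) \le \gamma_G(n) \le \gamma_G(k\lfloor n/k\rfloor) + 2k,
\]
which shows $\gamma_G(n) = \Theta\bigl(\gamma_G(k\lfloor n/k\rfloor)\bigr)$ (the additive constant $2k$ is absorbed into the $K$ of the $O$-notation, $k$ being fixed), and conversely $\gamma_G(k\lfloor n/k\rfloor) \le \gamma_G(n)$ trivially gives the other direction. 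This is a short, routine computation once the two-sided step inequality is in hand.
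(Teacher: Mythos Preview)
Your overall strategy matches the paper's, but you are vague at precisely the point where the paper is explicit. The left inequality the paper dismisses as ``trivial'' (any size-$n$ block sits inside a size-$(n+1)$ block), and your extension argument is a correct way to spell that out.

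For the right inequality the paper works, as you guess, through $d_G$: given $p,q$ of length $n+1$, take a $\Delta_G^n$-path $p^{(0)},\dots,p^{(m)}$ between the truncations $p_0\cdots p_n$ and $q_0\cdots q_n$, and lift it by $q^{(0)}=p$ and $q^{(i)} := p^{(i)}\,p^{(i-1)}_n$ for $i\ge 1$. This is the step you left unresolved. Your suggestion of ``keeping the $(n+1)$-st coordinate constant'' does \emph{not} work, since $p^{(i)}_n$ varies along the path and its neighbour cannot stay fixed; but your parenthetical ``following an admissible vertical strip'' is exactly the right idea: the strip is $p^{(0)}_n,p^{(1)}_n,\dots$, which is a walk in $G$ because consecutive $p^{(i)}$ are $\Delta_G^n$-neighbours, and one appends it \emph{shifted by one step}. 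After the lift, $q^{(m)}$ agrees with $q$ except in its last letter, and two shift moves fix that. No initial adjustment step is needed, so your worry about only obtaining a $\Theta$-bound, and the whole detour through a pattern-level argument with L-shaped borders, are unnecessary.

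Your derivation of the ``In particular'' clause from the two-sided step inequality is correct and is what the paper leaves implicit.
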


\begin{proof}
The inequality $\gamma_G(n) \le\gamma_G(n+1)$ is trivial. Let us prove the second one. Let us consider two walks $p$ and $q$ of length $n+1$. 
There exists $m \le\gamma_G(n)$ and a walk $p^{(0)}, \dots , p^{(m)}$ in $\Delta^{n+1}_G$ from $p^{(0)} = p_0 \hdots p_{n}$ to $p^{(m)} = q_0 \hdots q_{n}$. For all $i \ge 1$, let us set $q^{(i)} = p^{(i)} p^{(i-1)}_n$ and $q^{(0)} = p$. For all $i < m$, $(q^{(i)},q^{(i+1)})$ is an 
edge of $\Delta^{n+1}_G$, and $q^{(0)} = p$. Since $q^{(m)}$ is equal to $q$ except for the last vertex, $q^{(m)} \in \rho_r (\rho_l(q))$. As a consequence $d_G(p,q) \le m+2$, and thus $\gamma_G(n+1) \le\gamma_G(n)+2$.
\end{proof}

In Proposition~\ref{prop:block-gluing}, we related $\gamma_G$ to the diameter of the graph. For bipartite graphs, it is enough to consider the distance between cycles.

\begin{lemma}\label{lemma.walks.to.cycles}
Let us assume that $G$ is bipartite. For every walk $p$ of even length, there exists a cycle $c$ such that $d_G(p, c)\leq \text{diam}(G)+1$.
\end{lemma}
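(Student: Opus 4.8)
The plan is to realize the walk $p$ as one side of a long, thin block‑like pattern whose two ends are glued to each other, so that moving from $p$ to a cycle amounts to a bounded number of moves in $\Delta_G^n$. Write $n = l(p)$ and $p = p_0 \ldots p_n$ with $n$ even. Since $G$ is bipartite, vertices split into two classes by parity of distance, and $p_0, p_n$ lie in the same class (an even walk connects vertices of equal parity). By definition of $\text{diam}(G)$, there is a walk $q$ from $p_n$ back to $p_0$ of length at most $\text{diam}(G)$; since $p_0, p_n$ have the same parity, $l(q)$ is even, hence $l(q) \le \text{diam}(G)$ and $l(q) \equiv 0$, and by appending at most one spine on $p_0$ we may arrange $l(q)$ to be exactly what we need — but more simply, $p \odot q$ is a cycle of length $n + l(q)$.

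The key step is to connect $p$ (as a vertex of $\Delta_G^n$) to a genuine cycle $c$ of the same length $n$. Here I would use repeatedly Remark~\ref{remark.shift.neighbors}: left‑ and right‑shifts of a walk are at distance $1$ in $\Delta_G^n$. Concretely, start from $p = p_0 \ldots p_n$ and perform $l(q)$ successive right shifts, feeding in the letters of $q^{-1}$ (that is, $q$ read from $p_n$ backwards is not quite right — one should feed in, one at a time, the vertices $q_{l(q)-1}, q_{l(q)-2}, \ldots, q_0 = p_0$), obtaining after these shifts the walk $q_0 \ldots q_{l(q)} p_0 \ldots p_{n - l(q)}$, whose first vertex is $q_0 = p_0$ and whose last vertex is $p_{n-l(q)}$. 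This is not yet a cycle, so instead I would stop after exactly the right number of shifts: since $l(q) \le \text{diam}(G)$, after at most $\text{diam}(G)$ shifts (each costing distance $1$) we reach a walk $w$ whose endpoints coincide, i.e. a cycle, provided we chose the incoming letters so that the walk closes up. The cleanest bookkeeping is: let $c := p \odot q$ be a cycle of length $n + l(q)$; then $c$ restricted to a window of length $n$ sliding along it gives a path of walks in $\Delta_G^n$ of length $l(q) \le \text{diam}(G)$ starting at $p$ and ending at some length‑$n$ subword of $c$ that is itself a cycle (namely the window positioned so that it wraps around the closure point of $c$). One extra $\rho_l$ or $\rho_r$ step (cost $1$) fixes any off‑by‑one coming from the parity/closure alignment, giving the bound $\text{diam}(G) + 1$.

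The main obstacle is making the last reduction precise: the window of length $n$ on the cycle $c = p \odot q$ of length $n + l(q)$ is itself a cycle only when it is positioned to straddle the basepoint of $c$, and one must check that sliding from the ``$p$'' position to that ``straddling'' position takes at most $l(q) \le \text{diam}(G)$ single shifts, each valid as an edge of $\Delta_G^n$ by Remark~\ref{remark.shift.neighbors}. The $+1$ is a safety margin: depending on whether $n + l(q)$ and $n$ are aligned favourably, the first cyclically‑positioned window may require one further right‑ or left‑shift (still cost $1$) to actually have equal first and last letters. I would also remark that all intermediate walks are globally admissible for $X_G$ (they are subwalks of walks appearing in configurations), so they are legitimate vertices of $\Delta_G^n$; this uses Lemma~\ref{lem:localglobal} implicitly but is automatic here since everything is obtained by shifts from $p$, which is globally admissible. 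Finally, $d_G(p,c) \le \text{diam}(G) + 1$ follows by summing the unit costs along this explicit path, which is exactly the claimed bound.
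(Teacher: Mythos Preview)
Your proposal has a genuine gap at the crucial step. You form the cycle $c = p \odot q$ of length $n + l(q)$ (with $q$ a walk of length $\le \text{diam}(G)$ from $p_n$ to $p_0$) and then slide a length-$n$ window along it. After $j$ left shifts you are looking at the walk
\[
p_j\,p_{j+1}\ldots p_n\,q_1\ldots q_j,
\]
whose endpoints are $p_j$ and $q_j$. For this to be a cycle you need $p_j = q_j$, and there is no reason whatsoever for such a $j \le l(q)$ to exist (easy examples show it may fail for every $j$). The ``$+1$ safety margin'' does not help: one more left or right shift changes both endpoints, and again nothing forces them to coincide. So the sliding-window idea, as stated, never actually lands on a cycle.

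What the paper does differently is precisely to manufacture such a coincidence. It considers the quantity $l_k := k - \delta(p_n,p_k)$, notes that $l_0 \le 0$, $l_{\text{diam}(G)+1} \ge 1$, and $l_{k+1}-l_k \in \{0,1,2\}$, and concludes by a discrete intermediate-value argument that some $m \le \text{diam}(G)+1$ satisfies $\delta(p_n,p_m) \in \{m,m-1\}$; bipartiteness then forces $\delta(p_n,p_m)=m$. Now take a shortest walk $q$ of length exactly $m$ from $p_n$ to $p_m$: after $m$ left shifts, the walk $p_m \ldots p_n \odot q$ has \emph{both} endpoints equal to $p_m$, so it is a cycle. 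The missing idea in your argument is exactly this choice of the target vertex $p_m$ (rather than $p_0$) via the intermediate-value step; without it the window never closes and the bound $\text{diam}(G)+1$ cannot be obtained.
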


\begin{proof}
Consider a walk $p$ of even length $n$. The result is clear when $n \leq\text{diam}(G)$, so we assume that $n >\text{diam}(G)$.
\begin{enumerate}
\item \textbf{There exists $m \le\text{diam}(G)+1$ such that there is a walk from $p_{n}$ to $p_m$ of length 
$m$ or $m-1$:} for all $k \in \{0, \dots, n\}$, let us set $l_k := k - \delta(p_n,p_k)$. We have that $l_0 = -\delta(p_n,p_0) \le 0$, and $l_{\text{diam}(G)+1} \ge 1$ by definition of the diameter. 
Furthermore, for all $k < n$, we have: 
\[ 0 \le l_{k+1} - l_k \le 2.\]
As a consequence there exists an integer $m \le\text{diam}(G)+1$ such that $l_m \in \{0,1\}$, which means that $\delta(p_n,p_m)$ is $m$ or $m-1$.

\item \textbf{We have $\delta(p_n,p_m)=m$:} let $q$ be a walk of length $\delta(p_n,p_m)$ from $p_n$ to $p_m$. If $q$ is of length $m-1$, the walk $p_m \hdots p_n \odot q_0 \hdots q_{m-1}$ is a cycle of odd length $n-1$, which is impossible since $G$ is bipartite; it follows that $q$ is of length $m$.
 
\item \textbf{Conclusion:} define, for all $k \le m$, the walk $p^{(k)} = p_k \hdots p_n \odot q_0 \hdots q_k$. 
Then $p^{(0)} = p$, $p^{(k)}$ and $p^{(k+1)}$ are neighbors in $\Delta^n_G$ for all $k \le m-1$, 
and $p^{(m)}$ is a cycle. \qedhere
\end{enumerate}
\end{proof}

\section{Decomposability of simple cycles into squares\label{section.decomposability}}

\subsection{Universal cover\label{section.universal}}

The notion of universal cover comes from the notion of topological covering space and can be defined in an abstract manner as a universal object with regards to so-called graph coverings. This point of view is well-explained in \cite{S83} Section 4.1 or \cite{CM18} Section 5. Below we present an explicit construction due to D.~Angluin \cite{A80}.

\begin{definition}
For every $a \in V_G$, let $\mathcal{U}_G[a]$ be the graph whose vertices are the non-backtracking walks on $G$ beginning at $a$ and whose edges are the pairs of walks $(p,q)$ such that $p = qv$ or $q = pv$ for some vertex $v \in V_G$. 
\end{definition}

\begin{notation}
For two vertices $a,b$ which are neighbors in $G$, let $\psi_{a \mapsto b} : \mathcal{U}_G[a] \rightarrow \mathcal{U}_G[b]$ be the graph morphism defined by 
$\psi_{a \mapsto b}(q) = \varphi(b q)$ for all $q \in V_{\mathcal{U}_G[a]}$. 
\end{notation}

\begin{lemma}\label{lemma.morphism}
For all $a,b$ neighbors in $G$, 
$\psi_{a \mapsto b} \circ \psi_{b \mapsto a} = id_{V_{\mathcal{U}_G[b]}}$.
\end{lemma}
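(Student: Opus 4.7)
The plan is to unwind the definitions of the two maps and reduce the composition to a single spine-reduction, then exhibit an explicit spine that makes that reduction collapse back to $q$.

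Step 1 (setting up). I would fix $q \in V_{\mathcal{U}_G[b]}$, i.e., a non-backtracking walk with $q_0 = b$. Since $a$ and $b$ are neighbors, $aq$ is a walk starting at $a$, and the definitions give
\[\psi_{a \mapsto b}\bigl(\psi_{b \mapsto a}(q)\bigr) = \psi_{a \mapsto b}\bigl(\varphi(aq)\bigr) = \varphi\bigl(b \cdot \varphi(aq)\bigr).\]
The aim is to show this equals $q$.

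Step 2 (commuting $\varphi$ with concatenation). I would first establish the auxiliary identity: for any walk $u$ and any walk $v$ such that $u \odot v$ is defined,
\[\varphi(u \odot \varphi(v)) = \varphi(u \odot v).\]
This follows directly from the well-definedness of $\varphi$ stated just before the lemma: the reduction of a word by successive spine-removals does not depend on the order in which spines are removed. Concretely, any reduction sequence producing $\varphi(u \odot \varphi(v))$ can be preceded by the reductions that take $v$ to $\varphi(v)$, yielding a reduction sequence for $u \odot v$ with the same final result. Applying this identity with $u = b$ (viewed as a length-$0$ walk) and $v = aq$,
\[\varphi\bigl(b \cdot \varphi(aq)\bigr) = \varphi(b \cdot a \cdot q) = \varphi(baq).\]

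Step 3 (the key spine). Since $q_0 = b$, the word $baq$ begins with the three letters $b, a, b$, which is a spine. Removing this spine replaces the initial $bab$ by $b$, leaving exactly $b \cdot q_1 q_2 \dots q_{l(q)} = q$. Because $q$ is non-backtracking, $\varphi(q) = q$, so $\varphi(baq) = q$. Chaining with Steps~1 and~2 gives $\psi_{a \mapsto b} \circ \psi_{b \mapsto a}(q) = q$, which is the desired identity.

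The only subtle point is the commutation identity in Step 2. It is essentially the content of the notation introducing $\varphi$ (the fact that the reduction is well-defined), but if one wants a clean argument it can be proved by a one-line diamond-lemma observation: two spine-reductions applied at disjoint positions commute, and two overlapping spine-reductions lead to the same reduct. Once that is in hand, the rest of the proof is a formal manipulation, and the argument is symmetric in $a$ and $b$, so the reverse composition is handled identically.
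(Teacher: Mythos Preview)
Your proof is correct and follows exactly the same approach as the paper: use the order-independence of spine removal to collapse $\varphi(b\,\varphi(aq))$ to $\varphi(baq)$, then observe that $q_0=b$ makes $bab$ an initial spine whose removal yields $q$. The paper's proof is simply your argument compressed into a single sentence.
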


\begin{proof}
Let $p$ be a non-backtracking walk beginning at $b$. Since $p_0 = b$ and spines can be removed in any order, we have that $\psi_{a \mapsto b} \circ \psi_{b \mapsto a} = \varphi(bap) = p$.
\end{proof} 

\begin{lemma}
All the graphs $\mathcal{U}_G[a]$, $a \in V_G$, are isomorphic. 
\end{lemma}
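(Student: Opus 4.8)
The plan is to reduce the general statement to the case of adjacent vertices, and then exploit Lemma~\ref{lemma.morphism} to upgrade the morphisms $\psi_{a\mapsto b}$ into isomorphisms. First I would observe that for neighbours $a,b$ in $G$, the maps $\psi_{a\mapsto b}\colon \mathcal{U}_G[a]\to\mathcal{U}_G[b]$ and $\psi_{b\mapsto a}\colon \mathcal{U}_G[b]\to\mathcal{U}_G[a]$ are graph morphisms by definition, and Lemma~\ref{lemma.morphism} (applied once as stated and once with the roles of $a$ and $b$ exchanged) gives $\psi_{a\mapsto b}\circ\psi_{b\mapsto a}=\mathrm{id}_{V_{\mathcal{U}_G[b]}}$ and $\psi_{b\mapsto a}\circ\psi_{a\mapsto b}=\mathrm{id}_{V_{\mathcal{U}_G[a]}}$. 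Hence $\psi_{a\mapsto b}$ is a bijection on vertices whose inverse $\psi_{b\mapsto a}$ is itself a graph morphism, so $\psi_{a\mapsto b}$ is a graph isomorphism $\mathcal{U}_G[a]\cong\mathcal{U}_G[b]$.

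Next, for arbitrary $a,b\in V_G$, I would use that $G$ is connected to pick a walk $a=c_0,c_1,\dots,c_k=b$ in $G$. Composing the isomorphisms from the previous step along this walk,
\[
\psi_{c_{k-1}\mapsto c_k}\circ\cdots\circ\psi_{c_1\mapsto c_2}\circ\psi_{c_0\mapsto c_1}\colon \mathcal{U}_G[a]\longrightarrow \mathcal{U}_G[b],
\]
yields a graph isomorphism (a composition of isomorphisms is an isomorphism), which proves the lemma. Note that we never need independence of the chosen walk: the statement only asserts abstract isomorphism, so exhibiting one isomorphism suffices.

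The argument is essentially immediate once Lemma~\ref{lemma.morphism} is in hand, so there is no serious obstacle. The only point requiring a little care is the elementary fact that a bijective graph morphism need not be an isomorphism in general, which is why one must invoke that the set-theoretic inverse here is precisely $\psi_{b\mapsto a}$, already known to be a morphism; and, very mildly, that $\psi_{a\mapsto b}$ and $\psi_{b\mapsto a}$ really are inverse as \emph{vertex} maps (so they also match edges in both directions). I would also remark in passing that the graph $\mathcal{U}_G[a]$ is, up to isomorphism, independent of the basepoint $a$, which justifies later speaking of "the" universal cover $\mathcal{U}_G$ of $G$.
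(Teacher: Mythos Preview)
Your proposal is correct and follows essentially the same approach as the paper: use Lemma~\ref{lemma.morphism} to see that $\mathcal{U}_G[a]\cong\mathcal{U}_G[b]$ for neighbours $a,b$, then propagate along a path by connectedness of $G$. The paper's proof is simply a terser version of yours; your additional care in noting that the inverse of $\psi_{a\mapsto b}$ is itself a graph morphism (so the bijection is genuinely a graph isomorphism) is a welcome clarification.
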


\begin{proof}
By Lemma~\ref{lemma.morphism}, $\mathcal{U}_G[a]$ and $\mathcal{U}_G[b]$
are isomorphic for all $a$ and $b$ neighbors in $G$. Since $G$ is connected, this is sufficient. 
\end{proof}

The corresponding isomorphic class is usually called the \textbf{universal cover} of $G$, denoted by $\mathcal{U}_G$, and is thought as an unlabeled graph which admits labelings having an interpretation in terms of walks on $G$.\bigskip

In the following sections, we introduce concepts which are built upon the universal cover, in particular what we call the square cover (Section~\ref{sec:sqcover}). Let us take a moment here to explain the intuition behind this construction. We would like to establish a correspondence between $X_G$ and  $X_{\mathcal{U}_G}$, in order to deduce the block gluing gap function of $X_G$ from the one of $X_{\mathcal{U}_G}$: the latter is much easier to deal with since $\mathcal{U}_G$ is a tree.

We proceed as follows. We associate to any $x\in X_G$ a configuration $\overline{x}$ in $X_{\mathcal{U}_G}$ in the following way, using labeling $\mathcal{U}_G[x_{\textbf{0}}]$ for $\mathcal{U}_G$: first set $\overline{x}_{\textbf{0}} = x_{\textbf{0}}$; then for all other $\textbf{i}\in\mathbb Z^2$, choose a walk $p$ from $\textbf{0}$ to $\textbf{i}$, and set $\overline{x}_\textbf{i}$ equal to $\varphi(x_{p_0}x_{p_1}\dots x_{p_{l(p)}})$ (in particular, $\eta(\overline{x}_i) = x_i$).
However this definition makes sense only if the value of $\overline{x}_i$ is independent from the choice of walk $p$. This is the case only if, for every $x\in X_G$ and every cycle in $\mathbb Z^2$ (see an illustration on Figure~\ref{fig:funda}), the corresponding cycle in $G$ backtracks to a trivial cycle.

\begin{figure}[h!]
\begin{center}
\begin{tikzpicture}[every node/.style={draw,circle, minimum size=.6cm, inner sep=0pt}, baseline={([yshift=-.5ex]current bounding box.center)}]
\node (a) at (0,0) {a};
\node (b) at (1.6,0) {b};
\node (c) at (.8,.8) {c};
\draw (a) -- (b) -- (c) -- (a);
\path (a) edge[loop left,looseness=5, in=150, out=210] (a);
\end{tikzpicture}
\hspace{2cm}
\begin{tikzpicture}[scale=0.6, baseline={([yshift=-.5ex]current bounding box.center)}]
\draw[dotted] (0,0) grid (5,3);
\foreach \x/\y/\z in {0/0/a, 0/1/b, 0/2/a, 1/0/c, 1/1/a, 1/2/b, 2/0/a, 2/1/b, 2/2/c, 3/0/a, 3/1/c, 3/2/a, 4/0/c, 4/1/b, 4/2/a }{
\node at (\x+.5, \y+.5) {$\z$};}
\draw[->] (.5,.5) -- (4.5,.5) -- (4.5, 2.5) -- (.5, 2.5) -- (.5, .5);
\end{tikzpicture}
\caption{On the \textbf{left}: a graph $G$ containing a cycle of length four ($aabca$). On the \textbf{right}: the cycle $w = acaacbaacbaba$ obtained by following 
a loop in $\mathbb Z^2$ in a configuration of $X_G$.
\label{fig:funda}}
\end{center}
\end{figure}
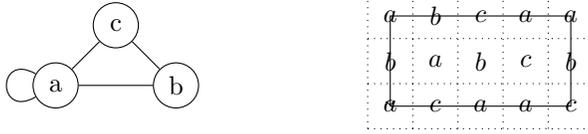

On the one hand, the set of cycles which are obtained as above, by following a cycle of $\mathbb Z^2$ in a configuration of $X_G$, corresponds to trivial cycles in the \emph{fundamental group} of the subshift $X_G$, a notion that was introduced in \cite{GP95} for general subshifts and received some recent attention \cite{PV22}. On the other hand, the universal cover of a graph is related to its own fundamental group: a cycle is trivial if and only if removing backtracks yields the empty cycle; removing backtracks plays the role of homotopy in this context. 

With this homotopic point of view, the definition above makes sense when the fundamental group of $X_G$ and the fundamental group of $G$ are isomorphic. The results of \cite{CM18} use implicitely the fact that this is true when $G$ does not contain any cycle of length $4$: this was their driving hypothesis. However this is not true in general.

In the general case, we can see that the structure of loops in $\mathbb Z^2$ implies that cycles in $X_G$ correspond to cycles in $G$ that can be, in some sense, decomposed into cycles of length $4$. Therefore the fundamental group of $X_G$ is related to the quotient of the universal cover by cycles of length $4$.
This is the intuition behind the definition of the square cover.

\subsection{Square decomposition}\label{sec:square}

\begin{denomination}
A \textbf{square} is a non-backtracking cycle of length four.
\end{denomination}

In the present section, we define the notion of decomposability into squares for simple cycles of $G$.

\begin{notation}
    For two cycles $c$ and $c'$ and $k \le l(c)$ such that $c_k = c'_0$, the cycle $c {\oplus}_{k} c'$ is defined by:
\[c {\oplus}_{k} c' = c_0 \hdots c_{k-1} c' c_{k+1} \hdots c_{l(c)}.\]
\end{notation}

\begin{definition}
Let us consider two non-backtracking walks $p,q$. We say that $p$ and $q$ \textbf{differ by a square} when there exists an integer $k$ and a square $s$ such that 
$q = \varphi(p {\oplus}_k s)$ or 
$p = \varphi(q {\oplus}_k s)$.
Figure~\ref{figure.differ} illustrates the types of pairs $(p,q)$ of walks which differ by a square. 

Let $\sim_{\square}$ be the transitive closure of this relation between walks.
\end{definition}

\begin{figure}[h!]

\begin{center}
\begin{tikzpicture}[scale=0.3]
\draw (0,0) -- (1,1) -- (2,1) -- (3,2) -- (4,1) -- (5,1) -- (6,2);
\draw[-latex] (0,0) -- (0.5,0.5);
\draw[-latex] (5,1) -- (5.75,1.75);
\draw[-latex] (2,1) -- (2.75,1.75);
\draw[-latex] (2,1) -- (2.75,0.25);

\draw (2,1) -- (3,0) -- (4,1);
\draw[dashed] (6,2) -- (7,3);
\draw[dashed] (0,0) -- (-1,-1);
\node at (3,-1) {p};
\node at (3,3) {q};
\node at (-2,1) {\textbf{(i)}};

\begin{scope}[xshift=12cm]
\draw (0.5,0) -- (1.5,1) -- (2.5,1) -- (4,1) -- (5,1) -- (6,2);
\draw[-latex] (0.5,0) -- (1,0.5);
\draw[-latex] (2.5,1) -- (3.5,1);
\draw[-latex] (2.5,-0.5) -- (3.5,-0.5);
\draw[-latex] (5,1) -- (5.75,1.75);
\draw (2.5,1) -- (2.5,-0.5) -- (4,-0.5) -- (4,1);
\draw[dashed] (6,2) -- (7,3);
\draw[dashed] (0.5,0) -- (-0.5,-1);
\node at (3.25,-1.5) {p};
\node at (3.25,2) {q};
\node at (-2,1) {\textbf{(ii)}};
\end{scope}

\begin{scope}[xshift=24cm]
\draw (0,0) -- (1,1) -- (2,1) -- (4,1) -- (5,1) -- (6,2);
\draw[-latex] (0,0) -- (0.5,0.5);
\draw[-latex] (5,1) -- (5.75,1.75);
\begin{scope}[yshift=-3cm]
\draw (3,1) -- (2,0) -- (3,-1) -- (4,0) -- (3,1);
\draw[-latex] (3,-1) -- (3.75,-0.25);
\end{scope}
\draw (3,1) -- (3,0.5);
\draw[dashed] (3,0.5) -- (3,-1.5);
\draw (3,-1.5) -- (3,-2);
\draw[dashed] (6,2) -- (7,3);
\draw[dashed] (0,0) -- (-1,-1);
\node at (3,-5) {p};
\node at (3,2) {q};
\node at (-2,1) {\textbf{(iii)}};
\end{scope}
\end{tikzpicture}
\end{center}
\caption{Partial representation of two walks $p,q$ which differ by a square.\label{figure.differ}}
\end{figure}
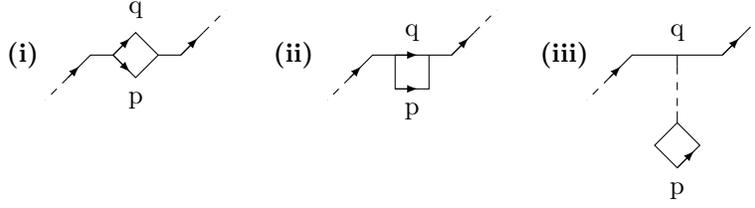

\begin{definition}\label{definition.decomposable}
A \textbf{square decomposition} of a simple cycle $c$ on $G$ is any sequence $(p^{(i)})_{0\le i \le m}$ of non-backtracking walks such that $c = p^{(0)}$, $p^{(m)}$ is an empty cycle, and $p^{(i)}$ and $p^{(i+1)}$ differ by a square for all $i<m$. The cycle $c$ is said to be \textbf{decomposable into squares} when such a decomposition exists, which is equivalent to $c\sim_{\square} c'$ for some empty cycle $c'$. The smallest length $m$ for which $c$ has a decomposition $(p^{(i)})_{0\le i \le m}$ is called the \textbf{area} of $c$ and is denoted $m_c$. 
\end{definition}

\begin{definition}
A graph is said to be \textbf{square-decomposable} when all of its simple cycles are decomposable into squares.
\end{definition}

It is straightforward that when $G$ is square-decomposable, every cycle (non necessarily simple) of $G$ is also decomposable into squares. It follows that:

\begin{lemma}\label{lemma.even.length}
A square-decomposable graph is bipartite.
\end{lemma}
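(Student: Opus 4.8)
The plan is to show that a square-decomposable graph cannot contain an odd cycle, and then invoke Lemma~\ref{lem:bipartite}. The key observation is that the relation ``differs by a square'' preserves the parity of the length of a non-backtracking walk, and that the operation $\varphi$ (removing spines) does as well. Indeed, a spine $aba$ has length $2$, so replacing it by $a$ changes the length by $2$; hence $\varphi(p)$ has the same parity of length as $p$. Similarly, the operation $c \oplus_k s$ inserts a square (length $4$) in place of one vertex, so $\varphi(p \oplus_k s)$ has length of the same parity as $p$.

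First I would make this precise: if $p$ and $q$ are non-backtracking walks which differ by a square, then $l(p) \equiv l(q) \pmod 2$. This follows from the remarks above, since $q = \varphi(p \oplus_k s)$ (or vice versa) and inserting a length-$4$ square then removing spines changes the length only by even amounts. By induction along a square decomposition $(p^{(i)})_{0 \le i \le m}$, every walk $p^{(i)}$ has length of the same parity; in particular, since $p^{(m)}$ is an empty cycle (length $0$), the original cycle $c = p^{(0)}$ has even length.

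Next I would apply this to an arbitrary simple cycle $c$ of $G$: since $G$ is square-decomposable, $c$ admits a square decomposition, so by the previous step $l(c)$ is even. Thus $G$ has no simple cycle of odd length. It remains to note that if $G$ had \emph{any} cycle of odd length, it would have a \emph{simple} one: a standard argument decomposes a shortest odd cycle into simple sub-cycles, at least one of which must be odd. Hence $G$ has no odd cycle at all, and by Lemma~\ref{lem:bipartite} it is bipartite.

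The main obstacle, and the only point requiring a little care, is verifying the parity invariance of ``differs by a square'' — specifically, confirming that applying $\varphi$ after the insertion $p \oplus_k s$ does not change parity. This reduces to the two elementary facts that inserting a square changes length by $3$ relative to deleting the single vertex it replaces, which is odd, wait — one must be careful: $c \oplus_k s$ replaces the single vertex $c_k$ by the cycle $s$ of length $4$ (i.e.\ $5$ vertices, but the two endpoints coincide with $c_k$), so the word length increases by $4$, an even number; then $\varphi$ only removes spines, each of even length. So both operations preserve parity, and the argument goes through cleanly. The rest is bookkeeping with the induction and the reduction from cycles to simple cycles, both routine.
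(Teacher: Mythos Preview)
Your proof is correct and follows essentially the same idea as the paper's: the key observation is that two walks differing by a square have lengths of the same parity, so any square-decomposable cycle has even length, and Lemma~\ref{lem:bipartite} finishes. The paper's proof is a two-line version of yours; the only minor difference is that the paper handles non-simple cycles by noting (just before the lemma) that square-decomposability of simple cycles implies the same for all cycles, whereas you instead reduce an arbitrary odd cycle to a simple one---both routes are fine. Your mid-proof self-correction about the length change under $\oplus_k$ lands on the right count ($l(p\oplus_k s)=l(p)+4$), so the parity argument goes through cleanly.
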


\begin{proof}
It is sufficient to see that for $p,q$ two walks which differ by a square, $l(p)-l(q)$ is even. Therefore every cycle of a square-decomposable graph is of even length.
\end{proof}

As a consequence of Lemma~\ref{lemma.even.length} and Lemma~\ref{lemma.walks.to.cycles} we have the following: 

\begin{corollary}
If $G$ is square-decomposable, then $\Theta(\gamma_G)$ is the class of the function $n\mapsto\max d_G(c,c')$ where the maximum is taken on all cycles $c$ and $c'$ of length $n$.
\end{corollary}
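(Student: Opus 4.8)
The plan is to chain together the two preceding results: Proposition~\ref{prop:block-gluing} identifies $\Theta(\gamma_G)$ with $\Theta(d_G)$, where $d_G(n) = \text{diam}(\Delta_G^n)$, and Lemma~\ref{lemma.even.length} tells us that a square-decomposable $G$ is bipartite, so Lemma~\ref{lemma.walks.to.cycles} applies. The target quantity is $D_G(n) := \max\{ d_G(c,c') : c,c' \text{ cycles of length } n\}$, which is only defined when $n$ is even (by bipartiteness, $G$ has no cycle of odd length). So the first move is to reduce from the diameter of $\Delta_G^n$ over all walks to the distance between cycles, and then invoke Lemma~\ref{lemma.even} to pass freely between consecutive (even) values of $n$ without changing the $\Theta$-class.

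In detail, I would first record the trivial inequality $D_G(n) \le d_G(n)$: any pair of cycles of length $n$ are in particular walks of length $n$, so their $\Delta_G^n$-distance is at most the diameter. For the reverse direction, fix $n$ even and take arbitrary walks $p, q$ of length $n$. By Lemma~\ref{lemma.walks.to.cycles} there are cycles $c, c'$ of length $n$ with $d_G(p,c) \le \text{diam}(G)+1$ and $d_G(q,c') \le \text{diam}(G)+1$. The triangle inequality for $d_G$ then gives
\[
d_G(p,q) \le d_G(p,c) + d_G(c,c') + d_G(c',q) \le D_G(n) + 2(\text{diam}(G)+1).
\]
Taking the max over $p,q$ yields $d_G(n) \le D_G(n) + 2\,\text{diam}(G) + 2$. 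Since $\text{diam}(G)$ is a constant (independent of $n$), this shows $d_G(n) = O(D_G(n))$ and hence, combined with the trivial bound, $d_G = \Theta(D_G)$ on even integers.

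Finally I would handle the parity bookkeeping. The function $\gamma_G$ is defined on all of $\mathbb{N}^*$, while $D_G(n)$ is only naturally defined for even $n$ (and, as just shown, agrees with $d_G(n)$ up to an additive constant there). By Lemma~\ref{lemma.even}, $\Theta(\gamma_G)$ equals the $\Theta$-class of $n \mapsto \gamma_G(2\lfloor n/2 \rfloor)$, i.e.\ $\gamma_G$ is determined up to $\Theta$ by its values on even integers; and $\Theta(\gamma_G) = \Theta(d_G)$ by Proposition~\ref{prop:block-gluing}. Stringing these together, $\Theta(\gamma_G) = \Theta(d_G) = \Theta(D_G)$ where all functions are restricted to even arguments, which is exactly the claimed statement (extending $D_G$ to odd arguments, say by $D_G(n) := D_G(n+1)$, changes nothing up to $\Theta$ by Lemma~\ref{lemma.even}).

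The only mild subtlety — not really an obstacle — is being careful that $d_G$ satisfies the triangle inequality (it does, being a graph distance on $\Delta_G^n$) and that the constant $\text{diam}(G)+1$ from Lemma~\ref{lemma.walks.to.cycles} is genuinely independent of $n$, so that it is absorbed into the additive constant $K$ allowed in the definition of $O(\cdot)$. There is nothing deep here: the corollary is essentially a formal consequence of Proposition~\ref{prop:block-gluing}, Lemma~\ref{lemma.walks.to.cycles}, Lemma~\ref{lemma.even.length} and Lemma~\ref{lemma.even}, so the write-up is a two- or three-line argument once those are in place.
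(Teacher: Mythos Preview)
Your proposal is correct and follows exactly the route the paper intends: the corollary is stated as an immediate consequence of Lemma~\ref{lemma.even.length} and Lemma~\ref{lemma.walks.to.cycles} (together with the already-established Proposition~\ref{prop:block-gluing} and Lemma~\ref{lemma.even}), with no further argument given. Your write-up simply spells out the triangle-inequality step and the parity bookkeeping that the paper leaves implicit.
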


\subsection{Square cover\label{sec:sqcover}}

In this section, we define the square cover, which is central in the dichotomy between $\Theta(n)$-block gluing and $O(\log(n))$-block gluing Hom shifts.

\begin{notation}
Denote by $\quadcover{G}[a]$ the quotient of $\mathcal{U}_G[a]$ by the equivalence relation $\sim_{\square}$. This means that it is the graph 
whose vertices are the equivalence classes for $\sim_{\square}$ of vertices of $\mathcal{U}_G[a]$,
and there is an edge between two classes if there is an edge between two elements of these classes. We also denote by $\pi_a$ the projection from $\mathcal{U}_G[a]$ to  $\quadcover{G}[a]$.
\end{notation}

Let us see that the graphs $\quadcover{G}[a]$, $a \in V_G$ are all isomorphic.

\begin{lemma}
For every $a, b$ neighbors in $G$ and $p,q \in V_{\mathcal{U}_G[b]}$ which differ by a square, we have that $\psi_{b \mapsto a}(p)$ and $\psi_{b \mapsto a}(q)$ also differ by a square.
\end{lemma}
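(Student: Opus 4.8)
The plan is to use the explicit description of the morphism $\psi_{b \mapsto a}$ together with the compatibility between the backtracking-reduction operator $\varphi$ and the $\oplus_k$ operation. Recall that $\psi_{b \mapsto a}(p) = \varphi(ap)$ for every non-backtracking walk $p$ starting at $b$. Suppose $p, q \in V_{\mathcal{U}_G[b]}$ differ by a square; without loss of generality $q = \varphi(p \oplus_k s)$ for some square $s$ and some index $k \le l(p)$ with $p_k = s_0$. I would first reduce to the case where no reduction has yet taken place, i.e.\ analyze $ap \oplus_{k+1} s$: since prepending the vertex $a$ (with $a$ a neighbor of $b = p_0$) shifts all indices by one, the same square $s$ can be glued into $ap$ at position $k+1$, because $(ap)_{k+1} = p_k = s_0$.

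The key step is then the identity
\[
\varphi\bigl((ap) \oplus_{k+1} s\bigr) = \varphi\bigl(a \cdot \varphi(p \oplus_k s)\bigr) = \psi_{b \mapsto a}\bigl(\varphi(p \oplus_k s)\bigr) = \psi_{b \mapsto a}(q),
\]
where the first equality uses that spines can be removed in any order (the same fact invoked in the proof of Lemma~\ref{lemma.morphism}), so reducing $(ap)\oplus_{k+1}s$ is the same as first reducing the $p\oplus_k s$ portion and then reducing what remains after prepending $a$. Meanwhile $\varphi(ap) = \psi_{b \mapsto a}(p)$ by definition. So setting $p' = \psi_{b \mapsto a}(p)$ and $q' = \psi_{b \mapsto a}(q)$, we want to exhibit a square $s'$ and index $k'$ with $q' = \varphi(p' \oplus_{k'} s')$ or $p' = \varphi(q' \oplus_{k'} s')$. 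The natural candidate is to take the image of the gluing point of $s$ under the reduction, but one must be careful: prepending $a$ and reducing may cancel the initial part of $p$, possibly interacting with the location where $s$ was inserted.

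The main obstacle, therefore, is the bookkeeping when the square $s$ is glued at or very near the beginning of $p$ (i.e.\ $k = 0$ or $k$ small), since then the reduction triggered by prepending $a$ can reach into the inserted square. I would handle this by a case distinction: (i) if $k \ge 1$, the reduction $\varphi(a \cdot p)$ only affects a prefix of $p$ of length at most $k$ worth of possible cancellation is bounded away from the insertion, and the square survives intact in the image, giving $q' = \varphi(p' \oplus_{k'} s)$ with the same $s$ and an appropriately relabelled $k'$; (ii) if $k = 0$, then $s_0 = p_0 = b$, and one computes $\varphi(a p)$ and $\varphi(a \varphi(p \oplus_0 s))$ directly, observing that inserting $s$ at the basepoint and then prepending $a$ either leaves a square (possibly a different one, involving $a$) or reduces to a pure relabelling, so $p'$ and $q'$ still differ by a square (or are equal, in which case the statement is vacuously true since $\sim_\square$ is reflexive — though here we only need "differ by a square" in the sense of the defining relation, and equal walks do differ by a square only after passing to $\sim_\square$; I would phrase the conclusion as "$\psi_{b\mapsto a}(p) \sim_\square \psi_{b\mapsto a}(q)$" if a direct single-square step fails, which still suffices for the intended application to well-definedness of the quotient). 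The symmetric case $p = \varphi(q \oplus_k s)$ is handled identically using Lemma~\ref{lemma.morphism} to pass between $\psi_{a \mapsto b}$ and $\psi_{b \mapsto a}$.
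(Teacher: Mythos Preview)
Your approach is the paper's approach---the identity $\psi_{b\mapsto a}(q)=\varphi\bigl((ap)\oplus_{k+1}s\bigr)$ is exactly the starting point there---but your case split is miscalibrated and you punt on the one case that actually requires work.

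Since $p$ is non-backtracking, prepending $a$ can create at most one spine, and this occurs precisely when $p_1=a$. So the correct dichotomy is $p_1\neq a$ versus $p_1=a$, not $k\ge 1$ versus $k=0$. If $p_1\neq a$ then $\psi_{b\mapsto a}(p)=ap$ is already reduced and the same square $s$ works at position $k+1$, for \emph{any} $k$. If $p_1=a$ and $k\ge 1$, the spine $aba$ at the front cancels cleanly away from the insertion and the same $s$ works at position $k-1$. Your sentence ``the reduction only affects a prefix of $p$ of length at most $k$'' is not what is happening: the reduction touches at most two letters, independently of $k$.

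The genuine content is the case $p_1=a$ and $k=0$, where the inserted square sits at the basepoint and interacts with the cancellation. The paper handles it by a further split on which of $s_1,s_3$ equals $a$ (recall $s_0=p_0=b$). In one subcase a \emph{different} square is needed (namely $as_2s_3ba$ when $s_1=a\neq s_3$), and in another the roles swap: one shows $\psi_{b\mapsto a}(p)=\varphi\bigl(\psi_{b\mapsto a}(q)\oplus_1 s^{-1}\bigr)$. In every subcase a single square suffices, so your fallback to $\sim_\square$ is unnecessary---and since the lemma asserts the single-step relation ``differ by a square'', that fallback would not prove the statement as written (even if it is enough for the downstream corollary). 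You should carry out this subcase analysis explicitly rather than hedge.
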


\begin{proof}
There exist a square $s$ and some $k \le l(q)$ such that $q = \varphi(p \oplus_k s)$. Since removing spines in any order give the same result, we have:
\[\psi_{b \mapsto a}(q) = \varphi(aq) = \varphi(a\varphi(p \oplus_k s)) = \varphi(a (p\oplus_k s)) = \varphi (ap \oplus_{k+1} s).\]
We now distinguish three cases.
\begin{enumerate}
\item If $p_1 \neq a$, then $\psi_{b \mapsto a}(p) = ap$ so $\varphi (ap \oplus_{k+1} s) = \varphi(\psi_{b \mapsto a}(p) \oplus_{k+1} s)$.

\item If $p_1 = a$ and $k>0$, then $ap \oplus_{k+1} s = ap_0 (\psi_{b \mapsto a}(p) \oplus_{k-1} s)$ and, since spines can be removed in any order, 
$\varphi (ap \oplus_{k+1} s) = \varphi(\psi_{b \mapsto a}(p) \oplus_{k-1} s)$.

\item If $p_1 = a$ and $k=0$, then $\psi_{b \mapsto a}(q) = \varphi(a s_0 s_1 s_2 s_3 s_0\odot p)$. We again distinguish three cases:
\begin{enumerate}
\item If $s_3 = a$, then $\psi_{b \mapsto a}(q) = \varphi(a s_0 s_1 s_2 \varphi(ap)) = \varphi(\psi_{b \mapsto a}(p) \oplus_0 a s_0 s_1 s_2 s_3)$.
\item If $s_1 = a \neq s_3$, then $\varphi(a s_0 s_1 s_2 s_3 p) = a s_2 s_3 p = \varphi(\psi_{b\to a}(p) \oplus_0 a s_2 s_3 b a)$.
\item If $s_1 \neq a \neq s_3$, then $\varphi(a s_0 s_1 s_2 s_3 p) = a s_0 s_1 s_2 s_3 p$. In that case, $\varphi(\psi_{b \mapsto a}(q) \oplus_1 s^{-1}) = \varphi(ap) = \psi_{b \mapsto a}(p)$.

\end{enumerate}
\end{enumerate}
In all three cases, $\psi_{b \mapsto a}(p)$ and $\psi_{b \mapsto a}(q)$ differ by a square.
\end{proof}

\begin{corollary}\label{cor.morphism.quotient}
For every $a, b$ neighbors in $G$ and $p,q \in V_{\mathcal{U}_G[b]}$ such that $p \sim_{\square} q$, we have that $\psi_{b \mapsto a}(p) \sim_{\square} \psi_{b \mapsto a}(q)$.
\end{corollary}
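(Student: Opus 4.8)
Corollary~\ref{cor.morphism.quotient} is an immediate consequence of the preceding lemma. The plan is to observe that the relation $\sim_{\square}$ is, by definition, the transitive closure of the "differ by a square" relation, and that the preceding lemma shows exactly that this (non-transitive) relation is preserved by $\psi_{b \mapsto a}$. Transitive closures are preserved under any map that preserves the generating relation, so we are done after unwinding definitions.

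Concretely, I would argue as follows. Suppose $p \sim_{\square} q$ for $p, q \in V_{\mathcal{U}_G[b]}$. By definition of $\sim_{\square}$ as the transitive closure of "differing by a square", there is a finite chain $p = r^{(0)}, r^{(1)}, \dots, r^{(m)} = q$ of non-backtracking walks beginning at $b$ such that $r^{(i)}$ and $r^{(i+1)}$ differ by a square for every $i < m$. Applying the preceding lemma to each consecutive pair $(r^{(i)}, r^{(i+1)})$, we get that $\psi_{b \mapsto a}(r^{(i)})$ and $\psi_{b \mapsto a}(r^{(i+1)})$ differ by a square for every $i < m$. Hence $\psi_{b \mapsto a}(p) = \psi_{b \mapsto a}(r^{(0)}), \psi_{b \mapsto a}(r^{(1)}), \dots, \psi_{b \mapsto a}(r^{(m)}) = \psi_{b \mapsto a}(q)$ is a chain witnessing $\psi_{b \mapsto a}(p) \sim_{\square} \psi_{b \mapsto a}(q)$.

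There is essentially no obstacle here; the only point requiring a moment's care is that the chain $r^{(0)}, \dots, r^{(m)}$ consists of vertices of $\mathcal{U}_G[b]$, i.e. non-backtracking walks beginning at $b$, so that the preceding lemma (stated for $p, q \in V_{\mathcal{U}_G[b]}$) genuinely applies at each step. This is automatic: "differ by a square" is defined via $\varphi(\,\cdot\,\oplus_k s)$, whose output is non-backtracking, and it does not change the starting vertex, so every $r^{(i)}$ indeed lies in $V_{\mathcal{U}_G[b]}$. With that remark the corollary follows immediately, and (together with Lemma~\ref{lemma.morphism} and the analogous map in the other direction) it yields that $\psi_{b \mapsto a}$ descends to a well-defined map $\quadcover{G}[b] \to \quadcover{G}[a]$, which by the same argument applied to $\psi_{a \mapsto b}$ is an isomorphism; hence all the graphs $\quadcover{G}[a]$ are isomorphic, justifying the notation $\quadcover{G}$.
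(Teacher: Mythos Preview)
Your argument is correct and matches the paper's approach: the paper gives no explicit proof of this corollary, treating it as immediate from the preceding lemma since $\sim_{\square}$ is by definition the transitive closure of ``differ by a square''. Your chain argument is exactly the intended justification, and your closing remarks about the well-definedness of $\quadcover{G}$ anticipate the paper's subsequent discussion.
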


A direct consequence is that for all $a,b$ neighbors in $G$, we can define a morphism $\overline{\psi}_{a \mapsto b} : \quadcover{G}[a] \rightarrow \quadcover{G}[b]$ by setting $\overline{\psi}_{a \mapsto b} (\pi_a(p)) = \pi_b\left(\psi_{a\mapsto b}(p)\right)$ for every $p \in V_{\mathcal{U}_G[a]}$. Furthermore: 

\begin{lemma}
The graphs $\quadcover{G}[a]$, $a \in V_G$, are all isomorphic. 
\end{lemma}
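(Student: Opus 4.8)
The plan is to mimic exactly the argument already used to prove that all the $\mathcal{U}_G[a]$ are isomorphic, but one level up, at the level of the quotients $\quadcover{G}[a]$. First I would observe that Corollary~\ref{cor.morphism.quotient} (together with the symmetric statement obtained by swapping $a$ and $b$) guarantees that the maps $\overline{\psi}_{a \mapsto b} : \quadcover{G}[a] \rightarrow \quadcover{G}[b]$ and $\overline{\psi}_{b \mapsto a} : \quadcover{G}[b] \rightarrow \quadcover{G}[a]$ are well-defined graph morphisms for every pair $a,b$ of neighbors in $G$. These are the candidate isomorphisms between $\quadcover{G}[a]$ and $\quadcover{G}[b]$ for adjacent $a,b$, and the general isomorphism will then follow from connectedness of $G$ exactly as in the proof that the $\mathcal{U}_G[a]$ are all isomorphic.

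The key step is to check that $\overline{\psi}_{b \mapsto a}$ and $\overline{\psi}_{a \mapsto b}$ are mutually inverse. Here I would simply push Lemma~\ref{lemma.morphism} through the quotient: for any $p \in V_{\mathcal{U}_G[b]}$,
\[
\overline{\psi}_{a \mapsto b} \circ \overline{\psi}_{b \mapsto a}(\pi_b(p))
= \overline{\psi}_{a \mapsto b}\bigl(\pi_a(\psi_{b \mapsto a}(p))\bigr)
= \pi_b\bigl(\psi_{a \mapsto b}(\psi_{b \mapsto a}(p))\bigr)
= \pi_b(p),
\]
using Lemma~\ref{lemma.morphism} in the last equality, and since the $\pi_b(p)$ range over all vertices of $\quadcover{G}[b]$ this shows $\overline{\psi}_{a \mapsto b} \circ \overline{\psi}_{b \mapsto a} = \mathrm{id}_{V_{\quadcover{G}[b]}}$; symmetrically for the other composition. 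Since a graph morphism which is bijective on vertices and whose inverse is also a morphism is a graph isomorphism, $\quadcover{G}[a]$ and $\quadcover{G}[b]$ are isomorphic for all neighbors $a,b$.

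Finally, I would invoke connectedness of $G$: given arbitrary $a,b \in V_G$, pick a walk $a = a_0, a_1, \dots, a_\ell = b$ in $G$ and compose the isomorphisms $\overline{\psi}_{a_i \mapsto a_{i+1}}$ to obtain an isomorphism $\quadcover{G}[a] \cong \quadcover{G}[b]$. This is verbatim the argument used for $\mathcal{U}_G$. I do not expect a genuine obstacle here: all the real work — namely that $\sim_\square$ is preserved by $\psi_{b \mapsto a}$ — was already done in the preceding lemma and Corollary~\ref{cor.morphism.quotient}. The only mild care needed is to be explicit that the quotient map $\pi_a$ is surjective (so that a vertex-level identity on images of $\pi$ is an identity on all of $\quadcover{G}[a]$) and that one is allowed to conclude "isomorphism" from "bijective morphism with morphism inverse"; both are immediate from the definition of the quotient graph $\quadcover{G}[a]$ given just above.
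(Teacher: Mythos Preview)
Your proposal is correct and follows essentially the same approach as the paper: you push Lemma~\ref{lemma.morphism} through the quotient to get that $\overline{\psi}_{a \mapsto b} \circ \overline{\psi}_{b \mapsto a}$ is the identity, and then conclude by connectedness of $G$. The paper's proof is just a one-line compression of exactly this argument.
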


\begin{proof}
As a direct consequence of Lemma~\ref{lemma.morphism}, 
the morphism $\overline{\psi}_{a \mapsto b} \circ \overline{\psi}_{b \mapsto a}$ is the identity of $V_{\quadcover{G}[a]}$.
\end{proof}

\begin{definition}\label{definition.square.cover}
We call \textbf{square cover} of $G$ and denote by $\quadcover{G}$ the isomorphic class of the graphs $\quadcover{G}[a]$, for $a \in V_G$.
\end{definition}

In the same way as $\mathcal{U}_G$, we represent $\quadcover{G}$ as an unlabeled graph which admits labelings $\mathcal{U}_G^{\square}[a]$, $a \in V_G$. Some examples are given on Figure~\ref{figure.example.cover}. Notice that the example $\textbf{(b)}$ is isomorphic to its square cover. Proposition~\ref{proposition.equal.square} below states that this is true for any square-decomposable graph. On the other hand a graph which is not square-decomposable may have a finite or infinite square cover (examples $\textbf{(a)}$ and $\textbf{(c)}$). 

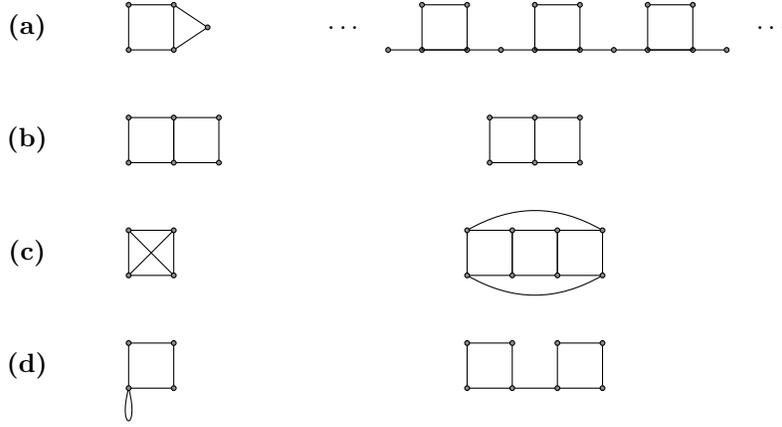
\begin{figure}[h!]
\begin{center}
\begin{tikzpicture}[scale=0.3]
\begin{scope}[yshift=1cm]
\node at (-4.5,1) {\textbf{(a)}};
\draw (0,0) rectangle (2,2);
\draw (2,2) -- (3.5,1) -- (2,0);
\draw[fill=gray!90] (0,0) circle (3pt);
\draw[fill=gray!90] (2,2) circle (3pt);
\draw[fill=gray!90] (0,2) circle (3pt);
\draw[fill=gray!90] (2,0) circle (3pt);
\draw[fill=gray!90] (3.5,1) circle (3pt);
\end{scope}

\begin{scope}[xshift=6cm,yshift=1cm]
\draw (5.5,0) -- (20.5,0);
\draw[fill=gray!90] (5.5,0) circle (3pt);
\draw[fill=gray!90] (7,0) circle (3pt);
\draw[fill=gray!90] (9,0) circle (3pt);
\draw[fill=gray!90] (10.5,0) circle (3pt);
\draw[fill=gray!90] (12,0) circle (3pt);
\draw[fill=gray!90] (14,0) circle (3pt);
\draw[fill=gray!90] (15.5,0) circle (3pt);
\draw[fill=gray!90] (17,0) circle (3pt);
\draw[fill=gray!90] (19,0) circle (3pt);
\draw[fill=gray!90] (20.5,0) circle (3pt);

\draw (7,0) rectangle (9,2);
\draw[fill=gray!90] (7,2) circle (3pt);
\draw[fill=gray!90] (9,2) circle (3pt);

\draw (12,0) rectangle (14,2);
\draw[fill=gray!90] (12,2) circle (3pt);
\draw[fill=gray!90] (14,2) circle (3pt);

\draw (17,0) rectangle (19,2);
\draw[fill=gray!90] (19,2) circle (3pt);
\draw[fill=gray!90] (17,2) circle (3pt);

\node at (22.5,1) {$\hdots$};
\node at (3.5,1) {$\hdots$};
\end{scope}

\begin{scope}[yshift=-4cm]
\node at (-4.5,1) {\textbf{(b)}};
\draw (0,0) rectangle (2,2);
\draw (2,2) rectangle (4,0);
\draw[fill=gray!90] (0,0) circle (3pt);
\draw[fill=gray!90] (2,2) circle (3pt);
\draw[fill=gray!90] (0,2) circle (3pt);
\draw[fill=gray!90] (2,0) circle (3pt);
\draw[fill=gray!90] (4,0) circle (3pt);
\draw[fill=gray!90] (4,2) circle (3pt);

\begin{scope}[xshift=16cm]
\draw (0,0) rectangle (2,2);
\draw (2,2) rectangle (4,0);
\draw[fill=gray!90] (0,0) circle (3pt);
\draw[fill=gray!90] (2,2) circle (3pt);
\draw[fill=gray!90] (0,2) circle (3pt);
\draw[fill=gray!90] (2,0) circle (3pt);
\draw[fill=gray!90] (4,0) circle (3pt);
\draw[fill=gray!90] (4,2) circle (3pt);
\end{scope}
\end{scope}

\begin{scope}[yshift=-9cm]
\node at (-4.5,1) {\textbf{(c)}};
\draw (0,0) rectangle (2,2);
\draw (0,0) -- (2,2) (0,2) -- (2,0);
\foreach \x/\y in {0/0, 0/2, 2/0, 2/2}
{
  \draw[fill=gray!90] (\x,\y) circle (3pt);
}

\begin{scope}[xshift=15cm]
\draw (0,0) rectangle (2,2);
\draw (2,0) rectangle (4,2);
\draw (4,0) rectangle (6,2);
\draw (0,2) edge[bend left] (6,2);
\draw (0,0) edge[bend right] (6,0);
\foreach \x/\y in {0/0, 0/2, 2/0, 2/2, 4/0, 4/2, 6/0, 6/2}
{
  \draw[fill=gray!90] (\x,\y) circle (3pt);
}
\end{scope}
\end{scope}

\begin{scope}[yshift=-14cm]
\node at (-4.5,1) {\textbf{(d)}};
\draw (0,0) rectangle (2,2);
\draw (0,0) edge[loop below] node{} (0,0);
\foreach \x/\y in {0/0, 0/2, 2/0, 2/2}
{
  \draw[fill=gray!90] (\x,\y) circle (3pt);
}

\begin{scope}[xshift=15cm]
\draw (0,0) rectangle (2,2);
\draw (4,0) rectangle (6,2);
\draw (2,0) -- (4,0);
\foreach \x/\y in {0/0, 0/2, 2/0, 2/2, 4/0, 4/2, 6/0, 6/2}
{
  \draw[fill=gray!90] (\x,\y) circle (3pt);
}
\end{scope}
\end{scope}

\end{tikzpicture}
\end{center}
\caption{Some finite graphs (on the left) and their square cover (on the right).\label{figure.example.cover}}
\end{figure}


\begin{notation}
 Since two walks which differ by a square have the same endpoint, we can define  $\eta :  \bigcup_a \quadcover{G}[a] \to G$ by setting $\eta(\pi_a(p)) := p_{l(p)}$ for any non backtracking walk $p$ which begins with $a$. We use the same notation for walks: for all $a$ and all walk $q$ on $\quadcover{G}[a]$, we set $\eta(q) := \eta(q_0) \ldots \eta(q_{l(q)})$.
 \end{notation}

We now arrive at the goal we set in Section~\ref{section.universal}: lifting configurations of $X_G$ to $X_{\quadcover{G}}$; such lifting allows us to relate block-gluing properties of $X_G$ and $X_{\quadcover{G}}$. 

\begin{proposition}\label{prop.square.to.graph}
For every $x \in X_G$ and $\textbf{i}_0 \in \mathbb{Z}^2$, there exists a configuration $z$ of $X_{\quadcover{G}[x_{\boldsymbol{i}_0}]}$ such that $z_{\boldsymbol{i}_0} = \pi_{ x_{\boldsymbol{i}_0}}(x_{\boldsymbol{i}_0})$ and for all $\boldsymbol{i} \in \mathbb{Z}^2$, $\eta(z_{\boldsymbol{i}})=x_{\boldsymbol{i}}$.
\end{proposition}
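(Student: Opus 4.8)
The plan is to build $z$ vertex by vertex, following the same path-lifting idea sketched in Section~\ref{section.universal}, but now using the square cover $\quadcover{G}[x_{\boldsymbol{i}_0}]$ instead of the universal cover, so that the well-definedness obstruction disappears. First I would fix the labeling $\mathcal{U}^\square_G[x_{\boldsymbol{i}_0}]$ of $\quadcover{G}$ and set $z_{\boldsymbol{i}_0} := \pi_{x_{\boldsymbol{i}_0}}(x_{\boldsymbol{i}_0})$, where $x_{\boldsymbol{i}_0}$ on the right is viewed as the length-$0$ walk at $x_{\boldsymbol{i}_0}$. For an arbitrary $\boldsymbol{i} \in \mathbb{Z}^2$, choose a walk $\boldsymbol{\gamma}$ in $\mathbb{Z}^2$ from $\boldsymbol{i}_0$ to $\boldsymbol{i}$, read off the walk $w := x_{\gamma_0} x_{\gamma_1} \dots x_{\gamma_{l(\gamma)}}$ in $G$ (which is a genuine walk since $x\in X_G$ and consecutive $\gamma_k$ are adjacent in $\mathbb{Z}^2$), and set $z_{\boldsymbol{i}} := \pi_{x_{\boldsymbol{i}_0}}(\varphi(w))$.

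The crux is to show this does not depend on the choice of $\boldsymbol{\gamma}$. Two walks in $\mathbb{Z}^2$ with the same endpoints differ by a finite sequence of elementary moves: inserting/deleting a backtrack $\textbf{a}\textbf{b}\textbf{a}$, and flipping one unit square $\textbf{a}\textbf{b}\textbf{c}\textbf{d}\textbf{a}$ to the complementary pair of edges — this is exactly the statement that $\mathbb{Z}^2$ is simply connected, and its combinatorial form is standard. Under reading into $G$ via $x$, a backtrack move changes $w$ by inserting a spine, hence does not change $\varphi(w)$ at all; and a unit-square flip changes $w$ by splicing in a closed walk of length $4$ of the form $x_{\textbf{a}}x_{\textbf{b}}x_{\textbf{c}}x_{\textbf{d}}x_{\textbf{a}}$ (the image of the boundary of a unit square), which is a non-backtracking cycle of length at most $4$ in $G$, i.e. a square or something that reduces to a spine/point. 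In the first case $\varphi$ of the two readings differ by a square, so their $\pi$-images coincide in $\quadcover{G}$; in the degenerate cases they are already equal after applying $\varphi$. Thus $z_{\boldsymbol{i}}$ is well-defined. (One small technical point: after a flip the spliced-in length-$4$ walk is attached at an interior vertex of $\varphi(w)$, so I should phrase the comparison using the $\oplus_k$ operation and invoke the definition of ``differ by a square'' together with the fact that $\varphi$ commutes with removing spines in any order.)

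It then remains to check that $z \in X_{\quadcover{G}[x_{\boldsymbol{i}_0}]}$, i.e. that $z_{\boldsymbol{i}}$ and $z_{\boldsymbol{j}}$ are adjacent in $\quadcover{G}[x_{\boldsymbol{i}_0}]$ whenever $\|\boldsymbol{i}-\boldsymbol{j}\|_\infty = 1$ with $\boldsymbol{i},\boldsymbol{j}$ differing in one coordinate. For such a pair, take a walk $\boldsymbol{\gamma}$ to $\boldsymbol{i}$ and extend it by the single step to $\boldsymbol{j}$; then the $G$-reading at $\boldsymbol{j}$ is the $G$-reading at $\boldsymbol{i}$ followed by the vertex $x_{\boldsymbol{j}}$, which is a neighbor of $x_{\boldsymbol{i}}$ in $G$. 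So the two non-backtracking representatives $\varphi(w)$ and $\varphi(w\,x_{\boldsymbol{j}})$ satisfy $\varphi(w\,x_{\boldsymbol{j}}) = \varphi(w)\,x_{\boldsymbol{j}}$ or $\varphi(w\,x_{\boldsymbol{j}})$ is $\varphi(w)$ with its last letter removed (depending on whether appending creates a spine); either way they are adjacent in $\mathcal{U}_G[x_{\boldsymbol{i}_0}]$, hence their classes are adjacent or equal in $\quadcover{G}[x_{\boldsymbol{i}_0}]$. Since $z$ is a graph homomorphism from $\mathbb{Z}^2$ to $\quadcover{G}[x_{\boldsymbol{i}_0}]$, it lies in the Hom shift. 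Finally $\eta(z_{\boldsymbol{i}}) = \eta(\pi_{x_{\boldsymbol{i}_0}}(\varphi(w))) = \varphi(w)_{l(\varphi(w))} = w_{l(w)} = x_{\boldsymbol{i}}$, since removing spines never changes the endpoint. The main obstacle is the well-definedness argument, specifically handling the unit-square flip cleanly through $\varphi$ and the $\oplus_k$ operation; everything else is bookkeeping.
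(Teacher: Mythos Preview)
Your proposal is correct and follows exactly the same construction as the paper: define $z_{\boldsymbol i}$ as the $\pi_{x_{\boldsymbol{i}_0}}$-class of the $G$-walk read along any $\mathbb{Z}^2$-path from $\boldsymbol{i}_0$ to $\boldsymbol{i}$, and argue well-definedness via $\sim_\square$. The paper's proof is a two-line sketch that simply asserts ``the corresponding walks in $G$ are equivalent for $\sim_\square$'', whereas you actually spell out why (backtrack insertions and unit-square flips in $\mathbb{Z}^2$), and you also verify the adjacency of $z_{\boldsymbol i},z_{\boldsymbol j}$ and the identity $\eta(z_{\boldsymbol i})=x_{\boldsymbol i}$, which the paper leaves implicit; one small sharpening is that your ``adjacent or equal'' can be upgraded to ``adjacent'', since $\sim_\square$ preserves length parity while neighbors in $\mathcal{U}_G[x_{\boldsymbol{i}_0}]$ differ in length by one.
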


\begin{proof}
For all $\textbf{i} \in \mathbb{Z}^2$, take any walk $\textbf{i}_0 \dots \textbf{i}_m$ in $\mathbb{Z}^2$ such that $\textbf{i}_m = \textbf{i}$ , and define $z_{\boldsymbol{i}} = \pi_{ x_{\boldsymbol{i}_0}}(x_{\textbf{i}_0}  \dots  x_{\textbf{i}_m})\in\quadcover{G}[ x_{\boldsymbol{i}_0}]$. Taking another walk in $\mathbb Z^2$ would yield the same equivalence class $z_{\boldsymbol{i}}$, since the corresponding walks in $G$ are equivalent for $\sim_\square$. Therefore $z$ is well-defined and satisfies the requirements.
\end{proof}

\begin{proposition}\label{proposition.square.to.gamma}
Let $G$ be a graph such that $\quadcover{G}$ is finite. Then $\gamma_{G} = O(\gamma_{\quadcover{G}})$.
\end{proposition}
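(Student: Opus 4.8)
The plan is to transfer the block-gluing gap of $X_{\quadcover{G}}$ to $X_G$ using the lifting map from Proposition~\ref{prop.square.to.graph} and the projection $\eta$. The first observation is that $\eta$ induces a factor map $X_{\quadcover{G}} \to X_G$: if $z \in X_{\quadcover{G}[a]}$ for some labeling, then applying $\eta$ pointwise yields a configuration of $X_G$, because adjacent vertices of $\quadcover{G}[a]$ project under $\eta$ to adjacent vertices of $G$ (they differ by appending/removing one vertex). Conversely, Proposition~\ref{prop.square.to.graph} gives, for any $x \in X_G$ and any basepoint, a lift $z \in X_{\quadcover{G}}$ with $\eta(z) = x$ and a prescribed value at the basepoint. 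So there is a surjection $\eta_\ast\colon X_{\quadcover{G}} \twoheadrightarrow X_G$ that moreover satisfies a "controlled lifting" property: given a globally admissible block pattern $p$ of size $n$ for $X_G$, pick $x \in X_G$ containing $p$ at some position $\textbf{u}$, lift it to $z$, and set $\tilde p := z_{\textbf{u} + \llbracket 0,n-1\rrbracket^2}$; then $\tilde p$ is a globally admissible block pattern of size $n$ for $X_{\quadcover{G}[a]}$ (where $a$ depends on the chosen basepoint) with $\eta(\tilde p) = p$.

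Next I would run the gluing argument. Take two globally admissible block patterns $p, p'$ of size $n$ for $X_G$ and positions $\textbf{u},\textbf{u}'$ with $\delta(\textbf{u}+\llbracket 0,n-1\rrbracket^2, \textbf{u}'+\llbracket 0,n-1\rrbracket^2) \ge \gamma_{\quadcover{G}}(n)$. Lift both to block patterns $\tilde p, \tilde p'$ for $X_{\quadcover{G}[a]}$ as above. Here one must be careful about which labeled copy of the square cover one works in: since all $\quadcover{G}[a]$ are isomorphic, fix one labeled copy, say $\quadcover{G}[a_0]$ where $a_0$ is the color of $p$ at its corner; $\tilde p$ naturally lives there. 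For $\tilde p'$, use Proposition~\ref{prop.square.to.graph} with a basepoint along a walk in $\mathbb{Z}^2$ joining the two pattern locations through a fixed configuration, so that $\tilde p'$ also lands in $\quadcover{G}[a_0]$ — concretely, lift a single $x \in X_G$ that contains $p$ at $\textbf{u}$, and take $\tilde p'$ to be the restriction of that lift near $\textbf{u}'$; but since we need $\tilde p'$ to be an \emph{arbitrary} lift matching $p'$, instead lift the configuration $y$ containing $p'$ from a basepoint in $\textbf{u}+\llbracket 0,n-1\rrbracket^2$. Then apply $(\gamma_{\quadcover{G}}, \theta_{X_{\quadcover{G}}})$-phased block gluing to $\tilde p, \tilde p'$ in $X_{\quadcover{G}[a_0]}$: there is $z \in X_{\quadcover{G}[a_0]}$ and a small shift vector $\textbf{v}$ with $\|\textbf{v}\|_\infty < \theta_{X_{\quadcover{G}}}$ placing $\tilde p$ at $\textbf{u}$ and $\tilde p'$ at $\textbf{u}'+\textbf{v}$. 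Finally push $z$ forward via $\eta_\ast$: the configuration $\eta(z) \in X_G$ contains $\eta(\tilde p) = p$ at $\textbf{u}$ and $\eta(\tilde p') = p'$ at $\textbf{u}'+\textbf{v}$. Since $\gamma_{\quadcover{G}}(n) \ge \gamma_G$-type bounds are what we want, this shows $X_G$ is $(\gamma_{\quadcover{G}}, \max(\theta_{X_G},\theta_{X_{\quadcover{G}}}))$-phased block gluing, hence $\gamma_G = O(\gamma_{\quadcover{G}})$ after absorbing the phase (recall $\gamma_G$ is defined with phase $\theta_{X_G} \le 2$, and allowing a larger phase only decreases the gap function, while $\Theta$-equivalence is unaffected by the additive slack from a bounded shift — one may need Lemma~\ref{lemma.even} or a direct argument to reconcile phases $1$ versus $2$).

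The main obstacle I expect is the bookkeeping around \emph{global admissibility} and \emph{basepoints} in the square cover. Proposition~\ref{prop.square.to.graph} produces a lift in $\quadcover{G}[x_{\textbf{i}_0}]$ with value fixed only at $\textbf{i}_0$; to glue two patterns I must ensure both lifts live in the \emph{same} labeled copy and that the lifted patterns $\tilde p, \tilde p'$ are genuinely globally admissible in $X_{\quadcover{G}}$ (this is immediate: they appear in the lifted configuration, which lies in $X_{\quadcover{G}}$ by construction — so this part is actually fine). The subtler point is that the block-gluing property of $X_{\quadcover{G}}$ only gives us \emph{some} $z \in X_{\quadcover{G}}$; but $\quadcover{G}$ may be a labeled copy indexed differently, and one must invoke the isomorphism between $\quadcover{G}[a]$'s (via the $\overline{\psi}_{a\mapsto b}$) to transport everything consistently, and check that $\eta$ commutes appropriately with these isomorphisms — i.e. that $\eta$ is well-defined on the abstract unlabeled $\quadcover{G}$ up to the identification, which follows from the fact that $\overline\psi_{a\mapsto b}$ corresponds to prepending a vertex and hence shifts $\eta$-values by a graph edge. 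A clean way to avoid most of this is to do everything inside a single fixed labeled copy $\quadcover{G}[a_0]$ from the start, lifting one configuration of $X_G$ that is built to contain both $p$ near $\textbf{u}$ and (a shifted copy of) $p'$ near $\textbf{u}'$ — but since no such joint configuration exists a priori, the two-lift approach with the isomorphism transport seems unavoidable, and that is where I would spend the most care.
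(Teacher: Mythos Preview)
Your approach is in the same spirit as the paper's --- lift via Proposition~\ref{prop.square.to.graph}, use the mixing property upstairs, then project via $\eta$ --- but the paper takes a cleaner route that sidesteps both obstacles you flag. Rather than gluing two-dimensional block patterns directly, the paper first invokes Proposition~\ref{prop:block-gluing} to reduce the statement to $\textrm{diam}(\Delta_G^{2n}) = O(\textrm{diam}(\Delta_{\quadcover{G}}^{2n}))$, and then does the lift--project argument at the level of \emph{walks}: lift a walk $p$ on $G$ to a walk $(\pi_{p_0}\circ\varphi)^{\ast}(p)$ on $\quadcover{G}[p_0]$, connect it in $\Delta_{\quadcover{G}}^{2n}$ to the lift of a spine power $q$ starting at the same vertex $p_0$, and push this path back through $\eta$. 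Both lifted walks automatically live in the same labeled copy $\quadcover{G}[p_0]$, so no isomorphism transport is needed; and since the conclusion is about $d_G$ rather than about block gluing, no phase bookkeeping arises.

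The phase issue you identify at the end is a genuine (if small) gap in your version. If $G$ is non-bipartite then $\theta_{X_G}=1$, while $\quadcover{G}$ is always bipartite and has phase $2$; your argument only shows that $X_G$ is $(\gamma_{\quadcover{G}},2)$-phased block gluing, which does \emph{not} by itself bound the minimal gap at phase $1$. Lemma~\ref{lemma.even} does not repair this. The right fix is exactly Proposition~\ref{prop:block-gluing}: since $\gamma_G \in \Theta(d_G)$ regardless of phase, it suffices to bound $d_G$, and your lift--project argument restricted to one-dimensional patterns (walks) gives $d_G(n) \le \textrm{diam}(\Delta_{\quadcover{G}}^{n}) + O(1)$, the additive constant absorbing the possible shift. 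At that point you are essentially reproducing the paper's proof, so you may as well go through diameters from the start.
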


Note that, since $\quadcover{G}$ is always bipartite, $G$ and $\quadcover{G}$ do not necessarily have the same phase (for block-gluing). Nevertheless, Proposition~\ref{prop:block-gluing} still applies.

\begin{proof}
By Proposition~\ref{prop:block-gluing} and Lemma~\ref{lemma.even}, it is sufficient to prove that 
\[\textrm{diam}(\Delta_G^{2n}) = O\left( \textrm{diam}(\Delta_{\quadcover{G}}^{2n})\right).\]

For all $n$, set $\Lambda_G^{n} = \{s^{n} : s\text{ is a spine on }G\}$. It is straightforward that $\textrm{diam}(\Lambda_G^{n}) \le \textrm{diam}(G)$. It is thus sufficient to prove that
\[\max_{p \in \Delta_G^{2n}} \min_{q \in \Lambda_G^{n}} d_G(p,q) \leq \textrm{diam}(\Delta_{\quadcover{G}}^{2n}),\]
where the left-hand term is related to the Hausdorff distance between $\Delta_G^{2n}$ and $\Lambda_G^{n}$.

We denote $(\pi_{p_0}\circ \varphi)^{\ast}$ the function that, to a walk $p$ on $G$, associates the sequence $(\pi_{p_0}\circ \varphi(p_0 \ldots p_i))_{i \le l(p)}$.

Let $p$ be a walk of length $2n$ on $G$ and $q$ an element of $\Lambda_G^{n}$ which begins on $p_0$. There is a walk $(p^{(i)})_{0\le i\le k}$ of length $k$ in $\Delta_{\quadcover{G} [p_0]}^{2n}$ between $(\pi_{p_0}\circ \varphi)^{\ast} (p)$ and $(\pi_{p_0}\circ \varphi )^{\ast}(q)$ for some $k\le \textrm{diam}(\Delta_{\quadcover{G} [p_0]}^{2n})$. Then the walk $(\eta(p^{(i)}))_{0\le i\le k}$ is a walk in $\Delta_G^{2n}$ from $\eta((\pi_{p_0}\circ \varphi)^{\ast}(p)) = p$ to $\eta((\pi_{p_0}\circ \varphi)^{\ast}(q)
) = q$. 
From this we deduce that $d_G(p, q) \leq \textrm{diam}(\Delta_{\quadcover{G}}^{2n})$. Since $p$ can be chosen arbitrarily and $q$ chosen according to $p$, this implies that $\max_{p \in \Delta_G^{2n}} \min_{q \in \Lambda_G^{n}} d_G(p,q) \leq \textrm{diam}(\Delta_{\quadcover{G}}^{2n})$, and the proposition follows.
\end{proof}

\begin{lemma}\label{lemma:equivalence:walks}
Let us assume that $G$ is square-decomposable. Then 
for all $p,q$ non-backtracking walks on $G$ such that $p_0=q_0$ and $p_{l(p)} = q_{l(q)}$, $p \sim_{\square} q$.
\end{lemma}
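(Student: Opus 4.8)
The plan is to obtain $p\sim_{\square}q$ from two ingredients: the square-decomposability of the single cycle $p\odot q^{-1}$, and a compatibility of $\sim_{\square}$ with concatenation on the right. The point of the second ingredient is that $p\odot q^{-1}$ reduces (via $\varphi$) to a non-backtracking cycle, which is decomposable into squares, i.e. $\sim_{\square}$-equivalent to the empty cycle; appending $q$ to such a decomposition transforms it into a chain relating $p$ and $q$.

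\medskip\noindent\textbf{Step 1 (concatenation lemma).} I would first prove: if $a,b$ are non-backtracking walks with $a_0=b_0$ and $a_{l(a)}=b_{l(b)}$, $a\sim_{\square}b$, and $w$ is a non-backtracking walk with $w_0=a_{l(a)}$, then $\varphi(a\odot w)\sim_{\square}\varphi(b\odot w)$. By transitivity of $\sim_{\square}$ it suffices to handle the case where $a$ and $b$ differ by a square, say $b=\varphi(a\oplus_k s)$ for some square $s$ and some $k\le l(a)$ with $a_k=s_0$. The key elementary observation is that inserting a square at a position $k\le l(a)$ commutes with appending $w$ on the right:
\[
(a\oplus_k s)\odot w \;=\; (a\odot w)\oplus_k s,
\]
valid for every $k\le l(a)$, including the boundary case $k=l(a)$, where $s_0=a_{l(a)}=w_0$. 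Using that spines can be removed in any order (so that $\varphi(u\odot w)=\varphi(\varphi(u)\odot w)$), we then get
\[
\varphi(b\odot w)=\varphi\!\big(\varphi(a\oplus_k s)\odot w\big)=\varphi\!\big((a\oplus_k s)\odot w\big)=\varphi\!\big((a\odot w)\oplus_k s\big),
\]
which by definition differs by a square from $\varphi(a\odot w)$; hence $\varphi(b\odot w)\sim_{\square}\varphi(a\odot w)$. The symmetric case $a=\varphi(b\oplus_k s)$ is identical, and the general $a\sim_{\square}b$ follows by iterating along a $\sim_{\square}$-chain.

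\medskip\noindent\textbf{Step 2 (reduce to a cycle).} Set $c:=p\odot q^{-1}$; since $p_0=q_0$ and $p_{l(p)}=q_{l(q)}$ this is a cycle based at $p_0$. Let $\widetilde c:=\varphi(c)$, a non-backtracking cycle at $p_0$ (possibly empty). Since $G$ is square-decomposable, every cycle of $G$ (simple or not) is decomposable into squares, so by Definition~\ref{definition.decomposable} we have $\widetilde c\sim_{\square}\epsilon$, where $\epsilon$ is the empty cycle at $p_0$.

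\medskip\noindent\textbf{Step 3 (append $q$ and identify).} Apply Step 1 with $a=\widetilde c$, $b=\epsilon$ (both non-backtracking cycles at $p_0$) and $w=q$ (legitimate since $q_0=p_0=\widetilde c_{l(\widetilde c)}$), obtaining $\varphi(\widetilde c\odot q)\sim_{\square}\varphi(\epsilon\odot q)=\varphi(q)=q$. It remains to recognize the left-hand side as $p$: using order-independence of spine removal,
\[
\varphi(\widetilde c\odot q)=\varphi\!\big(\varphi(p\odot q^{-1})\odot q\big)=\varphi(p\odot q^{-1}\odot q)=\varphi\!\big(p\odot\varphi(q^{-1}\odot q)\big),
\]
and $\varphi(q^{-1}\odot q)=\epsilon$ (a walk followed by its reverse reduces to its basepoint), so this equals $\varphi(p)=p$ since $p$ is non-backtracking. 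Therefore $p\sim_{\square}q$, which proves the lemma.

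\medskip\noindent I expect the only genuinely delicate point to be Step 1, namely checking that $\oplus_k$ commutes with right concatenation in all positions (in particular at the endpoint) and bookkeeping the nested $\varphi$'s; this is routine and of the same flavor as the case analysis preceding Corollary~\ref{cor.morphism.quotient}. Everything else is formal manipulation of $\varphi$ and $\odot$. Degenerate cases (e.g. $l(p)=0$, so that $q$ is itself a cycle at $p_0$ and the statement is exactly its square-decomposability) are covered by the same argument.
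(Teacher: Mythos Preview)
Your approach is genuinely different from the paper's. The paper argues by induction on the area of $\varphi(p\odot q^{-1})$: at each step it peels a single square off that cycle, attributing it either to the $p$ side or the $q$ side, and then invokes the induction hypothesis on the modified pair $(p',q')$. You instead factor the argument through a reusable right-concatenation lemma (Step~1) and a single global application of square-decomposability to the cycle $\widetilde c=\varphi(p\odot q^{-1})$. Your route is cleaner in that it separates the combinatorial content (Step~1) from the use of the hypothesis, and Step~1 is essentially the right-sided analogue of the lemma before Corollary~\ref{cor.morphism.quotient}, iterated along $w$.

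There is one genuine gap in your justification of Step~1. You write $\varphi(b\odot w)=\varphi\bigl((a\odot w)\oplus_k s\bigr)$ and then say this ``by definition differs by a square from $\varphi(a\odot w)$''. The definition of ``differ by a square'' requires inserting $s$ into a \emph{non-backtracking} walk, but $a\odot w$ may backtrack at the junction. Concretely, if the cancellation in $a\odot w$ has depth $j$ (i.e.\ $a_{l(a)-i}=w_i$ for $0\le i\le j$) and $k>l(a)-j$, then the position $k$ does not survive in $\varphi(a\odot w)$, and you cannot simply write $\varphi(b\odot w)=\varphi\bigl(\varphi(a\odot w)\oplus_{k'} s\bigr)$ for the same~$s$. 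The conclusion of Step~1 is nonetheless true: one clean way to obtain it is to iterate the (reversed) lemma before Corollary~\ref{cor.morphism.quotient} one letter of $w$ at a time, which shows that appending a single vertex preserves ``differ by a square'' and hence appending all of $w$ preserves $\sim_\square$. Your closing remark correctly identifies this as the delicate point and points to the right case analysis; just be aware that what needs checking is the interaction with backtracking at the junction, not merely the commutation of $\oplus_k$ with $\odot$.
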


\begin{proof}
Let us prove this by induction on the area of the cycle $p \odot q^{-1}$. When this area is equal to $0$, $p$ is equal to $q$ and they are therefore equivalent for $\sim_{\square}$. Let us assume that the statement is proved whenever $p \odot q^{-1}$ has area $\le m$, and fix $p,q$ for which this cycle has area $m+1$. Consider some square $s$ and integer $k$ such that $\varphi((p \odot q^{-1}) \oplus_k s)$ has area $m$. Let us denote by $p'$ and $q'$ the following paths:
\begin{enumerate}
    \item if $k\le l(p)$, $p' = p\oplus_k s$ and $q'=q$;
    \item if $k> l(p)$, $p'=p$ and $q' = q \oplus_{l(q)-(k-l(p))} s^{-1}$.
\end{enumerate}
By induction, $p' \sim_{\square} q'$. Therefore, by definition of $p',q'$, $p \sim q$.
\end{proof}

\begin{proposition}\label{proposition.equal.square}
If $G$ is square-decomposable, $G$ and $\quadcover{G}$ are isomorphic.
\end{proposition}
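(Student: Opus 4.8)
The plan is to exhibit an explicit isomorphism $\quadcover{G}[a] \to \mathcal{U}_G[a]$ for some fixed $a \in V_G$, using square-decomposability to show that the quotient map $\pi_a$ collapses nothing, i.e. that $\sim_\square$ restricted to $V_{\mathcal{U}_G[a]}$ is trivial. The key observation is that two \emph{distinct} non-backtracking walks $p,q$ beginning at the same vertex $a$ can never be equivalent for $\sim_\square$ when $G$ is square-decomposable, so $\pi_a$ is injective on vertices; combined with the fact that $\pi_a$ is a surjective graph morphism, this forces $\pi_a$ to be an isomorphism.

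First I would show $\pi_a$ is injective on vertices. Suppose $p \sim_\square q$ with $p_0 = q_0 = a$, both non-backtracking. Since walks differing by a square share endpoints (already noted in the text when defining $\eta$), transitivity gives $p_{l(p)} = q_{l(q)}$; call this common endpoint $b$. Now apply Lemma~\ref{lemma:equivalence:walks} in the reverse direction: actually the relevant fact is that $p \odot q^{-1}$ is a cycle on $G$, hence (by square-decomposability, extended to all cycles as remarked after Definition~\ref{definition.decomposable}) decomposable into squares; and $p\odot q^{-1}$ is non-backtracking precisely when $p$ and $q$ ``diverge'' at their common endpoint. The crucial point is that since $G$ is bipartite (Lemma~\ref{lemma.even.length}), and $p,q$ are both non-backtracking reduced walks from $a$ to $b$, the reduction $\varphi(p \odot q^{-1})$ is again a non-backtracking cycle, and if $p \neq q$ this reduced cycle is non-trivial. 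One then argues that a non-trivial \emph{reduced} (non-backtracking) cycle cannot be equivalent under $\sim_\square$ to the empty cycle in a way compatible with $p\sim_\square q$ — more precisely, $p \sim_\square q$ would imply $p \odot q^{-1} \sim_\square$ (empty cycle after reduction), but since $p, q$ are already non-backtracking and distinct, $\varphi(p\odot q^{-1})$ is a non-backtracking cycle of positive length in the tree $\mathcal{U}_G$, which is impossible. The honest way to phrase this: in $\mathcal{U}_G[a]$ there is a unique non-backtracking walk from the root to any vertex, so distinct non-backtracking walks from $a$ land on distinct vertices of $\mathcal{U}_G$, and $\sim_\square$ identifies two vertices only if the corresponding reduced walks coincide, i.e. $\sim_\square$ is trivial on $V_{\mathcal{U}_G[a]}$.

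Next, $\pi_a \colon \mathcal{U}_G[a] \to \quadcover{G}[a]$ is by construction a surjective graph morphism, and an injective surjective graph morphism between simple graphs whose inverse also respects edges is an isomorphism; I would check that $\pi_a^{-1}$ sends edges to edges by unwinding the definition of edges in the quotient (an edge between classes $[p], [q]$ comes from an edge between representatives $p', q'$, and by injectivity $p' = p$, $q' = q$ up to the trivial relation). Finally, since $\mathcal{U}_G[a]$ is a labeling of $\mathcal{U}_G$ and $\quadcover{G}[a]$ a labeling of $\quadcover{G}$, the isomorphism $\pi_a$ descends to an isomorphism of the unlabeled graphs $G$... wait — one must be careful: the statement claims $G \cong \quadcover{G}$, not $\mathcal{U}_G \cong \quadcover{G}$. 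So in fact the right target is: square-decomposability should also force $\mathcal{U}_G \cong G$ (via $\eta$), and then compose. Concretely, $\eta \colon \quadcover{G}[a] \to G$ is a graph morphism by the Notation introducing $\eta$; square-decomposability makes it injective (if $\eta(\pi_a(p)) = \eta(\pi_a(q))$ then $p, q$ are non-backtracking walks from $a$ with the same endpoint, hence $p \sim_\square q$ by Lemma~\ref{lemma:equivalence:walks}, so $\pi_a(p) = \pi_a(q)$) and surjective since $G$ is connected; checking $\eta^{-1}$ preserves edges again unwinds definitions.

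The main obstacle I expect is the edge-direction check: proving not just that the bijections exist but that the inverse maps are graph morphisms, i.e. that an edge of $G$ lifts to an edge of $\quadcover{G}[a]$. This requires knowing that for neighbors $u, v$ in $G$ and any non-backtracking walk $p$ from $a$ to $u$, the walk $\varphi(pv)$ is a non-backtracking walk from $a$ to $v$ adjacent to $p$ in $\mathcal{U}_G[a]$, and that this adjacency survives $\pi_a$ — which is immediate — but one must also rule out that two \emph{different} edges of $\quadcover{G}[a]$ project to the same edge of $G$, which is exactly where injectivity of $\eta$ (hence Lemma~\ref{lemma:equivalence:walks}) is used once more. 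I would therefore structure the proof as: (1) $\eta$ is well-defined and a morphism (cited); (2) $\eta$ injective via Lemma~\ref{lemma:equivalence:walks}; (3) $\eta$ surjective via connectedness; (4) $\eta$ is open / its inverse is a morphism; conclude $\eta$ is a graph isomorphism $\quadcover{G} \cong G$.
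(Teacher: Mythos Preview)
Your final structure --- showing that $\eta\colon\quadcover{G}[a]\to G$ is a graph isomorphism via (1) $\eta$ is a morphism, (2) $\eta$ is injective by Lemma~\ref{lemma:equivalence:walks}, (3) $\eta$ is surjective by connectedness, (4) $\eta^{-1}$ is a morphism --- is correct and is exactly the paper's approach, phrased from the other direction: the paper builds the inverse map $a\mapsto\pi_{a_0}(p_a)$ explicitly, checks it is injective (distinct endpoints give distinct classes), surjective (Lemma~\ref{lemma:equivalence:walks} forces every class to equal some $\pi_{a_0}(p_a)$), and then checks edges both ways.

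However, your first two paragraphs contain a genuine error, not just a detour. You claim that $\sim_\square$ is \emph{trivial} on $V_{\mathcal{U}_G[a]}$, i.e.\ that $\pi_a$ is injective. This is false whenever $G$ actually contains a square: if $abcda$ is a square in $G$, the walks $abc$ and $adc$ are distinct non-backtracking walks from $a$, hence distinct vertices of $\mathcal{U}_G[a]$, yet they differ by a square and are identified by $\pi_a$. Indeed, had your initial claim been true you would have proved $\quadcover{G}\cong\mathcal{U}_G$, which is an infinite tree as soon as $G$ has any cycle --- contradicting the very statement you want, since $G$ is finite. You noticed the target mismatch (``wait --- one must be careful'') but did not diagnose that the underlying assertion was false rather than merely off-target; the first half should simply be discarded.
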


\begin{proof}
It is sufficient to see that in this case $G$ is 
isomorphic to $\quadcover{G}[a_0]$ for $a_0$ some vertex  of $G$. For every $a$ in $G$, choose some non-backtracking simple walk $p_a$ from $a_0$ to $a$. The elements of $\pi_{a_0}(p_a)$ are walks which begin at $a_0$ and end at $a$. As a consequence, all the classes $\pi_{a_0}(p_a)$, $a \in V_G$ are different. Furthermore, because $G$ is square-decomposable, Lemma~\ref{lemma:equivalence:walks} implies that 
every equivalence class is equal to some $\pi_{a_0}(p_a)$. 

Furthermore for all $a,b$ there is an edge between $a$ and $b$ in $G$ if and only if there is an edge between 
$\pi_{a_0}(p_a)$ and $\pi_{a_0}(p_b)$ in $\quadcover{G}$, which yields the statement.
\end{proof}

\begin{proposition}\label{proposition.square.cover.decomposable}
The square cover of $G$ is square-decomposable.
\end{proposition}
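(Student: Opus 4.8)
The plan is to fix a vertex $a\in V_G$ and prove that every non-backtracking simple cycle of $U:=\quadcover{G}[a]$ is decomposable into squares; since the graphs $\quadcover{G}[b]$, $b\in V_G$, are all isomorphic this gives the statement for $\quadcover{G}$ (and a square decomposition requires its starting cycle to be non-backtracking, so this is the only case to treat). I would first record two preliminary observations. First, $U$ is bipartite: the parity of the length of a walk is preserved by $\sim_\square$, because inserting a square adds $4$ to the length and $\varphi$ removes spines two letters at a time; hence a non-backtracking simple cycle of $U$ has even length $\ge 4$. Second, $\sim_\square$ is invariant under reversal — reversing $p\oplus_k s$ produces $p^{-1}\oplus_{l(p)-k}s^{-1}$, and the reverse of a square is a square — so, combining this with Corollary~\ref{cor.morphism.quotient}, the relation $\sim_\square$ is compatible with \emph{appending} a letter: if $x\sim_\square y$ are non-backtracking walks ending at the same vertex $b$ and $d$ is a neighbour of $b$, then $\varphi(xd)\sim_\square\varphi(yd)$.

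From appending compatibility I would deduce a path-lifting property for the projection $\pi_a\colon\mathcal{U}_G[a]\to U$: given a walk $(v^{(0)},\dots,v^{(L)})$ in $U$ and any representative $w^{(0)}$ of $v^{(0)}$, the walks $w^{(i+1)}:=\varphi\!\big(w^{(i)}\,\eta(v^{(i+1)})\big)$ are non-backtracking, consecutive ones are adjacent in $\mathcal{U}_G[a]$, and $\pi_a(w^{(i)})=v^{(i)}$ — the last point being where appending compatibility is used, after checking how an edge of $U$ lifts to either an "append" or a "delete last letter" in the tree. Now let $\Gamma=(v^{(0)},\dots,v^{(L)})$ be a non-backtracking simple cycle of $U$ and let $W=(w^{(0)},\dots,w^{(L)})$ be such a lift. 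Since $\Gamma$ is simple with $L\ge4$, $W$ has no backtracking, hence is a simple path of $\mathcal{U}_G[a]$, i.e. the unique geodesic from $w^{(0)}$ to $w^{(L)}$ in that tree; and since $v^{(L)}=v^{(0)}$ we have $w^{(0)}\sim_\square w^{(L)}$, so there is a chain $w^{(0)}=u^{(0)},u^{(1)},\dots,u^{(N)}=w^{(L)}$ in which consecutive terms differ by a single square of $G$.

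It then suffices to prove, by induction on $N$, that $\varphi\!\big(\pi_a(P)\big)$ is decomposable into squares, where $P$ is the geodesic from $u^{(0)}$ to $u^{(N)}$ in $\mathcal{U}_G[a]$: applied to $W$ this yields the conclusion, because $\varphi(\pi_a(W))=\varphi(\Gamma)=\Gamma$ as $\Gamma$ is non-backtracking. The induction rests on three elementary closure properties, each valid in any graph and proved by the same "remove spines in any order" manipulations used in the excerpt: a square $\sigma$ is square-decomposable, since $\varphi(\sigma\oplus_0\sigma^{-1})$ is an empty cycle; if $\beta$ is a square-decomposable closed walk at $v$ and $\alpha$ is any walk ending at $v$, then $\varphi(\alpha\odot\beta\odot\alpha^{-1})$ is square-decomposable; and if $\alpha',\beta'$ are square-decomposable closed walks at a common vertex then so is $\varphi(\alpha'\odot\beta')$. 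Granting these, the inductive step writes the geodesic from $u^{(0)}$ to $u^{(N)}$ as the reduction of the concatenation of the geodesics from $u^{(j)}$ to $u^{(j+1)}$, projects by $\pi_a$ (which commutes with $\varphi$ and with concatenation), and combines the induction hypothesis with the case $N=1$.

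The case $N=1$, which underlies the whole induction, is the main obstacle: one must show $\varphi(\pi_a(P))$ is square-decomposable when $P$ is the tree geodesic between two walks $x\sim_\square z$ differing by a single square $s$, with $z=\varphi(x\oplus_k s)$. In the generic situation, where $x\oplus_k s$ is already non-backtracking, one checks that $P$ passes through the common prefix $x_0\dots x_k$ of $x$ and $z$ and that its $\pi_a$-image is a "lollipop" $\alpha\odot\sigma\odot\alpha^{-1}$: here $\alpha$ runs from $\pi_a(x)$ up to $\pi_a(x_0\dots x_k)$, and $\sigma=\big(\pi_a(x_0\dots x_k),\,\pi_a(x_0\dots x_k s_1),\,\pi_a(x_0\dots x_k s_1 s_2),\,\pi_a(x_0\dots x_k s_1 s_2 s_3),\,\pi_a(x_0\dots x_k)\big)$ is a genuine square of $U$, because $x_0\dots x_k s_1 s_2 s_3 x_k=(x_0\dots x_k)\oplus_k s\sim_\square x_0\dots x_k$ (closing the cycle) and the four classes have the four distinct endpoints $x_k,s_1,s_2,s_3$, while the returning branch $\alpha^{-1}$ is recovered from appending compatibility. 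The first two closure properties then finish this case. The residual work is the non-generic case: when the $\varphi$ hidden in "differ by a square" triggers further spine removals, these only create additional backtracks after projection to $U$, and — tracking them exactly as in the proof that $\overline\psi_{a\mapsto b}$ is well defined — the projected geodesic still reduces in $U$ to a conjugate of a single square or to an empty cycle. A final check is that nothing above is circular: every ingredient uses only the tree $\mathcal{U}_G[a]$ and the relation $\sim_\square$ between walks of $G$, never the conclusion that $U$ is square-decomposable.
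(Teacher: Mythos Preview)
Your argument is correct in outline, and the ``residual work'' you flag in the non-generic $N=1$ case does go through with the kind of case analysis you sketch (in each of the three Figure~\ref{figure.differ} situations the projected tree geodesic is a conjugate in $U$ of a genuine square, or trivial). But your route is considerably heavier than the paper's.

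The paper works entirely on the $G$-side via the endpoint map $\eta$. Given a non-backtracking cycle $c$ on $\quadcover{G}$, it first shows that $\eta(c)$ is a non-backtracking cycle on $G$ which is itself square-decomposable: choosing any representative $p\in c_0$, one has $\varphi(p\odot\eta(c))\sim_\square p$ (this is exactly your appending-compatibility observation iterated along $c$), and then repeated application of Corollary~\ref{cor.morphism.quotient} strips off $p$ and yields $\eta(c)\sim_\square$ (trivial). It then defines a one-shot section $\eta^{-1}_{c_0}$ sending a walk $w$ on $G$ starting at $\eta(c_0)$ to the walk on $\quadcover{G}$ whose $i$-th vertex is the class of $\varphi(w_0\ldots w_i)$, and checks that this section sends the square decomposition $(q^{(i)})$ of $\eta(c)$ in $G$ to a square decomposition of $c$ in $\quadcover{G}$: the only point is that a square $s$ of $G$ lifts to a square $\eta^{-1}_v(s)$ of $\quadcover{G}$, which is immediate since the two endpoints of $\eta^{-1}_v(s)$ differ by $s$ itself.

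So the key fact ``squares of $G$ lift to squares of $\quadcover{G}$'' is common to both arguments; the difference is packaging. You lift to the tree $\mathcal{U}_G[a]$, reason about geodesics there, set up an induction on the length of a $\sim_\square$-chain, and invoke three closure properties of square-decomposability to glue the pieces; the paper never leaves the level of $G$ and $\quadcover{G}$, needs no induction, and handles all cases of ``differ by a square'' uniformly through the section $\eta^{-1}_{c_0}$. Your approach buys a more explicit picture of what the decomposition looks like in $U$ (lollipops), at the cost of the non-generic case analysis; the paper's buys brevity and avoids that case split altogether.
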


\begin{proof}
We will use the following: if $c$ is a cycle on $\quadcover{G}$, then $\eta(c)$ is a cycle, and $c$ has a backtrack iff $\eta(c)$ has a backtrack as well.


Take any vertex $v$ of $\quadcover{G}$. By repeated application of morphisms $\overline{\psi}$, we can choose a labeling of $\quadcover{G}$ so that $v$ is the class for $\sim_\square$ of the empty walk.
Let $w$ be a walk on $G$ such that $w_0 = \eta(v)$. We denote by $\eta^{-1}_v(w)$ the walk such that for all $i \le l(w)$:
\[\eta^{-1}_v(w)_i\text{ is the class for }\sim_\square\text{ of the walk } \varphi(w_0\dots w_i).\]
It is straightforward that for any walk $w'$ on $\quadcover{G}$, $\eta^{-1}_{w'_0}\circ\eta(w') = w'$.



Let us turn to the proof. Consider a cycle $c$ on $\quadcover{G}$. We will prove that it is square-decomposable.
\begin{enumerate}
\item \textbf{The cycle $\eta(c)$ is square-decomposable:}

Since $c$ is a non-backtracking cycle, $\eta(c)$ is also a non-backtracking cycle. 
Furthermore, since $c_0 = c_{l(c)}$, there exists $p$ in $c_0$ such that $\varphi(p \eta(c)) \in c_0$, therefore 
$\varphi(p \eta(c)) \sim_{\square} p$.
Then we can use Corollary~\ref{cor.morphism.quotient} repeatedly to have 
\[\psi_{p_{l(p)-1} \mapsto p_{l(p)}} \circ \cdots \circ \psi_{p_0 \mapsto p_1} (\varphi(p \eta(c))) \sim_{\square} \psi_{p_{l(p)-1} \mapsto p_{l(p)}} \circ \cdots \circ \psi_{p_0 \mapsto p_1} (p).\]

This is rewritten into $\eta(c) \sim_{\square} p_{l(p)}$ (empty cycle), which means that $\eta(c)$ is square-decomposable. Let us consider $(q^{(i)})_{0\le i\le l}$ a square decomposition of $\eta(c)$.

\item \textbf{The sequence $(\eta^{-1}_{c_0}(q^{(i)}))_{0\le i\le l}$ is a square decomposition of $c$:} 

It is clear that $\eta^{-1}_{c_0}(\eta(c)) = c$ and $\eta^{-1}_{c_0}(\eta(c_0)) = c_0$ (empty cycle). Therefore, it is sufficient to prove that for all $i$ such that $0\leq i < l$, $\eta^{-1}_{c_0}(q^{(i)})$ and $\eta^{-1}_{c_0}(q^{(i+1)})$ differ by a square.

Considering such an integer $i$, we can assume that $q^{(i+1)} = q^{(i)} \oplus_k s$ for some square $s$ and index $k$ (the other possible case is processed similarly). For any vertex $v$ of $\quadcover{G}$ such that $\eta(v) = s_0$, we have that $\eta^{-1}_{v}(s)$ is a square in $\quadcover{G}$. Indeed, it is non-backtracking, and it is a cycle since $v$ and $v\odot s$ differ by a square.

It is straightforward to check that $\eta^{-1}_{c_0}(q^{(i+1)}) = \eta^{-1}_{c_0}(q^{(i)}) \oplus_k \eta^{-1}_{v}(s)$, where $v =  \eta^{-1}_{c_0}(q^{(i)})_k$. Since $\eta^{-1}_{v}(s)$ is a square, we have that $\eta^{-1}_{c_0}(q^{(i)})$ and $\eta^{-1}_{c_0}(q^{(i+1)})$ differ by a square. This concludes the proof.\qedhere
\end{enumerate}
\end{proof}

\subsection{ When $|\quadcover{G}| = +\infty$, the gap function is linear\label{section.square.cover.infinite}}

\cite{CM18} works in the context of four-cycle-hom-free graphs. A graph is four-cycle hom-free if and only if it has no non-backtracking cycle of length four, which means that the universal cover and the square cover are equal. Indeed, in this case the square cover is the quotient of the universal cover by an empty set of relations.

In this context, Corollary 5.6 in \emph{op.cit.} restated in our notation is:

\begin{theorem}[\cite{CM18}]\label{thm-chandgotia}
Let $G$ be a four-cycle-hom-free graph. $\gamma_G = \Theta(1)$ if and only if $\mathcal{U}_{G}$ is finite.
\end{theorem}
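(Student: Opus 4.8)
By Proposition~\ref{prop:block-gluing}, $\gamma_G\in\Theta(d_G)$ with $d_G(n)=\mathrm{diam}(\Delta_G^n)$, so Theorem~\ref{thm-chandgotia} amounts to: $\sup_n\mathrm{diam}(\Delta_G^n)<+\infty$ if and only if $\mathcal U_G$ is finite. The role of the hypothesis is that a four-cycle-hom-free graph has no square, so $\sim_\square$ is trivial and $\quadcover{G}=\mathcal U_G$; hence every statement of Section~\ref{sec:sqcover} applies with the square cover replaced by the universal cover. I will also use that $\mathcal U_G$ is by construction a simple graph without cycles and without self-loops, i.e.\ a tree, and that it is locally finite because $G$ is finite. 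Fixing $a\in V_G$, write $h$ for the distance to the root $[a]$ in $\mathcal U_G[a]$; since $\mathcal U_G[a]$ is a simple tree, $h$ changes by exactly $1$ across every edge.

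\textbf{If $\mathcal U_G$ is finite.} By Proposition~\ref{proposition.square.to.gamma} (with $\quadcover{G}=\mathcal U_G$), $\gamma_G=O(\gamma_{\mathcal U_G})$, so it suffices to prove that $\gamma_T=O(1)$, i.e.\ $\sup_n\mathrm{diam}(\Delta_T^n)<+\infty$, for $T$ a finite simple tree. I would do this by showing that every walk $p\in L_T^n$ can be steered, in a number of $\Delta_T^n$-steps bounded only in terms of $\mathrm{diam}(T)$, to a fixed reference walk: one combines left/right shifts (single steps, by Remark~\ref{remark.shift.neighbors}) with column-reflections to fold $p$ onto a spine-power walk carried by one fixed edge of $T$, then concludes with the inequality $\mathrm{diam}(\Lambda_T^n)\le\mathrm{diam}(T)$ on the sets $\Lambda_T^n$ introduced in the proof of Proposition~\ref{proposition.square.to.gamma}. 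The delicate point is that column-reflections alone are rigid in a tree --- along any column a local maximum of $h$ is forced to be a backtrack $p_{i-1}=p_{i+1}$, which cannot be pushed down by a single coordinate change --- so the shifts are genuinely needed; this bookkeeping is the technical cost of this direction.

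\textbf{If $\mathcal U_G$ is infinite.} Then $\mathcal U_G[a]$, being an infinite locally finite tree, has vertices at every depth $n$, i.e.\ $G$ carries a non-backtracking walk $p$ of length $n$ starting at $a$ (otherwise $\mathcal U_G[a]$ would be finite). Take $q$ to be a spine-power walk of even length $n$ at $a$; both $p$ and $q$ are globally admissible by Lemma~\ref{lem:localglobal}. If $d_G(p,q)=+\infty$ then $\mathrm{diam}(\Delta_G^n)=+\infty$ and we are done, so assume $d_G(p,q)<+\infty$ and pick a shortest path $p=p^{(0)},\dots,p^{(k)}=q$ in $\Delta_G^n$ with $k=d_G(p,q)$. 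By Lemma~\ref{lem:localglobal} this path splices into a globally admissible pattern $w$ on $\llbracket 0,k\rrbracket\times\llbracket 0,n\rrbracket$ whose $i$-th column is $p^{(i)}$; extend it to some $x\in X_G$ and lift to $z\in X_{\mathcal U_G[a]}$ by Proposition~\ref{prop.square.to.graph}, based at $(0,0)$. Reading $z$ along the top row: $z_{(0,n)}=\varphi(p)=p$, so $h(z_{(0,n)})=n$; and $z_{(k,n)}=\varphi\big((\text{bottom row of }w)\odot q\big)=\varphi(\text{bottom row of }w)$, because $q$ is a trivial loop and removing its spines first does not affect $\varphi$, so $h(z_{(k,n)})=l(\varphi(\text{bottom row of }w))\le k$. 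Since $z$ restricted to the top row is a walk in the tree $\mathcal U_G[a]$, $h$ changes by exactly $1$ at each step, so $|n-h(z_{(k,n)})|\le k$ and hence $k\ge n/2$. Therefore $\mathrm{diam}(\Delta_G^n)\ge n/2$, so $\gamma_G=\Omega(n)$ and in particular $\gamma_G\ne\Theta(1)$.

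\textbf{Conclusion and main obstacle.} The two implications give Theorem~\ref{thm-chandgotia}, with the bonus that ``$\gamma_G\ne\Theta(1)$'' is upgraded to ``$\gamma_G=\Omega(n)$''. I expect the main obstacle to be the finite-tree estimate in the first direction: in contrast with the infinite case, where a single height invariant does all the work, bounding $\mathrm{diam}(\Delta_T^n)$ uniformly in $n$ requires carefully interleaving column moves and shifts, precisely because column moves on a tree are too rigid on their own. A secondary point is to make the lift in the second direction fully rigorous --- in particular the identity $z_{(k,n)}=\varphi(\text{bottom row of }w)$, which is where the covering-space formalism of Section~\ref{section.decomposability} and the hypothesis $\quadcover{G}=\mathcal U_G$ are used.
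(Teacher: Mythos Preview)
The paper does not prove this theorem: it is quoted from \cite{CM18} and then generalised in the paper's own Theorems~\ref{thm:main1} and~\ref{theorem.square.dismantable.to.log}. So there is no paper proof to compare against line by line; I will assess your argument on its own and against the paper's generalisations.

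\textbf{Infinite direction.} This is correct, and it is precisely the paper's proof of Theorem~\ref{thm:main1} specialised to the situation $\quadcover{G}=\mathcal U_G$. Your height-function estimate $k\ge n/2$ matches the paper's triangular-inequality computation verbatim. (The case distinction ``if $d_G(p,q)=+\infty$'' is unnecessary: $\Delta_G^n$ is connected whenever $G$ is.)

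\textbf{Finite direction.} The reduction via Proposition~\ref{proposition.square.to.gamma} to ``$\gamma_T=O(1)$ for $T$ a finite simple tree'' is sound. But the tree estimate is \emph{easier} than you suggest, and your diagnosis of the difficulty is off. A step in $\Delta_T^n$ changes \emph{every} coordinate simultaneously to a neighbour; it is not a single-coordinate move, so the ``rigidity'' you describe (a local maximum of $h$ being a backtrack that cannot be pushed down by a single coordinate change) is not the relevant obstruction, and shifts are not needed.

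Here is the clean argument. Fix a leaf $\ell$ of $T$ with unique neighbour $\ell'$, and define $f:V_T\to V_T$ by $f(\ell)=\ell'$, $f(\ell')=\ell$, and $f(v)=$ the neighbour of $v$ on the unique path from $v$ to $\ell$ otherwise. One checks directly that $f$ is a graph homomorphism $T\to T$. Hence for any walk $p\in L_T^n$, the walk $f\circ p$ is a neighbour of $p$ in $\Delta_T^n$. After at most $\mathrm{diam}(T)$ applications of $f$, every vertex of $T$ has been mapped into $\{\ell,\ell'\}$, so every walk has been sent to one of the two spine-powers on the edge $\{\ell,\ell'\}$; these two are themselves neighbours. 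This gives $\mathrm{diam}(\Delta_T^n)\le 2\,\mathrm{diam}(T)+1$ uniformly in $n$, with no shifts and no bookkeeping.

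In short: both directions of your proposal are correct, the second coincides with the paper's Theorem~\ref{thm:main1}, and the only defect is that you overestimated the work in the first direction by misreading what a single step in $\Delta_T^n$ does.
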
 

Starting from this section, we extend this work by characterising the equivalence classes of $\gamma_G$ for general graphs by using the square cover. While Chandgotia and Marcus' result holds in any dimension, we restrict our attention to dimension two for the reasons mentioned in the introduction.  However, the proof of the next theorem specifically can be easily extended in any dimension.

We generalise the ``infinite'' case of Theorem~\ref{thm-chandgotia} as follows:

\begin{theorem}\label{thm:main1}
If the square cover $\quadcover{G}$ of $G$ is infinite, then $\gamma_G(n) = \Theta(n)$. 
\end{theorem}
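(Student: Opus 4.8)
The lower bound $\gamma_G(n) = \Omega(n)$ always holds trivially if $G$ is infinite or disconnected, but the real content is: assuming $\quadcover{G}$ is infinite (equivalently, since $\quadcover{G}$ is connected and is a tree modulo the square-collapsing — actually by Proposition~\ref{proposition.square.cover.decomposable} it is square-decomposable, so it is bipartite and its only cycles come from squares that have already been quotiented out, hence $\quadcover{G}$ has infinite diameter), we must produce, for each $n$, a pair of block patterns of size $n$ that can only be glued at horizontal distance $\Omega(n)$. The plan is to lift to the square cover: by Proposition~\ref{prop.square.to.graph}, any $x \in X_G$ lifts to $z \in X_{\quadcover{G}}$ with $\eta(z) = x$; conversely, projecting $\eta$ on configurations of $X_{\quadcover{G}}$ gives configurations of $X_G$. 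The gluing obstruction in $X_G$ is then inherited from a gluing obstruction in $X_{\quadcover{G}}$, and the latter is easy because $\quadcover{G}$ behaves like a tree: two faraway vertices of $\quadcover{G}$ force a long path to connect them.

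\textbf{Key steps.} First I would record that since $\quadcover{G}$ is infinite and connected, $\operatorname{diam}(\quadcover{G}) = +\infty$, so one can pick two vertices $u, v$ at distance $\geq N$ for any target $N$; more precisely I would pick a non-backtracking walk in $\quadcover{G}$ of length exactly $N$, which (as $\quadcover{G}$ is square-decomposable, hence has no "shortcut" cycles beyond squares, but squares still allow some detours) witnesses that the only walks from $u$ to $v$ in $\quadcover{G}$ have length $\geq N - O(1)$. Second, I would build from this two block patterns $P, P'$ of size $n \approx N$ in $X_{\quadcover{G}}$: say $P$ is the pattern that is constant in columns equal to a fixed non-backtracking walk $p$ in $\quadcover{G}$ whose terminal vertex is $u$, and $P'$ similarly with terminal vertex $v$; concretely one takes vertical strips realising constant walks as in $\Lambda$-type patterns. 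Third, and this is the heart: if $x \in X_{\quadcover{G}}$ contains $P$ at position $\mathbf{u}$ and $P'$ at position $\mathbf{u}'$ with horizontal gap $k$, then reading along a horizontal line in $x$ gives a walk in $\Delta_{\quadcover{G}}^{n}$ — or more simply, reading a single row gives a walk in $\quadcover{G}$ — from a vertex "near $u$" to a vertex "near $v$", of length exactly $k + O(n)$ accounting for the widths of the patterns; since any such walk in $\quadcover{G}$ has length $\geq N - O(1) \geq n - O(1)$, we get $k \geq n - O(1) = \Omega(n)$. Fourth, I would transfer this to $X_G$: project $P, P'$ down via $\eta$ to patterns $\eta(P), \eta(P')$ in $X_G$ of size $n$ (these are globally admissible by Proposition~\ref{prop.square.to.graph} applied to suitable configurations, or directly), and observe that any gluing in $X_G$ lifts, by Proposition~\ref{prop.square.to.graph}, to a gluing in $X_{\quadcover{G}}$ of (lifts of) the patterns — but here one must be careful because the lift of $P$ need not be $P$ itself, only $\eta$ of the lift equals $\eta(P)$. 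So I would instead argue: the lift $z$ of the glued $x$ places, over the occurrence of $\eta(P)$, some pattern $Q$ with $\eta(Q) = \eta(P)$; since $\eta$ restricted to the fibre over a fixed starting vertex is injective on $\sim_\square$-classes of non-backtracking walks, $Q$ is "essentially" a translate of $P$ in $\quadcover{G}[a]$ for the appropriate labelling $a$, and similarly over $\eta(P')$; then the row-reading argument in $\quadcover{G}$ forces the horizontal gap to be $\Omega(n)$.

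\textbf{Main obstacle.} The delicate point is the transfer from $X_G$ to $X_{\quadcover{G}}$ in the last step, i.e. controlling how an occurrence of $\eta(P)$ in $x \in X_G$ lifts: the lift depends on the choice of base vertex and base labelling of $\quadcover{G}$, and the two patterns $\eta(P)$ and $\eta(P')$ sit in $x$ with their own local lifts which need not share a common labelling a priori. The resolution is that a single configuration $x$ lifts \emph{globally and consistently} to one $z \in X_{\quadcover{G}[x_{\mathbf{i}_0}]}$ (Proposition~\ref{prop.square.to.graph}), so both occurrences are lifted within the same copy of $\quadcover{G}$, and then the distance in $\quadcover{G}$ between the lifted terminal vertices is at least (gap) $- O(n)$ while also being at least $N - O(1)$ because the lift of $P$ must be a genuine long non-backtracking object in $\quadcover{G}$ whenever $\eta(P)$ was chosen to be the projection of such an object. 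So the construction of $P, P'$ must be done carefully enough that \emph{every} pattern $Q$ in $X_{\quadcover{G}}$ with $\eta(Q) = \eta(P)$ is itself "long" in $\quadcover{G}$ — this is where square-decomposability of $\quadcover{G}$ (Proposition~\ref{proposition.square.cover.decomposable}) and the injectivity of $\eta$ on $\sim_\square$-classes (built into the definition via $\eta(\pi_a(p)) = p_{l(p)}$ together with Lemma~\ref{lemma:equivalence:walks}-type rigidity) are used. I expect the bulk of the write-up to be this bookkeeping; the geometric idea (infinite tree-like cover $\Rightarrow$ linear gluing gap) is exactly the "infinite" half of Theorem~\ref{thm-chandgotia}, now run through $\eta$ instead of directly through $\mathcal{U}_G$.
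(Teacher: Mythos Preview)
Your overall strategy---lift a putative gluing in $X_G$ to $X_{\quadcover{G}}$ via Proposition~\ref{prop.square.to.graph} and exploit that $\quadcover{G}$ has infinite diameter---is exactly the paper's idea. But the paper's execution is far shorter than the ``bookkeeping'' you anticipate, because of two simplifications you miss.

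First, the paper does not compare block patterns at all. By Proposition~\ref{prop:block-gluing} it suffices to lower-bound $\mathrm{diam}(\Delta_G^{2n})$, i.e.\ to exhibit two \emph{walks} $u,v$ of length $2n$ with $d_G(u,v)\ge n$. So the argument takes place entirely on two horizontal rows of a single configuration, not on two square patterns.

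Second, and this is what dissolves your ``main obstacle'': the paper takes $u$ to be $\eta$ of a length-$2n$ walk in $\quadcover{G}[a]$ from $\pi_a(a)$ to a vertex at distance $2n$, and takes $v$ to be a \emph{repeated spine} $(a a_2)^n a$. Now suppose $x\in X_G$ has row $0$ equal to $u$ and row $k$ equal to $v$. Lift $x$ with basepoint $(0,0)$, so $z_{0,0}=\pi_a(a)$. Because the basepoint sits on row $0$, the lift of that row is forced: $z_{2n,0}$ is the far vertex, at distance $2n$ from $z_{0,0}$. For row $k$ you do not know $z_{0,k}$, but you do not need to: since $v$ backtracks completely, following it from \emph{any} starting vertex returns to that vertex, so $z_{0,k}=z_{2n,k}$ regardless. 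The triangle inequality along the rectangle $(0,0)\to(0,k)\to(2n,k)\to(2n,0)$ gives $2n\le k+0+k$, hence $k\ge n$. Done.

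Your worry that the lift of $\eta(P)$ might not be $P$ itself is legitimate in your two-long-pattern setup (you would need that $\eta:\quadcover{G}\to G$ is a covering map and that deck transformations are isometries, which is true but not developed in the paper). The paper's spine trick sidesteps it entirely: one walk is anchored by the basepoint choice, and the other collapses no matter where its lift starts. There is no need for Proposition~\ref{proposition.square.cover.decomposable}, Lemma~\ref{lemma:equivalence:walks}, or any injectivity argument.
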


\begin{proof}
Let us fix some vertex $a$ of $G$.
Since $\quadcover{G}$ is infinite and the degree of every vertex is less than $|V_G|$, for all $n$ there exists a path $p_n$ on $G$ such that $\pi_a(p_n)$ is at distance $2n$ from $\pi_a(a)$ in $\quadcover{G}[a]$. Let $a = a_0, \ldots , a_{2n}$ be walks on $G$ such that $\pi_a (a_0) \cdots \pi_a (a_{2n})$ is a walk (of length $2n$) from $\pi_a(a)$ to $\pi_a(p_n)$ in $\quadcover{G}[a]$. We denote $u := a w_1 \cdots w_{2n}$ its image for $\eta$, and $v:= (a w_1)^{n} a$. Let us prove that $d_{G} (u,v)\geq n$. 

Take some $x \in X_G$ such that $x|_{\llbracket 0, 2n \rrbracket \times \{0\}} = u$ and $x|_{\llbracket 0, 2n \rrbracket \times \{k\}} = v$ for some $k<n$. We know by Proposition~\ref{prop.square.to.graph} that there exists a unique configuration $z$ of $X_{\quadcover{G}}$ such that $z_{0,0} 
= \pi_a (a)$ and $\eta(z_{\textbf{i}}) = x_{\textbf{i}}$ for all $\textbf{i} \in \mathbb{Z}^2$. As a consequence, $z_{2n,0}$ is the class $\pi_a (p_n)$, and since $\pi_a (v)=\pi_a (a)$ (empty cycle), $z_{2n,k} = z_{0,k}$. By triangular inequality, 
\[d_{\quadcover{G}} (z_{0,0}, z_{2n,0}) \leq d_{\quadcover{G}} (z_{0,0}, z_{0,k}) + d_{\quadcover{G}} (z_{0,k}, z_{2n,k}) + d_{\quadcover{G}} (z_{2n,k}, z_{2n,0}) \leq 2k.\] 

It follows that $k \geq n$. We conclude that $\gamma_G(2n) \ge n$ for all $n \ge 1$, which implies that 
$\gamma_G(n) = \Theta(n)$.
\end{proof}

\begin{figure}
\begin{center}
$G: $
\begin{tikzpicture}[every node/.style={draw,circle, minimum size=.6cm, inner sep=0pt}, baseline={([yshift=-.5ex]current bounding box.center)}]
\node (a) at (0,0) {a};
\node (b) at (1.6,0) {b};
\node (c) at (.8,.8) {c};
\draw (a) -- (b) -- (c) -- (a);
\end{tikzpicture}

\begin{tikzpicture}[every node/.style={}, baseline={([yshift=-.5ex]current bounding box.center)}, scale=0.9]
\draw[step = .8, gray, thin] (0,0) grid (4.8,4);
\node[fill=blue!20!white] at (.4, 3.6) {a};
\node[fill=blue!20!white] at (1.2, 3.6) {b};
\node[fill=blue!20!white] at (2, 3.6) {c};
\node[fill=blue!20!white] at (2.8, 3.6) {a};
\node[fill=blue!20!white] at (3.6, 3.6) {b};
\node[fill=blue!20!white] at (4.4, 3.6) {c};
\node[fill=red!20!white] at (.4, 0.4) {a};
\node[fill=red!20!white] at (1.2, 0.4) {b};
\node[fill=red!20!white] at (2, 0.4) {a};
\node[fill=red!20!white] at (2.8, 0.4) {b};
\node[fill=red!20!white] at (3.6, 0.4) {a};
\node[fill=red!20!white] at (4.4, 0.4) {b};
\end{tikzpicture}
$\xrightarrow{\text{lift to $X_{\quadcover{G}}$}}$
\begin{tikzpicture}[every node/.style={},baseline={([yshift=-.5ex]current bounding box.center)}, scale=0.9]

\draw[step = .8, gray, thin] (0,0) grid (4.8,4);
\node at (.4, 3.6) {0};
\node at (1.2, 3.6) {1};
\node at (2, 3.6) {2};
\node at (2.8, 3.6) {3};
\node at (3.6, 3.6) {4};
\node at (4.4, 3.6) {5};

\node at (.4, 0.4) {$k$};
\node[scale = 0.8] at (1.2, 0.4) {$k\pm 1$};
\node at (2, 0.4) {$k$};
\node[scale = 0.8] at (2.8, 0.4) {$k\pm 1$};
\node at (3.6, 0.4) {$k$};
\node[scale = 0.8] at (4.4, 0.4) {$k\pm 1$};
\path[->, thick] (0,4.2) edge node[midway, above] {$2n$} (4.8, 4.2);
\path[<-, thick] (-.2, -.2) edge (-.2, 4.2);
\path[->, thick] (5, -.2) edge node[midway, right] {$\geq n$}(5, 4.2);
\path[->, thick] (-.2,-.2) edge node[midway, below] {$\approx 0$} (5, -.2);
\end{tikzpicture}
\end{center}
\caption{Illustration of the proof of Theorem~\ref{thm:main1} on an example.}
\end{figure}
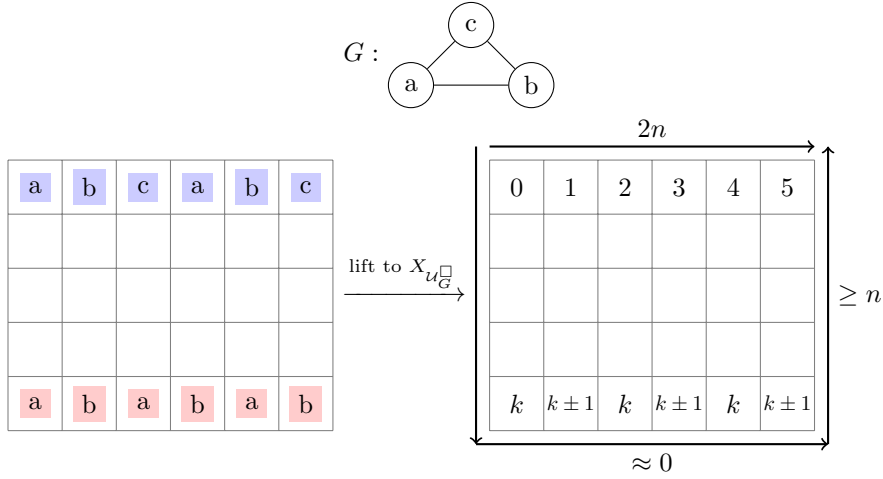

\section{Writing cycles as trees of simple cycles\label{section.cactus}}
We use a representation of cycles on $G$ as rooted finite ordered trees whose vertices are labeled with simple cycles of $G$ satisfying some conditions that allow to 'glue' them together, that we call \emph{cacti}. This construction is used when proving that some Hom shifts have logarithmic gluing gap (Theorem~\ref{theorem.square.dismantable.to.log}) by applying some transformations "in parallel" to different parts of a large cycle.

\subsection{Definition of a cactus\label{section.cactus.rep}}

Here is an inductive definition of a cactus. Let us recall (Notation~\ref{notation.simple.cycle}) that $C_G^0$ denotes the set of simple cycles on $G$.

\begin{definition}
For $n \geq 1$, a \textbf{cactus of depth} $\boldsymbol{n}$ is a triple $C=(\xi,s,\chi)$ such that $\xi \in \mathcal{C}^0_G$, $s=(s_1, \dots , s_d)$ is a sequence of cacti whose depth is strictly less than $n$, and $\chi = (\chi_1, \dots , \chi_d)$ is a non-decreasing sequence of non-negative integers such that: 

\begin{itemize}
\item For all $i$, $\chi_i \le l(\xi)$ and $\xi(s_i)_0 = \xi_{\chi_i}$.
\item At least one element of $s$ has depth exactly $n-1$. 
\end{itemize}

A \textbf{cactus of depth} $\boldsymbol{1}$, also called a \textbf{leaf}, must have empty sequences for $s$ and $\chi$. In this case we identify the leaf $C$ and $\xi \in \mathcal{C}_G^0$.

We use the notation $(\xi(C),s(C),\chi(C)):=C$, and denote by $d(C)$ the common length of $s(C)$ and $\chi(C)$. Furthermore, we denote the depth of $C$ by $n(C)$.
\end{definition}

\begin{definition}
A \textbf{cactus forest} is a sequence of cacti $(C_1, \ldots C_k)$ such that $\xi(C_j)_0$ does not depend on $j$. Its depth, denoted by $n(C_1,\ldots,C_k)$, is equal to $\max_i n(C_i)$.
\end{definition}

\begin{notation}
For every cactus $C$ and every $k \le n(C)$, we call \textbf{$k$-th level} of $C$ and denote $\ell_k(C)$ the set of cacti defined inductively by:
\[\begin{array}{ll}
\ell_1(C)=\{C\}\\
\ell_k(C) = \bigcup_{1\le i \le d(C)} \ell_{k-1}(s(C)_i)\quad \text{when }k \ge2. 
\end{array}\]

Furthermore, for all $k > n(C)$, we set $\ell_k(C) = \emptyset$.
For a cactus forest $(C_1,\ldots,C_l)$, and $k \le n(C_1,\ldots,C_l)$, its $k$-th level $\ell_k(C_1,\ldots,C_l)$ is defined as 
\[\ell_k(C_1,\ldots,C_l) = \bigcup_{j} \ell_k(C_j).\]

\end{notation}

\begin{example} \label{example.cactus}A cactus can be represented as labeled rooted tree, although we will not use such a representation formally:

\begin{figure}[h!]
\begin{center}
\begin{tikzpicture}[every node/.style={draw,circle, minimum size=.6cm, inner sep=0pt}, baseline={([yshift=-.5ex]current bounding box.center)}]
\node (a) at (0,0) {a};
\node (b) at (1.6,0) {b};
\node (c) at (.8,.8) {c};
\draw (a) -- (b) -- (c) -- (a);
\path (a) edge[loop left, looseness=5, in=150, out=210] (a);
\end{tikzpicture}
\hspace{2cm}
\begin{tikzpicture}[scale=0.7, baseline={([yshift=-.5ex]current bounding box.center)}]
\draw (0,0) edge node[above]{1} (2,1); 
\draw (0,0) edge node[below]{0} (2,-1);
\draw (4,0) edge node[below]{1} (2,1);
\draw (2,1) edge node[below]{2} (4,2);
\draw[fill=gray!80] (0,0) circle (3pt);
\draw[fill=gray!80] (2,1) circle (3pt);
\draw[fill=gray!80] (2,-1) circle (3pt);
\draw[fill=gray!80] (4,0) circle (3pt);
\draw[fill=gray!80] (4,2) circle (3pt);
\node at (0,-0.5) {$abca$};
\node at (2,-1.5) {$aa$};
\node at (2,1.5) {$bcab$};
\node at (4,-0.5) {$cac$};
\node at (4,2.5) {$abca$};
\end{tikzpicture}.
\end{center}
\caption{A graph $G$ and a cactus of depth $3$ on $G$. Every subtree is a cactus $C$ whose root is labeled by $\xi(C)$, whose children are $s(C)$ (ordered from bottom to top) and where the edge to each child is labeled by the corresponding $\chi_i(C)$.}
\end{figure}
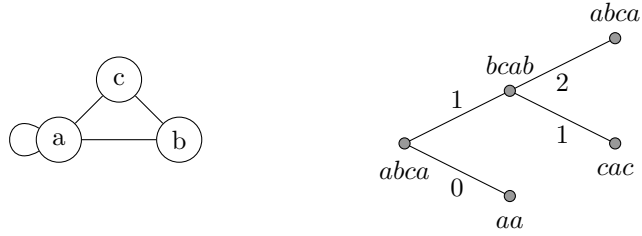
\end{example}

\subsection{Cycle $\pi(C)$ \textit{encoded} by a cactus forest $C$}

The purpose of cacti is to encode cycles on $G$. It is clear how a cactus $C$ of depth $1$ encodes 
$\xi(C)$. The cycle encoded by a cactus $C$ of depth $n>1$ is obtained by `plugging' in $\xi(C)$ the cycles 
encoded by the cacti in the sequence $s(C)$ in order, on positions of $\xi(C)$ determined by the sequence $\chi(C)$. We provide a formal definition below.

\begin{notation}
Let us recall that for two cycles $c$ and $c'$ and $k \le l(c)$ such that $c_k = c'_0$, the cycle $c {\oplus}_{k} c'$ is defined by:
\[c {\oplus}_{k} c' = c_0 \hdots c_{k-1} c' c_{k+1} \hdots c_{l(c)}.\]
More generally, consider a sequence of cycles $c^{(0)}, c^{(1)}, \dots ,c^{(m)}$ with $m \ge 2$ and a non-decreasing sequence of integers $k_1, \dots , k_m \le l(c)$ such that for all $0 < j \le m$, 
$c^{(0)}_{k_j} = c^{(j)}_{0}$. We define inductively a cycle 
$c^{(0)} \oplus_{k_1} c^{(1)} \ldots \oplus_{k_m} c^{(m)}$ by:
\[c^{(0)} \oplus_{k_1} c^{(1)} \ldots \oplus_{k_m} c^{(m)} = (c^{(0)} \oplus_{k_1} c^{(1)} \ldots \oplus_{k_{m-1}} c^{(m-1)}) \oplus_{k_m+\sum_{j=1}^{m-1} l(c^{(j)})} c^{(m)}\]
\end{notation}

\begin{remark}
This reflects what we mean by `plugging' successively the cycles $c^{(j)}$ in $c$ at positions $k_j$. 
\end{remark}

\begin{notation}Every cactus $C$ is said to \emph{encode} a cycle $\pi(C)$, which we define inductively.
When $C$ is a leaf, $\pi(C) := C$. For any depth $n \ge2$, and $C$ a cactus of depth $n$, we set: 
\[\pi(C) = \xi(C)\oplus_{\chi(C)_0}\pi(s(C)_0) \oplus_{\chi(C)_1}\pi(s(C)_1)\oplus\cdots \oplus_{\chi(C)_{d(C)}}\pi(s(C)_{d(C)}).
\]
As well, a cactus forest $(C_1,\ldots,C_k)$ encodes the 
cycle \[\pi(C_1) \odot \pi(C_2) \ldots \odot \pi(C_k),\]
which we denote by $\pi(C_1,\ldots,C_k)$.
\end{notation}

\begin{example}
For the cactus $C$ defined in Example~\ref{example.cactus}, the cycle $\pi(C)$ is: 
\[\pi(C) = aabcacabcabca.\]
\end{example}

\subsection{Encoding cycles by cactus forests of bounded depth\label{section.bounded.depth}}

A cycle may be encoded by more than one cacti:

\begin{example}
Let $C$ be the cactus defined in Example~\ref{example.cactus}. The cycle $\pi(C)$ is encoded by another cactus $C' \neq C$ (which means that $\pi(C') = \pi(C)$):
\begin{center}
\begin{tikzpicture}[every node/.style={draw,circle, minimum size=.6cm, inner sep=0pt}, baseline={([yshift=-.5ex]current bounding box.center)}]
\node (a) at (0,0) {a};
\node (b) at (1.6,0) {b};
\node (c) at (.8,.8) {c};
\draw (a) -- (b) -- (c) -- (a);
\path (a) edge[loop left, looseness=5, in=150, out=210] (a);
\end{tikzpicture}
\hspace{2cm}
\begin{tikzpicture}[scale=0.7, baseline={([yshift=-.5ex]current bounding box.center)}]
\draw (0,0) edge node[above]{2} (2.5,1.2); 
\draw (0,0) edge node[above, near end]{2} (2.5,.35);
\draw (0,0) edge node[below, near end]{2} (2.5,-.35);
\draw (0,0) edge node[below]{0} (2.5,-1.2);
\draw[fill=gray!80] (0,0) circle (3pt);
\draw[fill=gray!80] (2.5,1.2) circle (3pt);
\draw[fill=gray!80] (2.5,-1.2) circle (3pt);
\draw[fill=gray!80] (2.5,.35) circle (3pt);
\draw[fill=gray!80] (2.5,-.35) circle (3pt);
\node at (0,-0.5) {$abca$};
\node at (2.5,-1.7) {$aa$};
\node at (3.25,-.5) {$cac$};
\node at (3.25,.5) {$cabc$};
\node at (2.5,1.7) {$cabc$};
\end{tikzpicture}
\end{center}
\end{example}

However, we can encode any cycle with a cactus forest which has the nice property of having bounded depth: 

\begin{lemma}\label{lemma.cactus}
For any cycle $c$ on a graph $H$, there exists a  cactus forest $(C_1,\ldots,C_k)$ such that $\pi(C_1,\ldots,C_k) =c$  with $n(C_1,\ldots,C_k) \le |V_H|$.
\end{lemma}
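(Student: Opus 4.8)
### Proof Plan

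The plan is to induct on the length $l(c)$ of the cycle and, simultaneously, build the cactus forest "layer by layer" so that the depth increases only when we genuinely pass to a strictly smaller vertex set. The key observation is the following: given any cycle $c$ on $H$ with $c_0 = a$, one can write $c$ as a concatenation of simple cycles based at vertices lying along a fixed simple walk, or alternatively as a single simple cycle $\xi$ through $a$ into whose vertices we "plug" shorter cycles. The subtlety that makes the depth bound $|V_H|$ work is that we must guarantee the plugged-in cycles avoid the vertex $a$ (or more precisely, live on a graph with strictly fewer vertices), so that the recursion terminates in at most $|V_H|$ steps.

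First I would handle the base cases: a cycle of length $0$ (an empty cycle at $a$) is encoded by the trivial leaf, and a simple cycle is itself a leaf, i.e.\ a cactus forest of depth $1 \le |V_H|$. For the inductive step, given a cycle $c$ with $c_0 = a$ that is not simple, I would find the first repetition: the smallest $j > 0$ such that $c_j = c_i$ for some $i < j$. Two things can happen. If $c_i = c_j = a$ (the cycle returns to its basepoint), then $c = c' \odot c''$ where $c' = c_0 \dots c_j$ and $c'' = c_j \dots c_{l(c)}$ are shorter cycles at $a$; I recurse on each and concatenate the resulting cactus forests, taking the max of their depths — this does not increase depth. If $c_i = c_j = b \neq a$, then the portion $c_i \dots c_j$ is a cycle at $b$ that does not pass through $a$ after excision (more care is needed here — see below), so it lives on $H \setminus \{a\}$, which has $|V_H| - 1$ vertices; by induction on the vertex set, it is encoded by a cactus forest of depth $\le |V_H| - 1$, and $c$ with this sub-loop excised is a shorter cycle at $a$ on $H$, handled by the length-induction. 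Plugging the sub-forest into the appropriate position of the excised cycle's encoding raises the depth to at most $|V_H|$.

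The cleanest organization is probably a double induction: outer induction on $|V_H|$, inner induction on $l(c)$. The outer hypothesis gives us, for the graph $H \setminus \{a\}$, a bound of $|V_H| - 1$ on the depth of any cactus forest encoding a cycle there; the inner hypothesis lets us peel off one simple loop at a time from $c$ while staying in $H$. Concretely: decompose $c$ at $a$ into a concatenation of cycles each of which either is simple or returns to $a$ only at its endpoints; each such piece, when it first leaves $a$ and comes back, traverses a cycle whose intermediate excursions all avoid $a$, hence are encoded (by the outer hypothesis) by cacti of depth $\le |V_H| - 1$; these plug into a simple cycle $\xi$ through $a$, yielding a single cactus of depth $\le |V_H|$.

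The main obstacle I anticipate is the bookkeeping in the excision step: ensuring that after removing a sub-loop the remaining object is still a genuine cycle with the same basepoint, that the "plug-in" positions $\chi_i$ end up non-decreasing as the cactus definition demands, and — most delicately — that the pieces we recurse on for the outer induction truly avoid the deleted vertex $a$. A repeated vertex $b$ might reappear many times, interleaved with returns to $a$, so one must be careful to choose the decomposition (e.g.\ always excising the innermost sub-loop, or processing repetitions in the right order) so that each recursive call genuinely drops a vertex rather than merely shortening the cycle. Verifying that the ``at least one child has depth exactly $n-1$'' condition in the definition of a cactus can be met (or working with a relaxed notion and then trimming) is a further minor technicality. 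Once the decomposition is set up correctly, assembling the cactus and checking $\pi(C_1,\dots,C_k) = c$ is a routine unwinding of the definitions of $\oplus_k$ and $\pi$.
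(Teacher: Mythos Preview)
Your plan is correct and follows essentially the same route as the paper's proof: induct on $|V_H|$, reduce to cycles that visit the basepoint $a=c_0$ only at the endpoints (your $c'\odot c''$ splitting), then observe that such a cycle is a simple cycle through $a$ with sub-cycles plugged in at intermediate vertices, each of which avoids $a$ and hence lives on $H\setminus\{a\}$, where the inductive hypothesis gives depth $\le |V_H|-1$. The paper does the excision ``all at once'' (writing $c = d \oplus_{l_1} d^{(1)} \cdots \oplus_{l_k} d^{(k)}$ directly) whereas you peel off one first-repetition loop at a time via an inner length-induction, but this is only a cosmetic difference; the technicalities you flag (non-decreasing $\chi$, plugging a sub-\emph{forest} in as several children, the depth-$n{-}1$ clause) are all genuine but minor, and the paper in fact glosses over them.
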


\begin{proof}
Let us consider a graph $H$ and let us prove the statement for this graph, by induction on $|V_H|$. 
When $|V_H|=1$, since we assumed that all graph considered are connected, $H$ consists in a unique vertex $a$ with a self loop. Therefore the statement is straightforward, since all the cycles on $H$ are of the form $a \ldots a$. 
Let us assume that we have proved the lemma whenever $|V_H| \le n$ for some integer $n \geq 1$ and assume that $|V_H|=n+1$. Let us consider a cycle $c$ on $H$. 

Without loss of generality we can assume that $c_j = c_0$ implies that $j=0$ or $j=l(c)$. Indeed we can write $c$ as a product for $\odot$ of cycles which satisfy this property. It is sufficient then to prove that each of these cycles is encoded by a cactus forest (in practice we will prove that it is encoded by a cactus) in order to prove that $c$ is encoded by a cactus forest.

It is then straightforward that there exist a simple cycle $d$ 
which begins and ends at $c_0$ and a sequence of cycles 
$d^{(1)}, \ldots , d^{(k)}$ in which $c_0$ does not appear, and integers $l_1, \ldots , l_k$ such that 
\[c = d \oplus_{l_1} d^{(1)} \ldots \oplus_{l_k} d^{(k)}.\]
The cycles $d^{(j)}$ can be seen as the maximal cycles 
which appear in $c$ in which $c_0$ does not appear.
Since the cycles $d^{(1)}, \ldots , d^{(k)}$
are on the subgraph $H'$ of $H$ on vertices $V_H \backslash \{c_0\}$, by induction each of these cycles is encoded by a cactus forest of depth no larger than $|V_H|-1$. This implies that 
$c$ is encoded by a cactus forest of depth no larger than $|V_H|$.\qedhere
\end{proof}

\begin{remark}
Notice that in Lemma~\ref{lemma.cactus}, the bound $|V_H|$ is tight only if there is a vertex of $H$ with a self-loop.
\end{remark}

\section{ When $|\quadcover{G}| < +\infty$, $\gamma_G(n) = O(\log(n))$\label{section.finite.square.cover}}

In this section, we prove that when the square-cover of $G$ is finite, $\gamma_G (n) = O(\log(n))$. As a consequence of Proposition~\ref{proposition.square.to.gamma} and Proposition~\ref{proposition.square.cover.decomposable} it is sufficient to prove that if $G$ is square-decomposable, then $\gamma_G(n) = O(\log(n))$. This is stated as Theorem~\ref{theorem.square.dismantable.to.log} in Section~\ref{section.main.finite.cover}.

To keep the exposition as clear as possible, we call `transformation' of a cycle $c$ into another cycle $c'$ of the same length $n$ a walk from $c$ to $c'$ in $\Delta_G^n$; the time taken by the transformation is the length of the walk. We will also say that, given a concatenation of cycles $c = c_1\odot\dots\odot c_n$ and transformations $\varphi_i : c_i \to c_i'$, these transformations can be applied `in parallel' if we can transform $c$ into $c' = c_1'\odot\dots\odot c_n'$ without taking more time than the longest transformation $\varphi_i$.

This part is structured as follows. In Section~\ref{section.additional.spines}, we provide a way to transform a cycle $c$ concatenated with additional spines into a power of a spine. Then in Section~\ref{section.spine.parallelisation} we show that this type of transformations can be applied in parallel to a certain extent. This `parallelization' will be a central tool in the proof of Theorem~\ref{theorem.square.dismantable.to.log}.

\subsection{Transforming cycles with additional spines\label{section.additional.spines}}

In this section, we show how to transform a cycle of the form 
$t^k \odot u$, where $u$ is a cycle and $t$ is a spine on $u_0$, into $t^{k + l(u)/2}$. In practice, we only need to consider the case $u=2\lambda_G$ $k=2\lambda_G$ and $u=c^{\lambda_G}$, where $\lambda_G$ is a characteristic of the graph $G$ defined below. Furthermore, we find a bound on the distance $d_G$ between the cycles $t^{2\lambda_G} \odot c^{\lambda_G}$ and $t^{2\lambda_G + \lambda_G l(c)/2}$. 

Although the distance $d_G$ is also a distance between cycles, we need, in order to parallelize these 
transformations, to use another distance that we denote by $d_G^{\mathcal{R}}$ (see Notation~\ref{notation.distance.R}). From the bounds obtained here on $d_G^{\mathcal{R}}$ for $t^k \odot u$, we are able to obtain bounds on $d_G^{\mathcal{R}}$ for larger words in which 
the transformations are executed in parallel (in  Section~\ref{section.spine.parallelisation}). 

Lemma~\ref{lem:comp-distances} provides a relation between $d_G$ and $d_G^{\mathcal{R}}$, which enables us to recover 
bounds on distances for $d_G$ from bounds on distances for $d_G^{\mathcal{R}}$.

The main result of this section is Corollary~\ref{cor:reduction2m}, which derives from the intermediate results in Lemma~\ref{lemma.small.steps} and Lemma~\ref{lemma.cycle.reduction}.

\begin{notation}\label{notation.distance.R}
For all cycles $c,c'$ on $G$ such that $l(c) = l(c')$ and $c_0 = c'_0$, denote $c \mathcal{R}_0 c'$ (resp. $c \mathcal{R}_1 c'$) when there exists a left (resp. right) shift of $c'$ which is neighbor of $c$ in the graph $\Delta_G^{l(c)}$. 
Also denote by $d_G^{\mathcal{R}}(c,c')$ the minimal $m$ such that there is a sequence $(c^{(k)})_{0 \le k \le m}$ of cycles such that $c^{(0)} = c$, $c^{(m)}=c'$ and for all $k < m$, $c^{(k)} \mathcal{R}_0 c^{(k+1)}$ or $c^{(k)} \mathcal{R}_1 c^{(k+1)}$.  
\end{notation}

\begin{lemma}\label{lem:comp-distances}
For all cycles $c,c'$ such that $l(c)=l(c')$, we have the inequality 
\[d_G(c,c') \le 2 d_G^{\mathcal{R}}(c,c').\]
\end{lemma}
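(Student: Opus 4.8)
The statement compares two distances on the set of cycles of a fixed length $n = l(c)$: the ``standard'' distance $d_G$, which allows any step in $\Delta_G^n$ (so in particular left shifts, right shifts, and anything else that can sit next to $c$ in an element of $X_G^{(n)}$), and the more restrictive $d_G^{\mathcal{R}}$, whose single step $c \mathcal{R}_\epsilon c'$ forces $c'$ (up to a one-position shift) to be a $\Delta_G^n$-neighbor of $c$. Since $d_G^{\mathcal{R}}$ uses a subset of the moves available to $d_G$, but with an extra ``shift'' bolted on, the natural approach is to show that \emph{one} $\mathcal{R}$-step can be simulated by \emph{at most two} $\Delta_G^n$-steps, and then concatenate.

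\textbf{Key steps.} First I would unwind the definitions: if $c \mathcal{R}_0 c'$, there is a left shift $q \in \rho_l(c')$ with $(c,q)$ an edge of $\Delta_G^n$; if $c \mathcal{R}_1 c'$, the same with a right shift. So $d_G(c, q) = 1$. Second, invoke Remark~\ref{remark.shift.neighbors}: any walk and a left or right shift of it are at $d_G$-distance $1$. The walk $q$ is a shift of $c'$, so $d_G(q, c') = 1$ (or $0$ if $q = c'$, which cannot literally happen since a shift changes length/endpoints, but $\le 1$ is all we need). By the triangle inequality for $d_G$ (which holds since $d_G$ is a genuine graph distance on $\Delta_G^n$), $d_G(c, c') \le d_G(c,q) + d_G(q,c') \le 2$. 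Third, take a geodesic $\mathcal{R}$-sequence $c = c^{(0)}, c^{(1)}, \dots, c^{(m)} = c'$ realizing $m = d_G^{\mathcal{R}}(c,c')$; applying the previous bound to each consecutive pair gives $d_G(c^{(k)}, c^{(k+1)}) \le 2$, and summing along the path yields $d_G(c,c') \le \sum_{k=0}^{m-1} d_G(c^{(k)},c^{(k+1)}) \le 2m = 2 d_G^{\mathcal{R}}(c,c')$.

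\textbf{Main obstacle.} There is essentially no deep obstacle here — the result is a bookkeeping lemma. The one subtlety worth stating carefully is that $\rho_l$, $\rho_r$ as defined in the excerpt act on \emph{walks}, not cycles, so when $c'$ is a cycle a left/right shift $q$ of $c'$ is a walk of the same length that is generally \emph{not} a cycle; one must check that $\Delta_G^n$ and $d_G$ are defined on all walks of length $n$ (they are: the vertices of $\Delta_G^n$ are all of $L_G^n$), so the computation $d_G(c,q) = 1$ and $d_G(q,c') = 1$ makes sense and Remark~\ref{remark.shift.neighbors} applies verbatim. The other point to keep straight is orientation: for $\mathcal{R}_0$ the shift is on $c'$ and the edge is $(c, q)$; for $\mathcal{R}_1$ it is the mirror situation, handled symmetrically using that $d_G$ is symmetric. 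With these remarks in place the two-line triangle-inequality argument closes the proof.
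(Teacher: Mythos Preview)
Your proposal is correct and follows essentially the same argument as the paper: reduce to the one-step case $c \mathcal{R}_\epsilon c'$, observe that the shift $q$ of $c'$ furnished by the definition is a $\Delta_G^n$-neighbor of both $c$ (by definition of $\mathcal{R}_\epsilon$) and $c'$ (by Remark~\ref{remark.shift.neighbors}), giving $d_G(c,c') \le 2$, and then sum along a minimizing $\mathcal{R}$-chain. Your explicit remark that $\Delta_G^n$ is a graph on all of $L_G^n$, so that the intermediate (non-cycle) walk $q$ is a legitimate vertex, is a useful clarification that the paper leaves implicit.
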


\begin{proof}
First consider the case $d_G^{\mathcal{R}}(c,c') = 1$. Let us assume that $c \mathcal{R}_0 c'$ (the other case is symmetric). This means that there is a left shift of $c$ which is a neighbor of both $c'$ and $c$ in $\Delta_G^{l(c)}$. As a consequence $d_G(c,c') \le 2 = 2 d_G^{\mathcal{R}}(c,c')$.

For the general case, we have for any $c,c'$:
\[
2 d_G^{\mathcal{R}}(c,c') = 2 \sum_{k=0}^m d_G^{\mathcal{R}}(c^{(k)},c^{(k+1)}) \ge \sum_{k=0}^m d_G(c^{(k)},c^{(k+1)}) \ge d_G(c,c'), 
\]
where $m= d_G^{\mathcal{R}}(c,c')$ and $(c^{(k)})_{0 \le k \le m}$ are as defined in Notation~\ref{notation.distance.R}.
\end{proof}

\begin{lemma}\label{lemma.small.steps}
Let $c,c'$ be two non-backtracking cycles which differ by a square and $t$ a spine on $c_0$. Assume that $l(c) \ge l(c')$. Then
\[d^{\mathcal{R}}_G(c, t^{(l(c)-l(c'))/2} \odot c') \le \max \left(2, \frac{l(c)-l(c')}{2}\right).\]
\end{lemma}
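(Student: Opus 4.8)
The plan is to handle two cases depending on whether the square $s$ realizing $c' = \varphi(c \oplus_k s)$ (or $c = \varphi(c' \oplus_k s)$) causes a length decrease or not, and to track everything in terms of the relation $\mathcal{R}$ rather than $\Delta_G^n$ directly. First I would observe that since $c$ and $c'$ differ by a square and $l(c) \geq l(c')$, the only possibilities are $l(c) = l(c')$ (the square is "inserted" and then fully cancels against a matching backtrack, or it is a pure "rotation" as in case (iii) of Figure~\ref{figure.differ}) or $l(c) = l(c') + 2$ or $l(c) = l(c') + 4$ — because inserting a square of four edges and then reducing via $\varphi$ can only remove an even number of edges, at most four. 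Actually $\varphi(c \oplus_k s)$ has length $l(c) + 4 - 2j$ where $j$ is the number of spine-cancellations at the splice point, so $j \in \{0,1,2\}$ and the length change is $+4, +2, 0$; since we assume $l(c) \geq l(c')$ and the roles of $c,c'$ may be swapped, we get $l(c) - l(c') \in \{0, 2, 4\}$.

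**The case $l(c) = l(c')$.** Here the target is simply $c'$ itself and I must show $d_G^{\mathcal{R}}(c, c') \leq 2$. When $c,c'$ differ by a square with no net length change, one of them is obtained from the other by replacing a length-two stretch $c_{k-1} c_k c_{k+1}$ by an alternate length-two stretch $c_{k-1} x c_{k+1}$ through the square (case (ii) of Figure~\ref{figure.differ}), or by a "rotation" of the whole cycle around a square attached at one point (case (iii)). In the first situation, the intermediate cycle that agrees with $c$ except having a backtrack $c_{k-1} c_k c_{k-1}$ inserted — equivalently a left or right shift of an appropriate cycle — should be a common $\Delta_G^n$-neighbor in the shifted sense, giving $c \mathcal{R} c''$ and $c'' \mathcal{R} c'$, hence distance at most $2$. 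For case (iii), a single rotation corresponds directly to a left or right shift composed with an edge, so the distance should be $1$. This matches the "$\max(2, \cdot)$" with the first argument being $2$.

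**The case $l(c) - l(c') = 2\lambda$ with $\lambda \in \{1, 2\}$.** Now the target is $t^{\lambda} \odot c'$ and I must bound $d_G^{\mathcal{R}}$ by $\max(2, \lambda) = 2$ (since $\lambda \leq 2$, both bounds coincide). The idea is: $c$ contains, near position $k$, a stretch of $2\lambda$ extra edges forming (part of) a square that collapsed when passing to $c'$. I would explicitly write $c$ as $c' $ with a detour inserted at $c'_k = c_0$-translate, and then transform this detour into $\lambda$ copies of the spine $t$ at the basepoint $c_0$ by a sequence of $\mathcal{R}_0$/$\mathcal{R}_1$ moves. Concretely, each $\mathcal{R}$-move is a left/right shift followed by one edge in $\Delta_G^n$, which lets me "slide" the detour along the cycle toward the basepoint and convert it into spine powers; sliding a length-two bump to an adjacent position costs one $\mathcal{R}$-step, and converting a bump at the basepoint into a spine $t^1$ costs at most one more. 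Because $\lambda \leq 2$ there are at most a couple of such bumps, so two $\mathcal{R}$-steps suffice.

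**Expected main obstacle.** The genuine difficulty is the careful bookkeeping of the splice operation $\oplus_k$ together with $\varphi$: precisely which vertices survive the backtrack-reduction, and verifying in each sub-case (analogous to the three-way and nested case split already seen in the proof that $\psi_{b\mapsto a}$ respects differing-by-a-square) that the cycle one writes down is genuinely non-backtracking where it needs to be, has the correct length $l(c)$, and that the claimed intermediate cycles really are related by $\mathcal{R}_0$ or $\mathcal{R}_1$ — i.e. that some left or right shift of one is a $\Delta_G^{l(c)}$-neighbor of the other. Getting the shift direction right (left vs. right, which determines $\mathcal{R}_0$ vs. $\mathcal{R}_1$) and confirming the two walks can sit side by side in an element of $X_G^{(l(c))}$ (equivalently, checking the column-compatibility pattern is locally admissible, via Lemma~\ref{lem:localglobal}) is where the proof will spend most of its effort; the counting bound $\max(2, (l(c)-l(c'))/2)$ itself then falls out immediately since $l(c)-l(c') \leq 4$.
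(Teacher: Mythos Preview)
Your proposal has a genuine gap: the claim that $l(c)-l(c')\in\{0,2,4\}$ is false. You reason that $\varphi(c\oplus_k s)$ has length $l(c)+4-2j$ with $j\in\{0,1,2\}$ because ``at most four'' edges can cancel at the splice point, but $\varphi$ is \emph{iterated} backtrack removal. In case~(iii) of Figure~\ref{figure.differ} the square sits at the tip of a branch: $c$ walks out along some path $c_{n-1-k}\ldots c_{n-1}$, goes once around the square $c_{n-1}c_n c_{n+1}c_{n+2}c_{n+3}$ with $c_{n+3}=c_{n-1}$, and walks back along the same path so that $c_{n+3+i}=c_{n-1-i}$ for all $i\le k$. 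Inserting the inverse square at position $n-1$ kills the four square edges, but then the entire branch becomes a stack of spines and $\varphi$ erases all of it, yielding $l(c)-l(c')=4+2k$ for arbitrary $k\ge 0$. This is exactly why the lemma's bound is $\max(2,(l(c)-l(c'))/2)$ rather than simply $2$: the second argument of the $\max$ is only active in this large-branch situation, which your plan never addresses.

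For the small cases $l(c)-l(c')\in\{0,2,4\}$ your strategy is essentially the paper's (explicit short $\mathcal{R}$-chains), though your identification of which picture in Figure~\ref{figure.differ} corresponds to which length difference is scrambled: (i) is $l(c)-l(c')=0$, (ii) is $2$, and (iii) is $\ge 4$. For the missing case $l(c)-l(c')=4+2k$ with $k>0$, the paper first applies the $l(c)-l(c')=4$ construction to reach $t^2\odot c''$ in two $\mathcal{R}$-steps, where $c''=c_0\ldots c_{n-1}c_{n+4}\ldots c_{l(c)}$ still contains the branch as $k$ nested spines, and then peels these off one by one: setting $c^{(i)}=c_0\ldots c_{n+1-i}\,c_{n+2+i}\ldots c_{l(c)}$ one checks $d_G^{\mathcal{R}}(t^i\odot c^{(i)},t^{i+1}\odot c^{(i+1)})=1$, giving $k$ further steps and the total bound $2+k=(l(c)-l(c'))/2$.
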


\begin{proof}

We recall in Figure~\ref{figure.differ2} the three possible ways two non-backtracking cycles $c,c'$ can differ by a square. In particular we have $(l(c)-l(c')) \in 2 \mathbb{N}$.

\begin{figure}[h!]
\begin{center}
\begin{tikzpicture}[scale=0.3]
\draw (0,0) -- (1,1) -- (2,1) -- (3,2) -- (4,1) -- (5,1) -- (6,2);
\draw[-latex] (0,0) -- (0.5,0.5);
\draw[-latex] (5,1) -- (5.75,1.75);
\draw[-latex] (2,1) -- (2.75,1.75);
\draw[-latex] (2,1) -- (2.75,0.25);

\draw (2,1) -- (3,0) -- (4,1);
\draw[dashed] (6,2) -- (7,3);
\draw[dashed] (0,0) -- (-1,-1);
\node at (3,-1) {$c_n$};
\node at (3,3) {$c'_n$};
\node at (-2,1) {\textbf{(i)}};

\begin{scope}[xshift=12cm]
\draw (0.5,0) -- (1.5,1) -- (2.5,1) -- (4,1) -- (5,1) -- (6,2);
\draw[-latex] (0.5,0) -- (1,0.5);
\draw[-latex] (2.5,1) -- (3.5,1);
\draw[-latex] (2.5,-0.5) -- (3.5,-0.5);
\draw[-latex] (5,1) -- (5.75,1.75);
\draw (2.5,1) -- (2.5,-0.5) -- (4,-0.5) -- (4,1);
\draw[dashed] (6,2) -- (7,3);
\draw[dashed] (0.5,0) -- (-0.5,-1);
\node at (2.5,-1.5) {$c_n$};
\node at (4.75,-1.5) {$c_{n+1}$};
\node at (-2,1) {\textbf{(ii)}};
\end{scope}

\begin{scope}[xshift=24cm]
\draw (0,0) -- (1,1) -- (2,1) -- (4,1) -- (5,1) -- (6,2);
\draw[-latex] (0,0) -- (0.5,0.5);
\draw[-latex] (5,1) -- (5.75,1.75);
\begin{scope}[yshift=-3cm]
\draw (3,1) -- (2,0) -- (3,-1) -- (4,0) -- (3,1);
\node at (1.15,-.25) {$c_n$};
\node at (3.15,-1.75) {$c_{n+1}$};
\node at (5.5,-.25) {$c_{n+2}$};
\node at (3.15,5) {$c_{n-1-k} = c_{n+3+k}$};
\draw[-latex] (3,-1) -- (3.75,-0.25);
\end{scope}
\draw (3,1) -- (3,0.5);
\draw[dashed] (3,0.5) -- (3,-1.5);
\draw (3,-1.5) -- (3,-2);
\draw[dashed] (6,2) -- (7,3);
\draw[dashed] (0,0) -- (-1,-1);

\node at (-2,1) {\textbf{(iii)}};
\end{scope}
\end{tikzpicture}
\end{center}
\caption{The three ways two non-backtracking cycles can differ by a square. \label{figure.differ2}}
\end{figure}
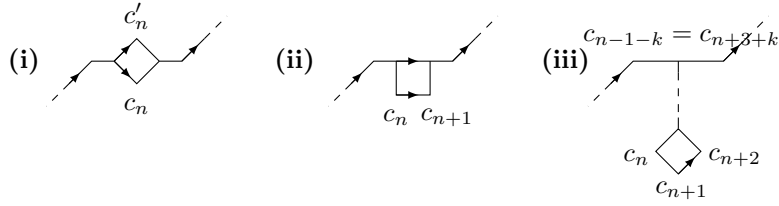

We consider each case one by one. In each of the cases $(l(c)-l(c')) \in \{0,2,4\}$ the table below provide a sequence of cycles from $c$ to $t^{l(c)-l(c')/2} \odot c'$ which yields the statement in this case. 

\begin{enumerate}[(i)]
\item $\boldsymbol{l(c)-l(c') = 0}$: \ \ \ \ 
\begin{tikzpicture}[scale=0.7, baseline=0.5em]
\draw[step=1,black,thin] (0,-1) grid (7,2);
\node[anchor=east] at (-.2, 1.5) {$c=$};

\node at (.5, 1.5) {$c_0$};
\node at (1.5, 1.5) {$\dots$};
\node at (2.5, 1.5) {$\dots$};
\node at (3.5, 1.5) {$c_{n-1}$};
\node at (4.5, 1.5) {$c_n$};
\node at (5.5, 1.5) {$c_{n+1}$};
\node at (6.5, 1.5) {$\dots$};

\node[anchor=east] at (-.2, .5) {$\rho_l(c)\ni$};

\node at (.5, 0.5) {$c_1$};
\node at (1.5, 0.5) {$\dots$};
\node at (2.5, 0.5) {$c_{n-1}$};
\node at (3.5, 0.5) {$c_n$};
\node at (4.5, 0.5) {$c_{n+1}$};
\node at (5.5, 0.5) {$\dots$};
\node at (6.5, 0.5) {$\dots$};

\node[anchor=east] at (-.2, -.5) {$c'=$};

\node at (.5, -0.5) {$c_0$};
\node at (1.5, -0.5) {$\dots$};
\node at (2.5, -0.5) {$\dots$};
\node at (3.5, -0.5) {$c_{n-1}$};
\node at (4.5, -0.5) {$c'_n$};
\node at (5.5, -0.5) {$c_{n+1}$};
\node at (6.5, -0.5) {$\dots$};
\end{tikzpicture}

\item $\boldsymbol{l(c)-l(c') = 2}$: \ \ \
\begin{tikzpicture}[scale=0.7, baseline=0.5em]
\draw[step=1,black,thin] (0,-1) grid (9,2);

\node[anchor=east] at (-.2, 1.5) {$c=$};
\node at (.5, 1.5) {$c_0$};
\node at (1.5, 1.5) {$\dots$};
\node at (2.5, 1.5) {$\dots$};
\node at (3.5, 1.5) {$c_{n-1}$};
\node at (4.5, 1.5) {$c_n$};
\node at (5.5, 1.5) {$c_{n+1}$};
\node at (6.5, 1.5) {$c_{n+2}$};
\node at (7.5, 1.5) {$\dots$};
\node at (8.5, 1.5) {$\dots$};

\node[anchor=east] at (-.2, 0.5) {$\rho_r(c) \ni$};
\node at (.5, 0.5) {$t_1$};
\node at (1.5, 0.5) {$c_0$};
\node at (2.5, 0.5) {$\dots$};
\node at (3.5, 0.5) {$\dots$};
\node at (4.5, 0.5) {$c_{n-1}$};
\node at (5.5, 0.5) {$c_{n}$};
\node at (6.5, 0.5) {$c_{n+1}$};
\node at (7.5, 0.5) {$c_{n+2}$};
\node at (8.5, 0.5) {$\dots$};

\node[anchor=east] at (-.2, -.5) {$t\odot c' =$};
\node at (.5, -0.5) {$t_0$};
\node at (1.5, -0.5) {$t_1$};
\node at (2.5, -0.5) {$c_0$};
\node at (3.5, -0.5) {$\dots$};
\node at (4.5, -0.5) {$\dots$};
\node at (5.5, -0.5) {$c_{n-1}$};
\node at (6.5, -0.5) {$c_{n+2}$};
\node at (7.5, -0.5) {$\dots$};
\node at (8.5, -0.5) {$\dots$};
\end{tikzpicture}

\item $\boldsymbol{l(c)-l(c') = 4}$: \ \
\begin{tikzpicture}[scale=0.7, baseline=-1.5em]
\draw[step=1,black,thin] (0,-3) grid (10,2);

\node[anchor=east] at (-.2, 1.5) {$c=$};
\node at (.5, 1.5) {$c_0$};
\node at (1.5, 1.5) {$\dots$};
\node at (2.5, 1.5) {$\dots$};
\node at (3.5, 1.5) {$c_{n-1}$};
\node at (4.5, 1.5) {$c_n$};
\node at (5.5, 1.5) {$c_{n+1}$};
\node at (6.5, 1.5) {$c_{n+2}$};
\node at (7.5, 1.5) {$c_{n+3}$};
\node at (8.5, 1.5) {$\dots$};
\node at (9.5, 1.5) {$\dots$};

\node[anchor=east] at (-.2, .5) {$\rho_r(c)\ni$};
\node at (.5, 0.5) {$t_1$};
\node at (1.5, 0.5) {$c_0$};
\node at (2.5, 0.5) {$\dots$};
\node at (3.5, 0.5) {$\dots$};
\node at (4.5, 0.5) {$c_{n-1}$};
\node at (5.5, 0.5) {$c_{n}$};
\node at (6.5, 0.5) {$c_{n+1}$};
\node at (7.5, 0.5) {$c_{n+2}$};
\node at (8.5, 0.5) {$c_{n+3}$};
\node at (9.5, 0.5) {$\dots$};

\node[anchor=east] at (-.2, -.5) {$\tilde c=$};
\node at (.5, -0.5) {$t_0$};
\node at (1.5, -0.5) {$t_1$};
\node at (2.5, -0.5) {$c_0$};
\node at (3.5, -0.5) {$\dots$};
\node at (4.5, -0.5) {$\dots$};
\node at (5.5, -0.5) {$c_{n-1}$};
\node at (6.5, -0.5) {$c_{n}$};
\node at (7.5, -0.5) {$c_{n+3}$};
\node at (8.5, -0.5) {$\dots$};
\node at (9.5, -0.5) {$\dots$};

\node[anchor=east] at (-.2, -1.5) {$\rho_r(\tilde c)\ni$};
\node at (.5, -1.5) {$t_1$};
\node at (1.5, -1.5) {$t_0$};
\node at (2.5, -1.5) {$t_1$};
\node at (3.5, -1.5) {$c_0$};
\node at (4.5, -1.5) {$\dots$};
\node at (5.5, -1.5) {$\dots$};
\node at (6.5, -1.5) {$c_{n-1}$};
\node at (7.5, -1.5) {$c_{n}$};
\node at (8.5, -1.5) {$c_{n+3}$};
\node at (9.5, -1.5) {$\dots$};

\node[anchor=east] at (-.2, -2.5) {$t^2\odot c'=$};
\node at (.5, -2.5) {$t_0$};
\node at (1.5, -2.5) {$t_1$};
\node at (2.5, -2.5) {$t_0$};
\node at (3.5, -2.5) {$t_1$};
\node at (4.5, -2.5) {$c_0$};
\node at (5.5, -2.5) {$\dots$};
\node at (6.5, -2.5) {$\dots$};
\node at (7.5, -2.5) {$c_{n-1}$};
\node at (8.5, -2.5) {$c_{n+4}$};
\node at (9.5, -2.5) {$\dots$};
\end{tikzpicture}
\end{enumerate}

When $l(c)-l(c') > 4$, let $k>0$ be such that for all $i \le k$, 
$c_{n+3+i}=c_{n-1-i}$ and $c_{n+3+k+1} \neq c_{n-1-k-1}$. By applying the case $l(c)-l(c')=4$, we have
\[d_G^{\mathcal{R}}(c,t^2 \odot c'')\le 2 \quad \text{where} \quad c''= c_0 \ldots c_{n-1} c_{n+4} \ldots c_{l(c)}.
\]
We thus only need to prove that $d_G^{\mathcal{R}}(t^2 \odot c'', t^{(l(c)-l(c'))/2} \odot c') \le k$. For this it is sufficient to see 
that
\[\forall i \in \llbracket 2, k+1\rrbracket,\ d_G^{\mathcal{R}}(t^{i} \odot c^{(i)}, t^{i+1} \odot c^{(i+1)})=1,\]
where we set 
\[c^{(i)} = c_0 \ldots c_{n-1-(i-2)} c_{n+4+(i-2)} \ldots c_{l(c)}.\qedhere\]
\end{proof}

By a repeated application of the previous lemma, we can transform a simple cycle into the power of a spine in bounded time:

\begin{lemma}\label{lemma.cycle.reduction}There exists a constant $\lambda_G$ such that, for any simple cycle $c$ which is decomposable into squares and spine $t$ on $c_0$, we have:
\[d^{\mathcal{R}}_G(t^{\lambda_G} \odot c, t^{\lambda_G +l(c)/2}) \le \lambda_G \qquad\text{and} \qquad l(c)\leq 2\lambda_G.\]
\end{lemma}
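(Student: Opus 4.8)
The plan is to run a fixed minimal square decomposition of $c$, apply Lemma~\ref{lemma.small.steps} at each step, and carry along a suitable number of copies of the spine $t$ so that the total length stays constant throughout; summing the per-step costs via the triangle inequality for $d^{\mathcal R}_G$ then gives the bound. We read $c$ as a \emph{simple} cycle, as the sentence preceding the statement suggests. Observe first that a cycle decomposable into squares has even length, since differing by a square preserves length parity and the empty cycle has length $0$; hence $l(c)/2$ is an integer. As $G$ is finite it has finitely many simple cycles; for each simple cycle $c$ of $G$ that is decomposable into squares, fix a square decomposition $(p^{(i)})_{0\le i\le m_c}$ of minimal length $m_c$ (the area of $c$), and set
\[
L:=\max\big\{\,l(p^{(i)})\ :\ c\ \text{simple and decomposable into squares},\ 0\le i\le m_c\,\big\},\qquad \lambda_G:=\max\!\Big(L,\ \max_{c}\ m_c\cdot\max(2,\tfrac{L}{2})\Big).
\]
These are finite, and $l(c)=l(p^{(0)})\le L\le 2\lambda_G$, which gives the second assertion; it remains to bound $d^{\mathcal R}_G\big(t^{\lambda_G}\odot c,\ t^{\lambda_G+l(c)/2}\big)$.

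Two preliminary observations. \textbf{(a)} Every walk $p^{(i)}$ of a square decomposition of a cycle is itself a cycle starting at $c_0$: this holds for $p^{(0)}=c$, and both $\cdot\oplus_k s$ and $\varphi$ leave the endpoints of a walk unchanged, so if $p^{(i)}$ and $p^{(i+1)}$ differ by a square then they have the same, equal, endpoints. Hence Lemma~\ref{lemma.small.steps} applies to each consecutive pair, with $t$ a spine on $c_0$. \textbf{(b)} Prepending a fixed power of $t$ to both arguments does not increase $d^{\mathcal R}_G$: adjacency in $\Delta_G^{N}$ is entrywise $G$-adjacency (Lemma~\ref{lem:localglobal}), so if $u\,\mathcal R_\varepsilon\,v$ for cycles $u,v$ of equal length starting at $c_0$ then $t^a\odot u\,\mathcal R_\varepsilon\, t^a\odot v$, because over the prepended block every column pairs $t_0$ with $t_1$ or $t_1$ with $t_0$ (both edges of $G$) and their alternation matches the unit left (resp. right) shift; by the path-metric definition this yields $d^{\mathcal R}_G(t^a\odot u,\ t^a\odot v)\le d^{\mathcal R}_G(u,v)$.

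Now set $a_i:=\lambda_G+\tfrac12\big(l(c)-l(p^{(i)})\big)$ for $0\le i\le m_c$. Since $\lambda_G\ge L\ge l(p^{(i)})$ we have $a_i\ge 0$, with $a_0=\lambda_G$ and $a_{m_c}=\lambda_G+l(c)/2$ (as $p^{(m_c)}$ is the empty cycle). The value $a_i+\tfrac12 l(p^{(i)})$ is the constant $N:=\lambda_G+\tfrac12 l(c)$, so all cycles $t^{a_i}\odot p^{(i)}$ have length $2N$ and start at $c_0$, with $t^{a_0}\odot p^{(0)}=t^{\lambda_G}\odot c$ and $t^{a_{m_c}}\odot p^{(m_c)}=t^{\lambda_G+l(c)/2}$. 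It suffices to show $d^{\mathcal R}_G\big(t^{a_i}\odot p^{(i)},\ t^{a_{i+1}}\odot p^{(i+1)}\big)\le\max(2,\tfrac{L}{2})$ for each $i<m_c$, since then, by the triangle inequality,
\[
d^{\mathcal R}_G\big(t^{\lambda_G}\odot c,\ t^{\lambda_G+l(c)/2}\big)\le\sum_{i=0}^{m_c-1}d^{\mathcal R}_G\big(t^{a_i}\odot p^{(i)},\ t^{a_{i+1}}\odot p^{(i+1)}\big)\le m_c\cdot\max(2,\tfrac{L}{2})\le\lambda_G.
\]
To prove the per-step bound, assume without loss of generality $l(p^{(i)})\ge l(p^{(i+1)})$ (otherwise exchange $i$ and $i+1$: $d^{\mathcal R}_G$ is symmetric and $t$ is also a spine on $c_0=p^{(i+1)}_0$). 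Put $\delta:=\tfrac12\big(l(p^{(i)})-l(p^{(i+1)})\big)$, so that $a_{i+1}=a_i+\delta$ and $t^{a_{i+1}}\odot p^{(i+1)}=t^{a_i}\odot\big(t^{\delta}\odot p^{(i+1)}\big)$. Lemma~\ref{lemma.small.steps} gives $d^{\mathcal R}_G\big(p^{(i)},\ t^{\delta}\odot p^{(i+1)}\big)\le\max(2,\delta)\le\max(2,\tfrac{L}{2})$, since $\delta\le\tfrac12 l(p^{(i)})\le\tfrac{L}{2}$; prepending $t^{a_i}$ and using observation \textbf{(b)} yields the claim.

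The step I expect to be the main obstacle is making observation \textbf{(b)} precise --- verifying that after prepending $t^a$ a unit left or right shift of the second argument still produces an edge of $\Delta_G^{2N}$, which comes down to a parity check on the spine block --- together with the (easy but necessary) remark in \textbf{(a)} that the walks of a square decomposition are genuine cycles, so that Lemma~\ref{lemma.small.steps} can be invoked. The rest is bookkeeping: the spine counts $a_i$ telescope precisely because $a_i+\tfrac12 l(p^{(i)})$ is constant along the decomposition.
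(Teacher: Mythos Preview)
Your proof is correct and follows essentially the same route as the paper: set $\gamma^{(j)}=t^{a_j}\odot p^{(j)}$ with $a_j$ chosen so that all lengths coincide, bound consecutive $d^{\mathcal R}_G$-distances via Lemma~\ref{lemma.small.steps}, and sum by the triangle inequality. The only differences are cosmetic: your constant $\lambda_G$ is a coarser upper bound than the paper's $\max_{c\in\mathcal C_G^0}\min_{(c^{(i)})\in\mathcal D_c}\sum_i\max\bigl(2,\tfrac12|l(c^{(i+1)})-l(c^{(i)})|\bigr)$, and you spell out explicitly the two points the paper leaves tacit---that each $p^{(i)}$ is a cycle on $c_0$ (your observation~(a)) and that prepending a fixed power of $t$ does not increase $d^{\mathcal R}_G$ (your observation~(b)).
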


\begin{proof}
Recall that $m_c$ denotes the area of $c$ (see Definition~\ref{definition.decomposable}).
Denote by $\mathcal{D}_c$ the set of decompositions of length $m_c$ of a cycle $c$.
We define: 
\[\lambda_G := \max_{c \in \mathcal{C}_G^0} \min_{\left( c^{(i)}\right) \in \mathcal{D}_c} \left(\sum_{i \le m_c} \max \left(2, \left|\frac{l(c^{(i+1)})-l(c^{(i)})}{2}\right|\right)\right).\]
The definition should make clear that $l(c)\leq 2\lambda_G$ for all square-decomposable cycles $c$.
Let $(c^{(i)})_{0\le i\le m_c}$ be a square decomposition of $c$ that realizes the minimum in the definition of $\lambda_G$.
For all $j \le m_c$, let us set $\gamma^{(j)} : = t^{\lambda_G + (l(c) - l(c^{(j)}))/2} \odot c^{(j)}$. This sequence is well-defined because for all $j$, $\sum_{0\leq k< j}\left|\frac{l(c^{(k+1)})-l(c^{(k)})}{2} \right| \le \lambda_G$, and as a consequence:
\[\lambda_G + (l(c) - l(c^{(j)}))/2 \ge 0.\]
As a consequence of Lemma~\ref{lemma.small.steps}, for all $j \le m_c-1$, $m_c d^{\mathcal{R}}_G (\gamma^{(j)},\gamma^{(j+1)}) \le \lambda_G$. 
Thus by triangular inequality we get the statement of the lemma.
\end{proof}

\begin{corollary}\label{cor:reduction2m}
Let $c$ be a simple cycle which is decomposable into squares and let $t$ be a spine on $c_0$. We have:
\[
	d^{\mathcal{R}}_G(t^{\lambda_G} \odot c^{\lambda_G}, t^{\lambda_G + \lambda_G l(c)/2})\le \lambda_G ^2.
\]
\end{corollary}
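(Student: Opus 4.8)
The statement is a direct quantitative corollary of Lemma~\ref{lemma.cycle.reduction}, obtained by iterating it $\lambda_G$ times. First I would observe that $c$ is decomposable into squares, so each power $c^j$ (for $j \ge 1$) is a cycle starting and ending at $c_0$, and it too is decomposable into squares (concatenate $j$ copies of a square decomposition of $c$, noting $l(c^j) = j\,l(c)$). So Lemma~\ref{lemma.cycle.reduction} applies to each $c^j$: for any spine $t$ on $c_0$,
\[
d^{\mathcal{R}}_G\!\left(t^{\lambda_G} \odot c^j,\ t^{\lambda_G + l(c^j)/2}\right) \le \lambda_G,
\qquad\text{i.e.}\qquad
d^{\mathcal{R}}_G\!\left(t^{\lambda_G} \odot c^j,\ t^{\lambda_G + j\,l(c)/2}\right) \le \lambda_G .
\]

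**The telescoping step.** The idea is to reduce the exponent $\lambda_G$ on $c$ down to $0$ one unit at a time, each time "absorbing" one copy of $c$ into the spine power at the cost of $\lambda_G$ steps of $d^{\mathcal{R}}_G$. Concretely, for $1 \le j \le \lambda_G$, set
\[
\gamma^{(j)} := t^{\lambda_G + (\lambda_G - j)\,l(c)/2} \odot c^{\,j},
\]
so that $\gamma^{(\lambda_G)} = t^{\lambda_G} \odot c^{\lambda_G}$ and $\gamma^{(0)} = t^{\lambda_G + \lambda_G l(c)/2}$ (reading the $j=0$ case as the pure spine power). I claim $d^{\mathcal{R}}_G(\gamma^{(j)}, \gamma^{(j-1)}) \le \lambda_G$ for each $j$. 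Indeed, $\gamma^{(j)} = t^{\lambda_G + (\lambda_G-j)l(c)/2} \odot (t' \odot c)$ where we view the inner $t^{\lambda_G} \odot c$ as sitting after a common prefix $t^{(\lambda_G - j)l(c)/2 + (\lambda_G - 1)}$... more cleanly: apply Lemma~\ref{lemma.cycle.reduction} with the single cycle $c$ and the spine $t$, to get $d^{\mathcal{R}}_G(t^{\lambda_G}\odot c, t^{\lambda_G + l(c)/2}) \le \lambda_G$; since $d^{\mathcal{R}}_G$ is translation-invariant under prepending a fixed cycle (a left/right shift of a suffix of a concatenation $w \odot z$ is still a left/right shift of $w \odot z$, so $d^{\mathcal{R}}_G(w \odot z, w \odot z') \le d^{\mathcal{R}}_G(z,z')$), prepending $t^{(\lambda_G-j)l(c)/2} \odot t^{(j-1)\cdot 0}\cdots$ gives exactly $d^{\mathcal{R}}_G(\gamma^{(j)}, \gamma^{(j-1)}) \le \lambda_G$, after checking the spine exponents match up: $\lambda_G + (\lambda_G - j)l(c)/2$ followed by $\lambda_G$ more, total exponent on the left is $\lambda_G + (\lambda_G-j)l(c)/2$ plus the $\lambda_G$-cycle $t^{\lambda_G}\odot c$, and on the right $t^{\lambda_G + (\lambda_G-j)l(c)/2 + \lambda_G + l(c)/2} = t^{\lambda_G + (\lambda_G - (j-1))l(c)/2 + \cdots}$ — these align with $\gamma^{(j-1)}$ after absorbing the constant spine overhead into $\lambda_G$ (this is where one uses that $\lambda_G$ in Lemma~\ref{lemma.cycle.reduction} already includes slack, or one simply notes the exponents differ by the fixed amount the lemma prescribes).

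**Conclusion and the main obstacle.** Summing by the triangle inequality for $d^{\mathcal{R}}_G$ over $j = 1, \dots, \lambda_G$:
\[
d^{\mathcal{R}}_G\!\left(t^{\lambda_G} \odot c^{\lambda_G},\ t^{\lambda_G + \lambda_G l(c)/2}\right)
= d^{\mathcal{R}}_G\!\left(\gamma^{(\lambda_G)}, \gamma^{(0)}\right)
\le \sum_{j=1}^{\lambda_G} d^{\mathcal{R}}_G\!\left(\gamma^{(j)}, \gamma^{(j-1)}\right)
\le \lambda_G \cdot \lambda_G = \lambda_G^2,
\]
which is the claim. The main obstacle — really the only nontrivial point — is bookkeeping the exact spine exponents so that each intermediate step is genuinely an instance of Lemma~\ref{lemma.cycle.reduction} prepended by a common fixed cycle, and verifying that $d^{\mathcal{R}}_G$ behaves well under such prepending (left/right shifts only touch the last few letters or the first letter of a concatenation, so they commute with prepending a fixed prefix). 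Alternatively, one can avoid the per-step accounting entirely by applying Lemma~\ref{lemma.cycle.reduction} directly to the cycle $c^{\lambda_G}$: this gives $d^{\mathcal{R}}_G(t^{\lambda_G}\odot c^{\lambda_G}, t^{\lambda_G + \lambda_G l(c)/2}) \le \lambda_G \le \lambda_G^2$ in one shot — but one must first confirm $c^{\lambda_G}$ is square-decomposable, which it is, and the bound $\lambda_G^2$ is then not even tight. I would present the one-shot argument as the clean proof and mention the telescoping view as intuition for why $\lambda_G^2$ (rather than $\lambda_G$) is the natural bound when the decomposition is done copy-by-copy in the downstream parallelization.
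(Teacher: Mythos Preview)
Your telescoping argument is correct and is exactly the intended (unwritten) proof of the corollary: set $\gamma^{(j)} = t^{\lambda_G + (\lambda_G-j)l(c)/2}\odot c^{\,j}$, note that
\[
\gamma^{(j)} = t^{(\lambda_G-j)l(c)/2}\odot\bigl(t^{\lambda_G}\odot c\bigr)\odot c^{\,j-1},
\qquad
\gamma^{(j-1)} = t^{(\lambda_G-j)l(c)/2}\odot t^{\lambda_G+l(c)/2}\odot c^{\,j-1},
\]
and apply Lemma~\ref{lemma.cycle.reduction} to the middle factor. The ``prepending/appending invariance'' of $d^{\mathcal{R}}_G$ that you worried about is a special case of Lemma~\ref{lemma.parallelisation} (one block, with the fixed prefix and suffix playing the role of $\textbf{r}$). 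Your exponent bookkeeping, once written out as above, is clean; there is no hidden slack to absorb.

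However, your proposed ``one-shot'' alternative is not valid. Lemma~\ref{lemma.cycle.reduction}, despite its wording, is really a statement about \emph{simple} cycles: the constant $\lambda_G$ is defined as a maximum over $\mathcal{C}_G^0$, the proof takes a decomposition realising that maximum, and the second clause $l(c)\le 2\lambda_G$ makes this explicit. The cycle $c^{\lambda_G}$ is not simple and has $l(c^{\lambda_G})=\lambda_G\, l(c)$, which is $>2\lambda_G$ as soon as $l(c)\ge 4$; so the lemma simply does not apply to it, and you cannot conclude $d^{\mathcal{R}}_G(t^{\lambda_G}\odot c^{\lambda_G},t^{\lambda_G+\lambda_G l(c)/2})\le\lambda_G$ in one stroke. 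The factor $\lambda_G^2$ in the corollary is not slack but genuinely the cost of $\lambda_G$ sequential applications of the lemma---which is also how the corollary is invoked later in the proof of Lemma~\ref{lem:lognm}. Drop the one-shot paragraph and keep the telescoping.
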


\subsection{Parallelization\label{section.spine.parallelisation}}

In this section, we show how to perform the transformations defined in Section~\ref{section.additional.spines} in parallel on different parts of a cycle\textit{}. 

We introduce (Notation~\ref{notation.gamma}) $\Gamma^{\textbf{r}}_{\textbf{z}} (d)$ which is a cycle obtained by ``inserting'' a sequence of cycles $\textbf{r}$ inside the cycle $d$ at positions given by $\textbf{z}$, in a similar way as in the definition of a cactus. This lets us consider transformations on different parts of $d$ while leaving unchanged the inserted cycles of the sequence $\textbf{r}$.

The main results of this section are Lemma~\ref{lem:lognm} and Lemma~\ref{proposition.log.upper.bound.general}, which prove respectively that the words of $\textbf{r}$ in  $\Gamma^{\textbf{r}}_{\textbf{z}}(d)$ can be moved in parallel; and that transformations such as the ones of Section~\ref{section.additional.spines} can be performed in parallel on the different parts of $d$. 

\begin{notation}\label{notation.gamma}
Let $\textbf{z} = (\textbf{z}_0, \dots , \textbf{z}_l)$ be a sequence of non-negative integers and $\textbf{r}=(r^{(j)})_{0 \le j \le l}$ be a sequence of cycles all beginning and ending at some vertex $a\in V_G$. For any cycle $d$ of length $\sum_{j=0}^l \textbf{z}_j$ such that for all $j \in \llbracket 0,l\rrbracket$, $d_{\textbf{z}_0 + \ldots + \textbf{z}_j} = d_0 = a$, we define:
\[
\Gamma^{\textbf{r}}_{\textbf{z}}(d) := d_{|\llbracket 0, \textbf{z}_0 \rrbracket} \odot r^{(0)} \odot d_{|\textbf{z}_0 + \llbracket 0, \textbf{z}_1\rrbracket} \odot \hdots \odot d_{| \textbf{z}_0 + \ldots +\textbf{z}_{l-1}+\llbracket 0 , \textbf{z}_{l}\rrbracket} \odot r^{(l)}.
\]
\end{notation}

\begin{lemma} \label{lemma.parallelisation}
For: 
\begin{itemize}
\item any sequence $\textbf{r}=(r^{(j)})_{0 \le j \le l}$ of cycles all beginning and ending at some vertex $a\in V_G$ and any sequence $\textbf{z}=(z^{(j)})_{0 \le j \le l}$ of integers, 
\item any sequence $(\epsilon_k)_{0 \le k \le m-1}$ in $\{0,1\}^{m}$,
\item any double sequence of cycles $(c^{(k,i)})_{(k,i) \in \llbracket 0,m \rrbracket \times \llbracket 0,l\rrbracket}$ which all begin and end at $a$, such that $l\left(c^{(k,i)}\right)=z_i$ and $c^{(k+1,i)} \mathcal{R}_{\epsilon_k} c^{(k,i)}$ for all $(k,i)$,
\end{itemize}
we have:
\[d^{\mathcal{R}}_G(\Gamma^{\textbf{r}}_{\textbf{z}}(c^{(0,0)} \odot \dots \odot c^{(0,l)}), \Gamma^{\textbf{r}}_{\textbf{z}}(c^{(m,0)} \odot \dots \odot c^{(m,l)})) \le m.\] 
\end{lemma}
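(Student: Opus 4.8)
The idea is to realize the parallel transformation $\Gamma^{\textbf{r}}_{\textbf{z}}(c^{(0,0)}\odot\dots\odot c^{(0,l)})\to\Gamma^{\textbf{r}}_{\textbf{z}}(c^{(m,0)}\odot\dots\odot c^{(m,l)})$ one $\mathcal{R}$-step at a time, where the $k$-th step performs the transformation $c^{(k,i)}\mathcal{R}_{\epsilon_k}c^{(k+1,i)}$ \emph{simultaneously} in every block $i$. First I would set, for $0\le k\le m$,
\[
D^{(k)} := c^{(k,0)}\odot\dots\odot c^{(k,l)},\qquad E^{(k)} := \Gamma^{\textbf{r}}_{\textbf{z}}(D^{(k)}),
\]
noting that each $D^{(k)}$ is a well-defined cycle of length $\sum_{i=0}^l z_i$ passing through $a$ at each of the partial sums $z_0+\dots+z_i$ (since every $c^{(k,i)}$ begins and ends at $a$), so that $E^{(k)}=\Gamma^{\textbf{r}}_{\textbf{z}}(D^{(k)})$ makes sense. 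The goal is then to show $d^{\mathcal R}_G(E^{(k)},E^{(k+1)})\le 1$ for every $k<m$, and conclude by the triangle inequality for $d^{\mathcal R}_G$.

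For the single-step claim, fix $k$ and assume $\epsilon_k=0$ (the case $\epsilon_k=1$ is symmetric, exchanging left and right shifts). By definition of $\mathcal{R}_0$, for each $i$ there is a left shift $\widehat{c}^{(k,i)}$ of $c^{(k+1,i)}$ that is a neighbour of $c^{(k,i)}$ in $\Delta_G^{z_i}$; write $\widehat{c}^{(k,i)} = c^{(k+1,i)}_1\dots c^{(k+1,i)}_{z_i}x_i$ for some $x_i\in V_G$. The key structural observation is that the concatenation (interleaved with the fixed cycles $r^{(j)}$, which start and end at $a$) of these left shifts is \emph{itself} a left shift of $E^{(k+1)}$, provided the shifted blocks still match up at the junction vertices. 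Since each $c^{(k+1,i)}$ starts and ends at $a$, a left shift of it starts at $c^{(k+1,i)}_1$ and ends at $x_i$; to glue blocks together and insert the $r^{(j)}$ (which begin/end at $a$) one needs the right endpoints to be $a$ and the left endpoints to match — this is exactly the place where I would check that one instead builds a common left shift of the whole cycle $E^{(k)}$, namely the walk obtained by left-shifting \emph{all of $E^{(k)}$ at once}. Concretely: because each $\widehat{c}^{(k,i)}$ is a neighbour of $c^{(k,i)}$ in $\Delta_G^{z_i}$, the interleaved word $F := \widehat{c}^{(k,0)}\odot r^{(0)}\odot\widehat{c}^{(k,1)}\odot\dots\odot r^{(l)}$ is a neighbour of $E^{(k)}$ in $\Delta_G^{l(E^{(k)})}$ — vertical adjacency of walks is checked coordinate-by-coordinate, and the $r^{(j)}$-coordinates are adjacent to themselves because $G$ has all self-loops along... (no: one uses that $r^{(j)}$ is adjacent to $r^{(j)}$ in $\Delta_G^{l(r^{(j)})}$ because a constant vertical strip over the walk $r^{(j)}$ is a legitimate pattern of $X_G$). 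Moreover $F$ is a left shift of $E^{(k+1)}$ by the same coordinatewise inspection. Hence $E^{(k)}\mathcal{R}_0 E^{(k+1)}$, i.e. $d^{\mathcal R}_G(E^{(k)},E^{(k+1)})=1$.

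Putting the pieces together, $d^{\mathcal R}_G(E^{(0)},E^{(m)})\le\sum_{k=0}^{m-1}d^{\mathcal R}_G(E^{(k)},E^{(k+1)})\le m$, which is the statement. The main obstacle I anticipate is the bookkeeping in the single-step claim: one has to verify carefully that a simultaneous left shift of the (interleaved) cycle $E^{(k)}$ restricts, on each $c$-block, to the prescribed left shift $\widehat{c}^{(k,i)}$ witnessing $c^{(k,i)}\mathcal{R}_0 c^{(k+1,i)}$, and that the inserted cycles $r^{(j)}$ are left unmodified and cause no adjacency violation at the block junctions. All of this reduces to the elementary fact that adjacency in $\Delta_G^n$ is a purely local (coordinatewise in $V_G$, via $X_G^{(n)}$) condition, together with the already-noted remark (Remark~\ref{remark.shift.neighbors}) that any walk is $\Delta_G$-adjacent to each of its shifts; no genuinely new idea beyond organizing these observations is required.
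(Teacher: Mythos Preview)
Your overall plan is correct and matches the paper's: reduce to showing the single-step relation
\[
\Gamma^{\textbf{r}}_{\textbf{z}}(c^{(k+1,0)}\odot\dots\odot c^{(k+1,l)})\ \mathcal R_{\epsilon_k}\ \Gamma^{\textbf{r}}_{\textbf{z}}(c^{(k,0)}\odot\dots\odot c^{(k,l)})
\]
and conclude by the triangle inequality. However, your verification of this single step contains a genuine error.

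You build the witness as $F := \widehat{c}^{(k,0)}\odot r^{(0)}\odot\widehat{c}^{(k,1)}\odot\dots\odot r^{(l)}$ with the $r^{(j)}$ \emph{unchanged}, and then argue that $F$ is a $\Delta_G$-neighbour of $E^{(k)}$ coordinatewise. On the $r$-blocks this would require $r^{(j)}$ to be adjacent to itself in $\Delta_G$, i.e.\ every vertex on $r^{(j)}$ to have a self-loop; your own parenthetical (``a constant vertical strip over the walk $r^{(j)}$ is a legitimate pattern of $X_G$'') makes exactly this false assumption. Moreover the $\odot$-concatenation in your definition of $F$ is ill-formed: each $\widehat{c}^{(k,i)}$ starts at $c^{(k+1,i)}_1$ (a neighbour of $a$) and ends at some $x_i$, so the endpoints do not match the endpoints $a$ of the $r^{(j)}$.

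The fix is the one you gesture at but do not carry out: take $F$ to be the \emph{global} left shift of the entire cycle $E^{(k)}$ (note also that the hypothesis $c^{(k+1,i)}\mathcal R_0 c^{(k,i)}$ gives a left shift of $c^{(k,i)}$ adjacent to $c^{(k+1,i)}$, so you should be aiming for a left shift of $E^{(k)}$ adjacent to $E^{(k+1)}$, not the reverse). Restricted to the $i$-th $c$-block, this global left shift is $c^{(k,i)}_1\dots c^{(k,i)}_{z_i}\,r^{(i)}_1$; the first $z_i$ coordinates are adjacent to $c^{(k+1,i)}$ by hypothesis, and the last coordinate $r^{(i)}_1$ is a neighbour of $c^{(k+1,i)}_{z_i}=a$ automatically. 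Restricted to the $i$-th $r$-block, the global left shift is a \emph{left shift} of $r^{(i)}$ (not $r^{(i)}$ itself), and this is adjacent to $r^{(i)}$ by Remark~\ref{remark.shift.neighbors}. The paper compresses exactly this into one sentence: ``$r^{(i)}\mathcal R_\epsilon r^{(i)}$, and the second letter and the penultimate letter of $r^{(i)}$ are both neighbours of $a$''.
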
 

\begin{proof}
It is sufficient to see for all $k<m$, we have 
\[\Gamma^{\textbf{r}}_{\textbf{z}}(c^{(k+1,0)} \odot \dots \odot c^{(k+1,l)}) \mathcal{R}_{\epsilon_k} \Gamma^{\textbf{r}}_{\textbf{z}}(c^{(k,0)} \odot \dots \odot c^{(k,l)}).\]
This derives from the fact that $r^{(i)} \mathcal{R}_{\epsilon} r^{(i)}$ for all $i$ and $\epsilon$, and that the second letter and the penultimate letter of $r^{(i)}$ are both neighbors of $a$. 
\end{proof}

\begin{remark}\label{lemma.parallelisation.second}
If we have a sequence of cycles \[
(c^{(i,j)})_{\substack{1 \le i \le k\\0 \le j \le l_i}}
\]
such that $c^{(i,j)} \mathcal{R}_0 c^{(i,j+1)}$  for all $i$ and $j < l_i$, we can complete this sequence into 

\[
(c^{(i,j)})_{\substack{1 \le i \le k\\0 \le j \le L}},
\]
where $L := \max_i l_i$, by setting $c^{(i,j)} := c^{(i,l_i)}$
when $j > l_i$, and then apply Lemma~\ref{lemma.parallelisation}
in order to get: 
\[d^{\mathcal{R}}_G(\Gamma^{\textbf{r}}_{\textbf{z}}(c^{(1,0)}\odot c^{(2,0)} \odot \hdots \odot c^{(k,0)}) , \Gamma^{\textbf{r}}_{\textbf{z}}(c^{(1,L)}\odot c^{(2,L)} \odot \hdots \odot c^{(k,L)}) ) \le L,\]
where $\textbf{z} = (l(c^{(i)}))_{1\leq i\leq k}$.
\end{remark}

\begin{lemma}\label{lem:changing-sequence}
Let us fix $\textbf{r}=(r^{(j)})_{0 \le j \le l}$ a sequence of cycles all beginning and ending at some vertex $a\in V_G$.
Let us also consider a pair $(\textbf{z}$, $\textbf{z}')$ for which there exists an increasing sequence $(j_i)_{1 \le i \le 2\tau}$ of integers in $\llbracket 0, l \rrbracket$ such that: 
\begin{itemize}
\item $\forall i \in \llbracket 1, 2t\rrbracket$, $\textbf{z}_{j_i} = \textbf{z}'_{j_i} + 2\cdot(-1)^{i}$;
\item for all $j$ which are not in the sequence $(j_i)_{1 \le i \le 2\tau}$, $\textbf{z}'_{j} = \textbf{z}_{j}$.
\end{itemize}
Then, for all cycles $d$ such that $\Gamma^{\textbf{r}}_{\textbf{z}}(d)$ and $\Gamma^{\textbf{r}}_{\textbf{z}'}(d)$ are well defined, we have 
\[
d_G^{\mathcal{R}}(\Gamma^{\textbf{r}}_{\textbf{z}}(d), \Gamma^{\textbf{r}}_{\textbf{z}'}(d))=1.
\]
\end{lemma}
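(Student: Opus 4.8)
The plan is to reduce the general statement to the case $\tau = 1$, i.e. a single ``swap'' at two consecutive-in-the-list positions $j_1 < j_2$ where $\textbf{z}$ increases by $2$ at $j_1$ and decreases by $2$ at $j_2$ (or the reverse), and then handle that case by exhibiting an explicit common neighbor in $\Delta_G^N$ (with $N$ the common length of the two cycles), realizing the $\mathcal{R}_0$ or $\mathcal{R}_1$ relation. The reduction to $\tau=1$ is not quite a plain induction since the hypothesis pairs up the modified positions two at a time with alternating signs; but one observes that applying the $\tau=1$ move to the pair $(j_1,j_2)$ changes $\textbf{z}$ into an intermediate sequence $\textbf{z}''$ which still satisfies the hypothesis relative to $\textbf{z}'$ with the sequence $(j_i)_{3\le i\le 2\tau}$, so $d_G^{\mathcal{R}}$ is at most $\tau$ — but the lemma claims it is exactly $1$. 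So in fact we should \emph{not} induct on $\tau$ at all: instead we must directly build, for the full $\textbf{z}\to\textbf{z}'$ transition, a single cycle $e$ with $\Gamma^{\textbf r}_{\textbf z}(d)\,\mathcal R_{\epsilon}\,e$ and $\Gamma^{\textbf r}_{\textbf z'}(d)$ equal to a one-vertex shift of $e$ adjacent to it. The point of the alternating signs is precisely that the total length is preserved ($\sum \textbf{z}_j = \sum \textbf{z}'_j$), so the two $\Gamma$-cycles do have the same length $N$ and comparing them in $\Delta_G^N$ makes sense.

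The key construction: write $\Gamma^{\textbf r}_{\textbf z}(d)$ and $\Gamma^{\textbf r}_{\textbf z'}(d)$ by splicing the same blocks $r^{(0)},\dots,r^{(l)}$ at positions dictated by $\textbf z$ resp. $\textbf z'$. Between the ``boundary'' vertices $d_0 = a$ — all of which equal $a$, by the well-definedness hypotheses $d_{\textbf z_0+\dots+\textbf z_j}=a$ and $d'_{\textbf z'_0+\dots+\textbf z'_j}=a$ — the walk $d$ traverses a segment of $\textbf z_j$ (resp. $\textbf z'_j$) edges. At a position $j_i$ where the block length decreases by $2$, $d$ contains a spine-like excursion $\dots a\, b\, a\dots$ that we absorb; where it increases by $2$, we insert a spine $a\,b\,a$. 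Since $\epsilon$ is fixed throughout, the natural candidate for $e$ is the left (if $\epsilon=0$) or right (if $\epsilon=1$) shift of $\Gamma^{\textbf r}_{\textbf z'}(d)$ by one position; one then checks that $e$ is a neighbor of $\Gamma^{\textbf r}_{\textbf z}(d)$ in $\Delta_G^N$ — equivalently (Remark~\ref{remark.shift.neighbors} and the locally-to-globally argument of Lemma~\ref{lem:localglobal}) that there is a locally admissible $2\times(N+1)$ pattern with these two rows — using exactly that the blocks $r^{(i)}$ begin and end at $a$ and that $a$ has a neighbor available to form the inserted/absorbed spines, as in the proof of Lemma~\ref{lemma.parallelisation}. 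The column-by-column check is where all the bookkeeping lives: away from the $j_i$'s the two rows agree up to the fixed global shift; near each $j_i$ one writes out the small $2\times 5$ (or $2\times 3$) window as in the tables of Lemma~\ref{lemma.small.steps} and verifies adjacency in $G$ column-wise and the shift condition row-wise.

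The main obstacle I expect is precisely making the single common neighbor $e$ work \emph{simultaneously} for all $\tau$ swaps with a single global shift direction $\epsilon$: one must see that the alternating-sign condition is exactly what allows the local repairs at $j_1,\dots,j_{2\tau}$ to be carried out with a consistent one-step shift (each ``$-2$'' swap shifts the tail of the block arrangement back by $2$, each ``$+2$'' swap pushes it forward by $2$, and pairing them keeps the cumulative offset at every boundary vertex equal to $0$ or $\pm1$, hence realizable as an $\mathcal R_0$ or $\mathcal R_1$ step). Once that combinatorial alignment is set up, the verification that the resulting $2\times(N+1)$ array is locally admissible — and hence, the support being block-like, globally admissible — is routine, analogous to the case analysis already carried out in Lemma~\ref{lemma.small.steps} and Lemma~\ref{lemma.parallelisation}, so I would present it via one representative window diagram rather than in full generality.
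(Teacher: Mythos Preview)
Your plan is essentially correct and follows the same underlying idea as the paper, though the paper's execution is cleaner. The paper makes exactly the observation you do---that well-definedness forces $d_{\sum_{i\le j}\textbf z_i}=d_{\sum_{i\le j}\textbf z'_i}=a$, so at every index where the partial-sum offset is $2$ the segment of $d$ ends in a spine $t^{(j)}=a b a$---but then, instead of a column-by-column check, it simply writes
\[
\Gamma^{\textbf r}_{\textbf z}(d)=\bigodot_j \bigl(d^{(j)}\odot(t^{(j)}\odot r^{(j)})\bigr),\qquad
\Gamma^{\textbf r}_{\textbf z'}(d)=\bigodot_j \bigl(d^{(j)}\odot(r^{(j)}\odot t^{(j)})\bigr),
\]
observes that $t\odot r\ \mathcal R_0\ r\odot t$ for any spine $t$ on $a$ and any cycle $r$ at $a$, and invokes Lemma~\ref{lemma.parallelisation} with the $d^{(j)}$ playing the role of the fixed blocks. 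This is exactly the ``single global shift'' you are aiming for, already packaged as a lemma.

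Two small points where your write-up is slightly off. First, the cumulative offset $\sum_{i\le j}(\textbf z'_i-\textbf z_i)$ takes the values $0$ or $2$, not ``$0$ or $\pm 1$''; the $\pm 1$ you have in mind is the single-step shift in the $\mathcal R_0$ relation, which is a different quantity. Second, ``away from the $j_i$'s the two rows agree up to a fixed global shift'' is not literally true: in regions of offset $0$ the two cycles coincide, in regions of offset $2$ they differ by a shift of $2$. What makes a \emph{single} $\mathcal R_0$ step work is precisely that a left shift by $1$ of a walk is always a neighbour of that walk, so in both regions the shifted cycle sits column-wise adjacent. That is what the spine decomposition plus Lemma~\ref{lemma.parallelisation} encode; you would rediscover it in your column-by-column check, but citing the lemma is shorter.
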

\begin{proof}
That $\Gamma^{\textbf{r}}_{\textbf{z}}(d)$ and $\Gamma^{\textbf{r}}_{\textbf{z}'}(d)$ are well-defined means that for all $j$,
\[d_{|\textbf{z}_0 + \ldots +\textbf{z}_{j}} = d_{|\textbf{z}'_0 + \ldots +\textbf{z}'_{j}} = a.\]
For any $j$, as a consequence of the hypothesis, the difference $\sum_{i \le j}\textbf{z}'_i - \sum_{i \le j}\textbf{z}_i$ is equal to $0$ or $2$. Whenever $\sum_{i \le j}\textbf{z}'_i - \sum_{i \le j}\textbf{z}_i = 2$,
since 
\[d_{|\textbf{z}_0 + \ldots +\textbf{z}_{j}} = d_{|\textbf{z}'_0 + \ldots +\textbf{z}'_{j}} = a,\]
this means that $d_{|\textbf{z}'_0 + \ldots +\textbf{z}'_{j}+\llbracket-2, 0\rrbracket}$ is a spine on $a$ that we denote by $t$. Therefore $d$ can be decomposed as follows: 
\[d = \left(d^{(0)} \odot t^{(0)}\right) \odot \ldots \odot \left(d^{(l)} \odot t^{(l)}\right),\]
where $t^{(j)}$ is a spine on $a$ when $\sum_{i<j}\textbf{z}'_i - \sum_{i<j}\textbf{z}_i = 2$ and $t^{(j)} =a$ (empty cycle) otherwise.
This is illustrated on Figure~\ref{fig:decomp}.
\begin{figure}[h!]
\begin{tikzpicture}
\draw (0,0.2) rectangle (12,0.8);
\node at (1.5,0.5) {$d^{(0)}$};
\node at (4.5,0.5) {$d^{(1)}$};
\node at (7.5,0.5) {$d^{(2)}$};
\node at (10.5,0.5) {$d^{(3)}$};

\draw (2.7, 0) -- ++(0,1);
\node at (3, 0.5) {$t^{(0)}$};
\draw (3.3, 0) -- ++(0,1);

\draw (6, 0) -- ++(0,1);

\draw (8.7, 0) -- ++(0,1);
\node at (9, 0.5) {$t^{(2)}$};
\draw (9.3, 0) -- ++(0,1);

\draw [<->] (0, -0.2) -- node[below]{$\textbf{z}_0$} (2.65, -0.2);
\draw [<->] (2.75, -0.2) -- node[below]{$\textbf{z}_1$} (5.95, -0.2);
\draw [<->] (6.05, -0.2) -- node[below]{$\textbf{z}_2$} (8.65, -0.2);
\draw [<->] (8.75, -0.2) -- node[below]{$\textbf{z}_3$} (12, -0.2);

\draw [<->] (0, 1.2) -- node[above]{$\textbf{z}'_0$} (3.25, 1.2);
\draw [<->] (3.35, 1.2) -- node[above]{$\textbf{z}'_1$} (5.95, 1.2);
\draw [<->] (6.05, 1.2) -- node[above]{$\textbf{z}'_2$} (9.25, 1.2);
\draw [<->] (9.35, 1.2) -- node[above]{$\textbf{z}'_3$} (12, 1.2);
\end{tikzpicture}
\caption{Decomposition of $d$ in the case where the sequence $(j_i)$ is $(0,1,2,3)$.}\label{fig:decomp}
\end{figure}
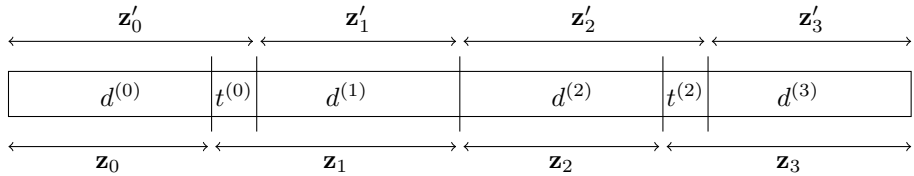

Furthermore, 

\[\Gamma^{\textbf{r}}_{\textbf{z}}(d) =  \left( d^{(0)} \odot \left( t^{(0)} \odot r^{(0)} \right) \right) \odot  \ldots \odot \left(d^{(l)} \odot \left( t^{(l)} \odot r^{(l)} \right)\right)\]

and

\[\Gamma^{\textbf{r}}_{\textbf{z}'}(d) =  \left( d^{(0)} \odot \left( r^{(0)} \odot t^{(0)} \right) \right) \odot  \ldots \odot \left(d^{(l)} \odot \left( r^{(l)} \odot t^{(l)} \right)\right)\]

Now the fact that $t\odot r^{(i)} \mathcal{R}_0 r^{(i)}\odot t$ for all $i$, together with Lemma~\ref{lemma.parallelisation}, implies the result.
\end{proof}


\begin{lemma}\label{lem:lognm}
Let us fix $\textbf{r}=(r^{(j)})_{0 \le j \le l}$ a sequence of cycles all beginning and ending at some vertex $a\in V_G$.
Let $c$ be a simple cycle decomposable into squares and $t$ a spine on $c_0$.
For all $n \ge0$, we have
\[
d_G^\mathcal{R}(\Gamma^{\textbf{r}}_\textbf{z}(t^{\lambda_G l(c)/2}\odot c^{(2^n-1)\lambda_G}),\ \Gamma^{\textbf{r}}_\textbf{z}(t^{2^{n-1} \lambda_G l(c)})) \le 30 \lambda_G^2 n
\]
where $\textbf{z} := (\lambda_G l(c), k_1 l(c), \ldots , k_l l(c))$ for any sequence of positive integers $(k_j)_{1 \le j \le l}$ such that $k_1 + \hdots + k_l = (2^n-1)\lambda_G$.
\end{lemma}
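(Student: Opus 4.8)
The plan is to prove the estimate by induction on $n$, with the inductive step consuming one ``layer'' of the doubling (passing from roughly $2^n$ copies of $c$ to roughly $2^{n-1}$ copies) at a cost bounded uniformly by $30\lambda_G^2$. We may assume $l(c)\ge 2$, since otherwise $c$ is the empty cycle, both arguments of $d_G^{\mathcal R}$ coincide, and the distance is $0$. Recall from Lemma~\ref{lemma.cycle.reduction} that $l(c)\le 2\lambda_G$, so the leading reservoir $t^{\lambda_G l(c)/2}$ contains between $\lambda_G$ and $\lambda_G^2$ spines; in particular $\lambda_G l(c)/2\ge \lambda_G$, which is exactly what is needed to feed Corollary~\ref{cor:reduction2m}. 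The base case $n=0$ is immediate: then $l=0$, $c^{(2^0-1)\lambda_G}$ is the empty cycle, and the two arguments literally coincide; the case $n=1$ can also be obtained directly from Corollary~\ref{cor:reduction2m} after splitting the reservoir, at cost $\le\lambda_G^2\le 30\lambda_G^2$.

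For the inductive step, assume the bound for $n-1$, and split the exponent via $(2^n-1)\lambda_G = 2(2^{n-1}-1)\lambda_G+\lambda_G$: the block $c^{(2^n-1)\lambda_G}$ decomposes into one ``middle'' batch $c^{\lambda_G}$, placed just after the leading reservoir, followed by two ``halves'' $c^{(2^{n-1}-1)\lambda_G}$. Stage~1 (cost $\le C\lambda_G^2$ for an absolute constant $C\le 30$): using Corollary~\ref{cor:reduction2m} one absorbs the middle batch $c^{\lambda_G}$ into $\lambda_G$ spines of the leading reservoir, turning $t^{\lambda_G l(c)/2}\odot c^{\lambda_G}$ into $t^{\lambda_G l(c)}=t^{\lambda_G l(c)/2}\odot t^{\lambda_G l(c)/2}$; then, using Lemma~\ref{lem:changing-sequence} and Remark~\ref{lemma.parallelisation.second} to slide the $r^{(j)}$'s and the spine blocks through the newly available spines, one rearranges the cycle so that it reads as two independent sub-instances glued together, each consisting of a fresh reservoir $t^{\lambda_G l(c)/2}$ immediately followed by one of the two halves, the remaining frozen material being distributed among the $r^{(j)}$'s. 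Stage~2 (cost $\le 30\lambda_G^2(n-1)$): apply the induction hypothesis to each sub-instance to replace its half by $t^{2^{n-2}\lambda_G l(c)}$, and run the two transformations in parallel by Lemma~\ref{lemma.parallelisation}; the result equals, up to a further bounded reorganization of spine powers and of the $r^{(j)}$'s, the cycle $\Gamma^{\textbf{r}}_{\textbf{z}}(t^{2^{n-1}\lambda_G l(c)})$. Summing, $d_G^{\mathcal R}\le C\lambda_G^2+30\lambda_G^2(n-1)\le 30\lambda_G^2 n$.

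Two points make the recursion legitimate. First, each sub-instance really is an instance of the statement for $n-1$: the ``other'' half together with all leftover cycles $r^{(j)}$ is folded into a new sequence $\textbf{r}'$, and the block sizes into a new $\textbf{z}'$, because (i) any cycle based at $a$ — in particular any concatenation of copies of $c$, of the $r^{(j)}$'s and of spine powers — is admissible as an entry of $\textbf{r}'$, its second and penultimate letters being neighbours of $a$ as Lemma~\ref{lemma.parallelisation} requires; and (ii) the integers making up $\textbf{z}'$ can be chosen freely subject only to positivity and the prescribed sum, which leaves exactly enough room to accommodate the block structure inherited from $\textbf{z}$ together with the new spine blocks. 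Second, the two recursive transformations act on identical sub-cycles, hence can be taken to follow a common sequence of shift directions $(\epsilon_k)_k$, which is precisely the hypothesis under which Lemma~\ref{lemma.parallelisation} and Remark~\ref{lemma.parallelisation.second} allow the two running times to be merged rather than added.

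The main obstacle is Stage~1: a priori, relocating a copy of the spine reservoir in front of \emph{each} half would require a number of elementary shifts proportional to the \emph{length} of the cycle, which would be fatal. The $\Gamma$-formalism is tailored to avoid this: the unprocessed copies of $c$ and the cycles $r^{(j)}$ are kept frozen and promoted to entries of $\textbf{r}'$, so only a number of boundary manipulations bounded \emph{independently of $n$} is ever needed — each of cost $1$ via Lemma~\ref{lem:changing-sequence}, or of cost $O(\lambda_G^2)$ via Corollary~\ref{cor:reduction2m}. Verifying that these manipulations suffice to reach the two-sub-instance form, and that the accumulated constant is indeed at most $30$, is the routine but somewhat delicate bookkeeping the proof must carry out.
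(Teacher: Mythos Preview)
Your dichotomic idea is the right one, and it is morally the same as the paper's. But the way you package it as an induction on $n$ over the \emph{statement of the lemma} has a genuine gap at Stage~2. You want to invoke the hypothesis on each half and then ``run the two transformations in parallel by Lemma~\ref{lemma.parallelisation}''. Look at what the hypothesis actually hands you: a bound on $d_G^{\mathcal R}\bigl(\Gamma^{\mathbf r'_1}_{\mathbf z'_1}(d'),\Gamma^{\mathbf r'_1}_{\mathbf z'_1}(d'')\bigr)$, where $d'=t^{\lambda_G l(c)/2}\odot c^{(2^{n-1}-1)\lambda_G}$ and, as you yourself say, $\mathbf r'_1$ contains the \emph{other} half. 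That is a sequence of $\mathcal R$-moves on the \emph{whole} cycle in which the second half is frozen. The corresponding sequence for the second half freezes the first. These are two different walks on the same vertex of $\Delta_G$; there is nothing to merge. Lemma~\ref{lemma.parallelisation} does not compose two walks on the whole cycle --- it transforms several slots $c^{(k,i)}$ inside a \emph{single} $\Gamma$-structure under a common direction sequence $(\epsilon_k)$. Your claim that ``the two recursive transformations act on identical sub-cycles'' confuses the identical active parts $d'$ with the full objects $\Gamma^{\mathbf r'_i}_{\mathbf z'_i}(d')$, which are not identical (different $\mathbf r'_i$, different $\mathbf z'_i$, hence different $\mathbf z$-adjustments via Lemma~\ref{lem:changing-sequence}, hence in general different $(\epsilon_k)$-patterns). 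The induction hypothesis gives a \emph{bound}, not a canonical, $\mathbf r$-independent move sequence; you would need to strengthen the hypothesis to something like ``there exists a direction word $(\epsilon_k)$ of length $\le 30\lambda_G^2 n$, depending only on $c,t,n$, realising the transformation for every admissible $(\mathbf r,\mathbf z)$''.

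The paper sidesteps this exactly by \emph{not} inducting on the statement. It defines $\gamma_k=\bigl(t^{\lambda_G l(c)/2}\odot c^{(2^{n-k}-1)\lambda_G}\bigr)^{2^k}$ and shows directly, for each $k$, that $\Gamma^{\mathbf r}_{\mathbf z^{k-1}}(\gamma_{k-1})$ can be turned into $\Gamma^{\mathbf r}_{\mathbf z^{k}}(\gamma_{k})$ in at most $15\lambda_G^2$ $\mathcal R$-steps, by explicit moves (shift $2\lambda_G$ spines to the centre of each block, fix the $\mathbf z$-sequence with Lemma~\ref{lem:changing-sequence}, absorb one batch $c^{\lambda_G}$ per block via Corollary~\ref{cor:reduction2m}, fix $\mathbf z$ again). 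The point is that at every stage the $2^{k-1}$ blocks are \emph{literally identical words}, so the per-block moves are the same and Lemma~\ref{lemma.parallelisation} applies to them as slots of a single $\Gamma$. This is precisely the strengthened hypothesis above, but proved constructively rather than assumed. If you want to keep your inductive presentation, you should induct on the stronger statement ``for every $m\ge 1$ and every admissible $(\mathbf r,\mathbf z)$, $d_G^{\mathcal R}\bigl(\Gamma^{\mathbf r}_{\mathbf z}((t^{\lambda_G l(c)/2}\odot c^{(2^n-1)\lambda_G})^m),\,\Gamma^{\mathbf r}_{\mathbf z}((t^{2^{n-1}\lambda_G l(c)})^m)\bigr)\le 30\lambda_G^2 n$''; then after Stage~1 you land on the $(n-1,2m)$ case and no parallel merging of two black-box transformations is needed.
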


\paragraph{Sketch of the proof:} \textit{We transform a cycle of the form $t^{k}\odot c^{2^n}$ into a trivial cycle (containing only spines). To do this, we move enough copies of the spine $t$ from the left to the center of the word that we are able to apply results of Section~\ref{section.additional.spines} at the center of the word. This transforms a number of occurrences of $c$ at the center into spines; after this the word  consists in two identical blocks of the form $t^k  \odot c^{2^{n-1}}$. We repeat the previous transformation on each of these two blocks, etc. This `dichotomic' process finishes in time which is linear in $n$.}

\begin{proof}
When $l(c)=2$, there is nothing to prove. Let us assume that $l(c) \ge 4$.

\begin{enumerate}
\item \textbf{The dichotomic process:}
For all $k \le n$, let us set:
\[
\gamma_{k} := \left(t^{\lambda_G l(c)/2} \odot c^{\left(2^{n-k}-1\right) \lambda_G}\right)^{2^k},
\]
where $\lambda_G$ is as defined in Lemma~\ref{lemma.cycle.reduction}.
In particular $\gamma_0 = t^{l(c)\lambda_G /2}\odot c^{(2^n-1)\lambda_G}$ and $\gamma_n = t^{2^{n-1} \lambda_G l(c)}$.

Let us prove that there are sequences $\textbf{z}^k$, $k \le n$, such that $\textbf{z}^0 = \textbf{z}$ and
\[
\forall k<n,\ d_G^\mathcal{R}(\Gamma^{\textbf{r}}_{\textbf{z}^k}(\gamma_{k}),\Gamma^{\textbf{r}}_{\textbf{z}^{k+1}}(\gamma_{k+1})) \le { 15 \lambda_G ^2}.
\]
\item \textbf{From $\gamma_{k-1}$ to $\gamma_k$ - moving spines:} Let us assume that $\textbf{z}^0, \dots , \textbf{z}^{k-1}$ have been defined, where $k \ge 1$. Let the sequence of cycles $(\gamma_{k,j})_{j \in\llbracket 0 ,2\lambda_G \rrbracket}$ be defined for all $k$ and $j$ as the concatenation of $2^{k-1}$ identical words that we call \textbf{blocks}:
\[
\gamma_{k,j} =\left(t^{\lambda_G l(c)/2-j} \odot c^{\left(2^{n-k}-1\right)\lambda_G} \odot t^j \odot c^{2^{n-k}\lambda_G} \right)^{2^{k-1}}.
\]
Informally, for all $j$, $\gamma_{k,j+1}$ is obtained from  $\gamma_{k,j}$ by `moving' a spine $t$ from the left to the right inside each block. This will also affect the sequence $\textbf{z}^{k,j}$ responsible for places of insertions of $\mathbf{r}$. Then by Lemma~\ref{lemma.parallelisation},
\[d^{\mathcal{R}}_G\left(\Gamma^{\textbf{r}}_{\textbf{z}^{k,j}}(\gamma_{k,j}), \Gamma^{\textbf{r}}_{\textbf{z}^{k,j}}(\gamma_{k,j+1})\right) = 1.\]
Let us construct a sequence $\left(\textbf{z}^{k,j}\right)_{j \in\llbracket 0 ,2\lambda_G \rrbracket}$, such that $\textbf{z}^{k,0} = \textbf{z}^{k-1}$. Let us assume that this sequence has been defined for some $j < 2\lambda_G$. 

In the definition of $\Gamma^{\textbf{r}}_{\textbf{z}^{k,j}}(\gamma_{k,j})$ it may happen that $\Gamma^\textbf{r}$ inserts an element of $\textbf{r}$ in between $t^{\lambda_G l(c)/2}$ and $t^j$, which changes the relative position of the spine being moved in the $i$-th block when constructing $\gamma_{k,j+1}$ from $\gamma_{k,j}$. To cancel out this movement, we define a quantity $\omega^{k,j+1}_q$ to be $1$ when this happens, and $0$ otherwise, and we define:
\begin{itemize}
    \item $\textbf{z}^{k,j+1}_0 = \textbf{z}^{k,j}_0  - 2 \omega^{k,j+1}_0$;
    \item for all $q>0$, $\textbf{z}^{k,j+1}_q = \textbf{z}^{k,j}_q  - 2 \omega^{k,j+1}_q + 2 \omega^{k,j+1}_{q-1}$.
\end{itemize}
Formally, denote $(\omega^{k,j+1}_q)_{0 \le q \le 
l} \in \{0,1\}^{l+1}$  such that for all 
$q$, 
$\omega^{k,j+1}_q = 1$ when there exists some $i \in \llbracket 0, 2^{k-1}-1\rrbracket$ for which 
\[\underbrace{\left(\lambda_G l(c) - 2j\right) + \overbrace{i \cdot \lambda_G 2^{n-k+1} l(c)}^{\text{the spine is in block $i+1$}}}_{\text{position of spine $t$ before moving}}  < \underbrace{\textbf{z}^{k,j}_0 + \ldots + \textbf{z}^{k,j}_q}_{\text{insertion position of $r^{(q)}$}}\]
and 
\[ \textbf{z}^{k,j}_0 + \ldots + \textbf{z}^{k,j}_q \leq \underbrace{\left(2^{n-k} \lambda_G l(c) - 2j\right)+ i \cdot \lambda_G 2^{n-k+1} l(c)}_{\text{position of spine $t$ after moving}},\]

and $\omega^{k,j+1}_q=0$ otherwise.  

From this definition, we have that
\begin{itemize}
    \item $\textbf{z}^{k,j+1}_q = \textbf{z}^{k,j}_q$ whenever $\omega^{k,j+1}_q = \omega^{k,j+1}_{q-1}$ ;
    \item $\textbf{z}^{k,j+1}_q = \textbf{z}^{k,j}_q + 2$ whenever $\omega^{k,j+1}_q = 0$ and $\omega^{k,j+1}_{q-1} = 1$ ;
    \item $\textbf{z}^{k,j+1}_q = \textbf{z}^{k,j}_q - 2$ whenever $\omega^{k,j+1}_q = 1$ and $\omega^{k,j+1}_{q-1} = 0$.
\end{itemize}
Necessarily these two last types of indexes alternate. This implies that it is 
possible to apply Lemma~\ref{lem:changing-sequence} in which $\textbf{z},\textbf{z}'$ are replaced respectively by $\textbf{z}^{k,j+1}$ and $\textbf{z}^{k,j}$.

By Lemma~\ref{lem:changing-sequence}, we have:
\[d^{\mathcal{R}}_G\left(\Gamma^{\textbf{r}}_{\textbf{z}^{k,j}}(\gamma_{k,j+1}), \Gamma^{\textbf{r}}_{\textbf{z}^{k, j+1}}(\gamma_{k,{j+1}})\right) = 1.
\]
We have $\gamma_{k,0} = \gamma_{k-1}$, $\textbf{z}^{k,0} = \textbf{z}^{k-1}$ and $\gamma_{k,2\lambda_G} = \gamma'_{k}$ where:
\[
\gamma'_{k} := \left(t^{(l(c)/2-2)\lambda_G} \odot c^{\left(2^{n-k}-1\right)\lambda_G} \odot t^{2\lambda_G} \odot c^{2^{n-k}\lambda_G} \right)^{2^{k-1}}.
\]
Thus by triangular inequality,
\begin{equation}\label{eq:1}
d^{\mathcal{R}}_G \left( \Gamma^{\textbf{r}}_{\textbf{z}^{k-1}}(\gamma_{k-1}), \Gamma^{\textbf{r}}_{\textbf{z}^{k, 2\lambda_G}}(\gamma'_{k})\right)\le 4\lambda_G.
\end{equation}

In $\Gamma^{\textbf{r}}_{\textbf{z}^{k, 2\lambda_G}}(\gamma'_{k})$, it may happen that the cycles $r^{(j)}$ appear inside an occurrence of $t^{2\lambda_G} \odot c$.
We change the sequence $\textbf{z}^{k, 2\lambda_G}$ by moving its elements to the left when it happens, in a similar way as we did with $\omega_q^{k,j+1}$ above, so that we can do further modifications to the block in question.

Formally, we define a sequence $\textbf{x}^{0}, \dots , \textbf{x}^{m}$ as follows. Put $\textbf{x}^0 = \textbf{z}^{k, 2\lambda_G}$. For all $p$, consider the following cases in order to define $\textbf{x}^p$:
\begin{itemize}
    \item for some indices $q$, $\textbf{x}^p_0 + \dots + \textbf{x}^p_q$ falls in $\Gamma^{\textbf{r}}_{\textbf{x}^{p}}(\gamma'_{k})$ inside an occurrence of  $t^{2\lambda_G} \odot c$. Then we set:
    \begin{itemize}
        \item $\textbf{x}^{p+1}_q = \textbf{x}^{p}_q - 2$ if $q$ is such an index but $q-1$ is not (or $q=0$);
        \item $\textbf{x}^{p+1}_q = \textbf{x}^{p}_q+2$ if $q$ is not such an index but $q-1$ is;
        \item $\textbf{x}^{p+1}_q = \textbf{x}^{p}_q$ in all other cases.
    \end{itemize}
    \item If there are no such indices, the sequence ends at $\textbf{x}^{p}$ and we set $m=p$.
\end{itemize}

We can check that $m \le 3\lambda_G$.

In the end, we have that every occurrence of the word $t^{2\lambda_G} \odot c$ in $\gamma'_{k+1}$ is in some sub-word $\left(\gamma'_{k+1}\right)_{\textbf{z}^m_0 + \ldots \textbf{z}^m_{j} + \llbracket 0, \textbf{z}^m_{j+1} \rrbracket}$ for some $j < l$. By another application of Lemma~\ref{lem:changing-sequence}, we have:
\begin{equation}\label{eq:2}
	d^{\mathcal{R}}_G \left(\Gamma^{\textbf{r}}_{\textbf{z}^{k,2\lambda_G}} (\gamma'_{k}), \Gamma^{\textbf{r}}_{\textbf{x}^m} (\gamma'_{k})\right) \le {3\lambda_G}.
\end{equation}
\item \textbf{Dichotomic decomposition:}
Similarly as in Corollary~\ref{cor:reduction2m}, we decompose in each block the copies of $c$ in $c^{\lambda_G}$ using the spines $t^{2\lambda_G}$. For all $m$, we set: 
\[\gamma'_{k,m} := \left(t^{(l(c)/2-2)\lambda_G} \odot c^{\left(2^{n-k}-1\right)\lambda_G} \odot t^{2\lambda_G + ml(c)/2} \odot c^{\left(2^{n-k}\right)\lambda_G-m} \right)^{2^{k-1}}.
\]
We have $\gamma'_{k,0} = \gamma'_k$ and, by Lemma~\ref{lemma.cycle.reduction} and Lemma~\ref{lemma.parallelisation},
\[
d^{\mathcal{R}}_G\left(\Gamma^{\textbf{r}}_{\textbf{y}^0} (\gamma'_{k,0}), \Gamma^{\textbf{r}}_{\textbf{y}^0} (\gamma'_{k,1}) \right) \le 2\lambda_G,
\] 
where we set $\textbf{y}^0 := \textbf{x}^m$.

Some cycles in $\textbf{r}$ may appear inside an occurrence of $t^{2\lambda_G} \odot c$ in $\Gamma^{\textbf{r}}_{\textbf{x}^0} (\gamma'_{k,1})$. This is the reason why, instead of just applying 
 Corollary~\ref{cor:reduction2m}, we define a sequence 
 $\gamma'_{k,m}$, $0 \le m \le \lambda_G$ step by step: we need to correct 
 each time these `misplacements'. Applying the same argument as in the previous step, there is some $\textbf{y}^1$ such that this is no longer the case and 
\[
	d^{\mathcal{R}}_G \left(\Gamma^{\textbf{r}}_{\textbf{y}^0} (\gamma'_{k,1}), \Gamma^{\textbf{r}}_{\textbf{y}^1} (\gamma'_{k,1}) \right) \le { 3\lambda_G}.
\]
In a similar manner, we build a sequence $\left(\textbf{y}^m\right)_{1 \le m \le \lambda_G}$ such that for all $m$, 
\[
d^{\mathcal{R}}_G\left(\Gamma^{\textbf{r}}_{\textbf{y}^{m-1}} (\gamma'_{k,m-1}), \Gamma^{\textbf{r}}_{\textbf{y}^m} (\gamma'_{k,m}) \right) \le 2\lambda_G + { 3 \lambda_G},
\]
Summing up,
\begin{equation}\label{eq:3}
d^{\mathcal{R}}_G\left(\Gamma^{\textbf{r}}_{\textbf{y}^{0}} (\gamma'_{k,0}), \Gamma^{\textbf{r}}_{\textbf{y}^{\lambda_G}} (\gamma'_{k,\lambda_G}) \right) \le { 5}\lambda_G^2.
\end{equation}
Now notice that $\gamma'_{k,\lambda_G}$ is the same word as $\gamma_k$, except for some blocks of $2\lambda_G$ `misplaced' spines. We can move them back in time ${3\lambda_G}$, exactly as in Step 2 of the current proof: thus there is some $\textbf{z}^k$ such that 
\begin{equation}\label{eq:4}
d^{\mathcal{R}}_G\left(\Gamma^{\textbf{r}}_{\textbf{y}^{\lambda_G}} (\gamma'_{k,\lambda_G}), \Gamma^{\textbf{r}}_{\textbf{z}^k} (\gamma_k) \right) \le { 3\lambda_G}.
\end{equation}

 Summing Equations~\ref{eq:1}, \ref{eq:2}, \ref{eq:3} and \ref{eq:4}, and by triangular inequality:
\[
d^{\mathcal{R}}_G(\Gamma^{\textbf{r}}_{\textbf{z}^{k-1}}(\gamma_{k-1}),\Gamma^{\textbf{r}}_{\textbf{z}^k}(\gamma_{k})) \le 4\lambda_G + { 3\lambda_G} + { 5\lambda_G ^2} + { 3\lambda_G} \le { 15\lambda_G^2}.
\]

\item \textbf{Conclusion:} As a consequence, again by triangular inequality:
\[
d_G^\mathcal{R}(\Gamma^{\textbf{r}}_{\textbf{z}}(t^{\lambda_G l(c)/2}\odot c^{(2^n-1)\lambda_G}),\ \Gamma^{\textbf{r}}_{\textbf{z}^{n}}(t^{2^{n-1} \lambda_G l(c)})) \le { 15 \lambda_G^2 n}
\]

The previous equation implies that, to go from the sequence $\textbf{z}$ to $\textbf{z}^n$, we had to apply Lemma~\ref{lem:changing-sequence} at most $15 \lambda_G^2 n$ times. Each time, we apply the same lemma to come back to the initial sequence, which leads to the following estimate:
\[
d_G^\mathcal{R}(\Gamma^{\textbf{r}}_{\textbf{z}^{n}}(t^{2^{n-1}\lambda_G l(c)})), \Gamma^{\textbf{r}}_{\textbf{z}}(t^{2^{n-1} \lambda_G l(c)}))) \le { 15 \lambda_G ^2 n}.
\]

By triangular inequality, we obtain: 
\[
d_G^\mathcal{R}(\Gamma^{\textbf{r}}_{\textbf{z}}(t^{\lambda_G l(c)/2}\odot c^{(2^n-1)\lambda_G}),\ \Gamma^{\textbf{r}}_{\textbf{z}}(t^{2^{n-1} \lambda_G l(c)})) \le { 30 \lambda_G ^2} n,
\]
which concludes the proof.\qedhere
	\end{enumerate}
\end{proof}

\begin{lemma}\label{proposition.log.upper.bound.general}
If $G$ is square decomposable then there is a constant $\alpha_G>0$ such that, for every simple cycle $c$, spine $t$ on $c_0$, integer $n$, and sequence of cycles $\textbf{r}=(r^{(j)})_{0 \le j \le l}$ all beginning and ending at some vertex $a\in V_G$, we have:
\[d_G (\Gamma^{\textbf{r}}_\textbf{z}(t^{\lambda_G l(c)/2} \odot c^{n}), \Gamma^{\textbf{r}}_\textbf{z}(t^{(\lambda_G +n)l(c)/2}))\le \alpha_G \cdot \log_2(n),\]
where $\textbf{z} := (\lambda_G l(c), k_1 l(c), \ldots , k_l l(c))$ for any sequence of positive integers $(k_j)_{1 \le j \le l}$ such that $k_1 + \hdots + k_l = n$.
\end{lemma}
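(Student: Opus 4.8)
The statement of Lemma~\ref{proposition.log.upper.bound.general} asks for a logarithmic bound on $d_G$ (not $d_G^{\mathcal R}$), for an arbitrary integer $n$ (not just $n=(2^N-1)\lambda_G$), and starting from $t^{\lambda_G l(c)/2}\odot c^n$ rather than $t^{\lambda_G l(c)/2}\odot c^{(2^N-1)\lambda_G}$. The plan is to reduce it to Lemma~\ref{lem:lognm} in three moves: first absorb the $\lambda_G$-factor discrepancy between $n$ and a power of two by padding, then convert the $d_G^{\mathcal R}$-bound of Lemma~\ref{lem:lognm} into a $d_G$-bound via Lemma~\ref{lem:comp-distances}, and finally handle the general $n$ by writing it in a mixed-radix-like decomposition and concatenating. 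Throughout, $c$ is a simple cycle, hence decomposable into squares with $l(c)\le 2\lambda_G$ by Lemma~\ref{lemma.cycle.reduction}, so everything in Section~\ref{section.spine.parallelisation} applies.

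First I would treat the case where $n=(2^N-1)\lambda_G$ for some $N$. Here Lemma~\ref{lem:lognm} gives directly
\[
d_G^{\mathcal R}\!\left(\Gamma^{\textbf r}_{\textbf z}(t^{\lambda_G l(c)/2}\odot c^{(2^N-1)\lambda_G}),\ \Gamma^{\textbf r}_{\textbf z}(t^{2^{N-1}\lambda_G l(c)})\right)\le 30\lambda_G^2 N,
\]
and since $t^{2^{N-1}\lambda_G l(c)}=t^{(\lambda_G+(2^N-1)\lambda_G)l(c)/2}$ when $l(c)\ge 2$ (a short arithmetic check: $(2^N-1)\lambda_G\cdot l(c)/2+\lambda_G l(c)/2=2^{N-1}\lambda_G l(c)$), the target word matches. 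Applying Lemma~\ref{lem:comp-distances} then yields a $d_G$-bound of $60\lambda_G^2 N = O(\log_2 n)$, with a constant depending only on $G$. The case $l(c)=2$ is trivial since then $c^n$ is already a power of a spine and $\Gamma^{\textbf r}_{\textbf z}$ of it equals the target up to relabeling.

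For a general $n$, the plan is a greedy/dyadic decomposition: write $c^n = c^{n_1}\odot c^{n_2}\odot\cdots\odot c^{n_s}$ where each $n_i$ is of the form $(2^{N_i}-1)\lambda_G$, plus a bounded remainder of size $<\lambda_G\cdot(\text{something }O(1))$, with $s=O(\log n)$ and each $N_i=O(\log n)$. Each block $c^{n_i}$ sits between two appropriate powers of $t$, so it can be viewed as a $\Gamma$-insertion into a larger word: I would absorb the other blocks (and the elements of $\textbf r$) into the role of the inserted sequence "$\textbf r$" in a fresh application of Lemma~\ref{lem:lognm}. Doing this block by block, transforming $c^{n_i}$ into $t^{n_i l(c)/2}$ while leaving the rest untouched, costs $O(\log n)$ in $d_G^{\mathcal R}$ per block by Lemma~\ref{lem:lognm}; but naively summing over $s=O(\log n)$ blocks gives $O(\log^2 n)$, which is too weak. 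The fix is to do it dyadically as well: pair up adjacent blocks of equal size and merge them, halving the number of blocks at each of $O(\log n)$ stages, where stage $k$ performs transformations on blocks of size $O(2^k)$ in parallel (via Lemma~\ref{lemma.parallelisation} / Remark~\ref{lemma.parallelisation.second}) at cost $O(k)$ in $d_G^{\mathcal R}$ — exactly mirroring the internal structure of the proof of Lemma~\ref{lem:lognm}. Summing $\sum_k O(k)$ over $k\le\log_2 n$ stages gives $O(\log^2 n)$ again unless the parallelization is genuinely simultaneous across all blocks at a given stage, in which case each stage costs $O(\log n)$ total and we still get $O(\log^2 n)$; the only way to reach $O(\log n)$ is to observe that the entire construction is already a single application of the dichotomic process of Lemma~\ref{lem:lognm} with $t^{\lambda_G l(c)/2}$-padding inserted at $O(\log n)$ positions, and invoke that lemma once with $N=\lceil\log_2(n/\lambda_G+1)\rceil$ after padding $c^n$ up to $c^{(2^N-1)\lambda_G}$ by inserting $O(\log n)$ extra copies of $t^{l(c)}$ — wait, that changes the length.

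The cleanest route, and the one I would commit to, avoids the block decomposition entirely: pad $n$ up to $n'=(2^N-1)\lambda_G$ with $N=\lceil\log_2(n/\lambda_G+1)\rceil$ by replacing $c^n$ with $c^n\odot t^{(n'-n)l(c)/2}$ — i.e. pre-insert the "missing" spines rather than missing copies of $c$ — so that $t^{\lambda_G l(c)/2}\odot c^n\odot t^{(n'-n)l(c)/2}$ has the same length as $t^{\lambda_G l(c)/2}\odot c^{n'}$, reorganize via $O(1)$ applications of Lemma~\ref{lem:changing-sequence}-type moves to bring it to the standard form fed to Lemma~\ref{lem:lognm} up to where $\textbf r$ is inserted, apply Lemma~\ref{lem:lognm} once at cost $30\lambda_G^2 N=O(\log n)$ in $d_G^{\mathcal R}$, then pass to $d_G$ via Lemma~\ref{lem:comp-distances}, losing only a factor $2$. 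The main obstacle — and the step I expect to need the most care — is precisely this bookkeeping of where the cycles $r^{(j)}$ of $\textbf r$ land relative to the padding spines and the block boundaries, so that the sequence $\textbf z$ in the hypothesis (with $k_1+\cdots+k_l=n$) can be matched to the sequence required by Lemma~\ref{lem:lognm} (with $k_1+\cdots+k_l=(2^N-1)\lambda_G$) through a bounded number of the $\textbf z\leftrightarrow\textbf z'$ swaps of Lemma~\ref{lem:changing-sequence}; each such swap costs $1$ in $d_G^{\mathcal R}$ and there are $O(\log n)$ of them, so the total remains $O(\log n)$, and $\alpha_G$ can be taken to be, say, $100\lambda_G^2$.
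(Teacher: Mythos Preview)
Your final route — pad, apply Lemma~\ref{lem:lognm} once, convert via Lemma~\ref{lem:comp-distances} — is the right one, but the execution contains a real gap. You propose to pad $c^n$ with spines on the right to reach the length of $c^{n'}$ with $n'=(2^N-1)\lambda_G$, and then ``reorganize via $O(1)$ applications of Lemma~\ref{lem:changing-sequence}-type moves to bring it to the standard form fed to Lemma~\ref{lem:lognm}''. But bringing $t^{\lambda_G l(c)/2}\odot c^n\odot t^{(n'-n)l(c)/2}$ to the standard form $t^{\lambda_G l(c)/2}\odot c^{n'}$ is not an $O(1)$ or even $O(\log n)$ operation in any obvious way: it asks you to turn $t^{(n'-n)l(c)/2}$ into $c^{n'-n}$, which is essentially the inverse of the statement you are trying to prove, applied to a block of length comparable to $n$. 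The subsequent $\textbf z$-bookkeeping you sketch does not address this.

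The missing idea is much simpler and bypasses all of it: $d_G$ does not increase when you pass to prefixes of equal length (a walk in $\Delta_G^{N}$ restricts coordinatewise to a walk in $\Delta_G^{M}$ for any $M\le N$). So pad with copies of $c$, not spines. Choose $N$ with $n':=(2^N-1)\lambda_G\ge n$ and $N=O(\log n)$, extend $\textbf z$ by one coordinate $(n'-n)l(c)$ and $\textbf r$ by a trivial cycle so that Lemma~\ref{lem:lognm} applies verbatim to $t^{\lambda_G l(c)/2}\odot c^{n'}$, and convert the resulting $d_G^{\mathcal R}$-bound to $d_G$ via Lemma~\ref{lem:comp-distances}. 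Now $\Gamma^{\textbf r}_{\textbf z}(t^{\lambda_G l(c)/2}\odot c^n)$ and $\Gamma^{\textbf r}_{\textbf z}(t^{(\lambda_G+n)l(c)/2})$ are prefixes of the same length of the two longer $\Gamma$-words, so the same bound $60\lambda_G^2 N=O(\log n)$ transfers to them directly. Small $n$ are absorbed into $\alpha_G$. This is exactly the paper's argument, and it eliminates the block decomposition, the spine padding, and all the $\textbf z\leftrightarrow\textbf z'$ swaps you were anticipating.
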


\begin{proof}
Let us fix a cycle $c$, and some integer $n\ge \lambda_G$. Let us denote 
by $k \ge 1$ and $s<\lambda_G$ integers such that $n=k\lambda_G-s$.

By Lemma~\ref{lem:lognm}, 
\[
d_G^\mathcal{R}\left(\Gamma^{\textbf{r}}_\textbf{z}\left(t^{\lambda_G l(c)/2} \odot c^{\lambda_G (2^{\lceil \log_2(k) \rceil+1}-1)}\right), \Gamma^{\textbf{r}}_\textbf{z}\left(t^{\lambda_G l(c)2^{\lceil \log_2(k) \rceil}}\right)\right) \le { 30} \lambda_G ^2 (\log_2(k) + 2),
\]
and by Lemma~\ref{lem:comp-distances},
\[
d_G \left(\Gamma^{\textbf{r}}_\textbf{z}\left(t^{\lambda_G l(c)/2} \odot c^{\lambda_G (2^{\lceil \log_2(k) \rceil+1}-1)}\right), \Gamma^{\textbf{r}}_\textbf{z}\left(t^{\lambda_G l(c)2^{\lceil \log_2(k) \rceil}}\right)\right) \le { 60} \lambda_G ^2 (\log_2(k) + 2).
\]
It is straightforward that $d_G$ is conserved when taking prefixes of the same length. Since $n\ge \lambda_G$, we have $\lambda_G (2^{\lceil \log_2(k) \rceil+1}-1) \geq 2\lambda_G k - \lambda_G \geq 2(n+s) - \lambda_G \geq n$. It follows that
$\Gamma^{\textbf{r}}_\textbf{z}(t^{\lambda_G l(c)/2} \odot c^{n})$
is a prefix of
$\Gamma^{\textbf{r}}_\textbf{z}(t^{\lambda_G l(c)/2} \odot c^{\lambda_G (2^{\lceil \log_2(k) \rceil+1}-1)})$. Therefore:
\[d_G(\Gamma^{\textbf{r}}_\textbf{z}(t^{\lambda_G l(c)/2} \odot c^{n}), \Gamma^{\textbf{r}}_\textbf{z}(t^{(\lambda_G +n)l(c)/2}))\le {60} \lambda_G ^2 \cdot (\log_2(k)+2) \le 240 \lambda_G ^2 \cdot \log_2 (n).\]
This inequality is still satisfied when replacing $240 \lambda_G ^2$ by any value $\alpha_G \geq 240 \lambda_G ^2$. We can thus take $\alpha_G$ sufficiently large that the inequality holds for all simple cycles $c$ and all $n < \lambda_G$ as well.
\end{proof}

\subsection{The gap function is logarithmic\label{section.main.finite.cover}}

In this section, we prove Theorem~\ref{theorem.square.dismantable.to.log}. By representing a cycle as a cactus, we can apply the transformation defined in the previous sections in parallel on all leaves. Doing this in a repeated way, every cycle can be transformed into a power of a spine in a time which depends logarithmically on the length of the cycle. 

Before this, we need a last technical tool. Let us recall that we have, for any cycle $c$, any spine $t$ on $c_0$, and any $m \ge 0$: 
\[
d^{\mathcal{R}}_G(t^m \odot c,c \odot t^m) \le m.
\]
The next lemma provides another useful bound that holds in a more general context.

\begin{lemma}\label{lemma:movingbacktracks}
Let $u$ be a walk and let $t, t'$ be spines on $u_0$ and $u_{l(u)}$, respectively. Then
\[
d^{\mathcal{R}}_G(t^m\odot u, u\odot t'^m) \le l(u).
\]
\end{lemma}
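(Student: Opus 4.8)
The goal is to bound $d^{\mathcal{R}}_G(t^m\odot u, u\odot t'^m)$ by $l(u)$, i.e.\ to move a block of $m$ spines from the left end of a walk $u$ to the right end, where the spine on each side is the appropriate one ($t$ on $u_0$, $t'$ on $u_{l(u)}$). The natural idea is to push the block of spines along $u$ one edge at a time: at step $i$ we should have a cycle consisting of $u_0\ldots u_i$, then $m$ spines sitting at vertex $u_i$, then $u_i\ldots u_{l(u)}$, then the $t'$-block if it has already partially formed. So I would first set up, for $0\le i\le l(u)$, the intermediate cycle
\[
w^{(i)} := u_0\ldots u_i \odot \tau_i^m \odot u_i u_{i+1}\ldots u_{l(u)} \odot t'^{\,0},
\]
where $\tau_i$ denotes \emph{some} spine on $u_i$ — the key point is that such a spine always exists in a connected graph since $u_i$ has at least one neighbour (namely $u_{i-1}$ or $u_{i+1}$, or itself if there is a self-loop). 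Then $w^{(0)} = t^m\odot u$ (choosing $\tau_0 = t$) and $w^{(l(u))} = u\odot t'^m$ (choosing $\tau_{l(u)} = t'$), and it remains to check that $d^{\mathcal{R}}_G(w^{(i)},w^{(i+1)}) \le 1$ for each $i$.

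To prove the single-step bound, I would exhibit a left or right shift of one of the two cycles that is a neighbour of the other in $\Delta_G^{l(w^{(i)})}$, exactly as in the proofs of Lemma~\ref{lem:changing-sequence} and Lemma~\ref{lemma.parallelisation}. Going from $w^{(i)}$ to $w^{(i+1)}$, the difference is purely local: the chunk $u_i\,\tau_i\,\tau_i\,\ldots\,\tau_i\,u_i\,u_{i+1}$ (vertex $u_i$, then $m$ excursions to a neighbour of $u_i$, then step to $u_{i+1}$) is replaced by $u_i\,u_{i+1}\,\tau_{i+1}\,\ldots\,\tau_{i+1}\,u_{i+1}$. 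One checks that a one-letter shift of $w^{(i)}$ (reading $w^{(i)}$ starting one position later) is a valid walk of the same length that is $\Delta_G^{\cdot}$-adjacent to $w^{(i+1)}$: the adjacency condition in $\Delta_G^n$ only requires that corresponding letters be neighbours in $G$, and because $\tau_i$ visits a neighbour of $u_i$ and $\tau_{i+1}$ visits a neighbour of $u_{i+1}$ while $(u_i,u_{i+1})\in E_G$, all the required pairs are edges of $G$. This is the same kind of verification that "the second letter and the penultimate letter of a spine are neighbours of the base vertex" used elsewhere. Summing the $l(u)$ single steps by the triangle inequality for $d^{\mathcal{R}}_G$ gives the claimed bound.

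The main obstacle I anticipate is not conceptual but bookkeeping: making the intermediate cycles $w^{(i)}$ genuinely well-typed as cycles of a fixed length $l(w^{(i)}) = l(u) + 2m$ and writing the shift witnessing $w^{(i)}\mathcal{R}_{\epsilon}w^{(i+1)}$ with the indices lined up correctly, including the degenerate cases where $u$ has length $0$ or $1$, or where $u_i = u_{i+1}$ (so the "step" is along a self-loop) or $\tau_i$ can be taken equal to $\tau_{i+1}$. I would handle these by noting that the argument only uses edges of $G$ and never requires the spines at consecutive vertices to be distinct or to differ from the edges of $u$, so no special-casing is really needed beyond checking that a spine on each $u_i$ exists, which holds by connectedness. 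One should also confirm, as in the recalled fact $d^{\mathcal{R}}_G(t^m\odot c, c\odot t^m)\le m$, that the orientation conventions ($\mathcal{R}_0$ vs.\ $\mathcal{R}_1$, left vs.\ right shift) are consistent; picking the shift direction appropriately at each step makes every $d^{\mathcal{R}}_G(w^{(i)},w^{(i+1)})=1$.
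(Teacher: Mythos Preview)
Your outline is the same as the paper's: define intermediate cycles $w^{(i)}$ with $m$ spines sitting at vertex $u_i$, and show that consecutive ones are $\mathcal{R}$-related. The paper writes this in one line, taking $\tau_i$ to be the specific spine $u_iu_{i+1}u_i$.

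There is, however, a real gap in your justification of the single step. You assert that the adjacency in $\Delta_G$ holds ``because $\tau_i$ visits a neighbour of $u_i$ and $\tau_{i+1}$ visits a neighbour of $u_{i+1}$ while $(u_i,u_{i+1})\in E_G$'', and later that ``no special-casing is really needed beyond checking that a spine on each $u_i$ exists''. This is false: when you line up a shift of $w^{(i)}$ against $w^{(i+1)}$, among the letter-by-letter comparisons you get the pair (neighbour visited by $\tau_i$, neighbour visited by $\tau_{i+1}$), and nothing you wrote forces that pair to be an edge of $G$. Concretely, take $G$ the hexagon $a\!-\!b\!-\!c\!-\!d\!-\!e\!-\!f\!-\!a$, $u=abc$, $m=2$, $\tau_1=bcb$, and try to relate $t^2\odot u=afafabc$ (with $t=afa$) to $w^{(1)}=abcbcbc$: every left or right shift forces you to compare $f$ with $c$, which is not an edge.

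The fix is exactly the paper's: do not take $\tau_i$ arbitrary, but set $\tau_i=u_iu_{i+1}u_i$. Then the ``mystery pair'' above becomes $(u_{i+1},u_{i+2})$, which \emph{is} an edge, and the shifted comparison goes through. Once you commit to this choice, your plan is correct and coincides with the paper's proof; only your claim that the spine may be chosen freely needs to be retracted.
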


\begin{proof}
Let $t^{(i)}$ be the spine $u_iu_{i+1}u_i$ and $u^{(i)} = u_0\dots u_i \odot \left(t^{(i)}\right)^m \odot u_i\dots u_{l(u)}$. It is enough to check that $(t^m \odot u) \mathcal{R}_0 u^{(1)}$, that $u^{(i)} \mathcal{R}_0 u^{(i+1)}$ for all $0<i<l(u)-1$, and that $u^{(l(u)-1)} \mathcal{R}_0 (u \odot t'^m)$.
\end{proof}

\begin{remark}
Lemma~\ref{lemma:movingbacktracks} still holds when, instead of $u\odot t'^m$, spines appear in $u$ in arbitrary positions. This allows us to gather all spines on the left or right side of $u$ within $l(u)$ bound on $d^{\mathcal{R}}_G$.
\end{remark}

\begin{lemma}\label{lemma:extension}
Take $a,b$ in $V_G$, $r$ and $r'$ simple walks respectively from $b$ to $a$ and from $a$ to $b$, and $c, c'$ two cycles of same length from $a$ to $a$. Then:
\[d_G(r\odot c \odot r',r\odot c' \odot r') \le d_G(c,c') + 4 \text{diam}(G).\]
\end{lemma}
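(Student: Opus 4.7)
The plan is to prove the bound by explicitly constructing a two-dimensional locally admissible pattern on the block-like rectangle $\llbracket 0, N \rrbracket \times \llbracket 0, M \rrbracket$, where $N = l(r)+l(c)+l(r')$ and $M = d_G(c,c')+4\,\text{diam}(G)$, whose bottom row is $r \odot c \odot r'$ and whose top row is $r \odot c' \odot r'$. By Lemma~\ref{lem:localglobal}, such a pattern is globally admissible, and then the definition of $d_G$ immediately yields $d_G(r\odot c\odot r',\, r\odot c'\odot r') \le M$.

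Write $m = d_G(c,c')$ and fix a walk $(p^{(0)},\dots,p^{(m)})$ in $\Delta_G^{l(c)}$ from $c$ to $c'$ realising the distance; let $\Pi_0$ be the associated pattern on $\llbracket 0,l(c)\rrbracket \times \llbracket 0,m\rrbracket$ with $i$-th row $p^{(i)}$. I would place $\Pi_0$ inside the target rectangle at columns $\llbracket l(r), l(r)+l(c)\rrbracket$ and rows $\llbracket 2\,\text{diam}(G),\, 2\,\text{diam}(G)+m\rrbracket$. Its leftmost (resp.\ rightmost) column is a walk $\alpha$ (resp.\ $\beta$) of length $m$ from $a$ to $a$ in $G$. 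What remains to fill is (i) the two middle buffers (middle columns, rows $\llbracket 0, 2\,\text{diam}(G)\rrbracket$ and rows $\llbracket 2\,\text{diam}(G)+m, M\rrbracket$), and (ii) the left panel $\llbracket 0, l(r)\rrbracket \times \llbracket 0, M\rrbracket$ together with the symmetric right panel. For (i), Lemma~\ref{lem:mixingbipartite} applied column by column gives in each column $j$ of the middle a walk of length $2\,\text{diam}(G)$ from $c_j$ to $c_j$ (and from $c'_j$ to $c'_j$); horizontal consistency is enforced by a canonical ``parallel'' choice driven by the edges of $c$ (resp.\ $c'$). After this step the column of the target rectangle at position $l(r)$ is fully determined as a walk $\gamma$ from $a$ to $a$ of length $M$, whose central segment is $\alpha$, and similarly at position $l(r)+l(c)$ for a walk $\delta$.

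For (ii), the left panel has top and bottom rows $r$ and rightmost column $\gamma$. I would construct it as a walk $(w^{(h)})_{0\le h\le M}$ in $\Delta_G^{l(r)}$ from $r$ to $r$ with each row satisfying $w^{(h)}_{l(r)}=\gamma_h$: in the bottom $2\,\text{diam}(G)$ rows, transition from $r$ to a walk of length $l(r)$ ending at $\gamma_{2\,\text{diam}(G)} = \alpha_0 = a$; in the middle rows, lift $\alpha$ to $\Delta_G^{l(r)}$ by a step-by-step right-to-left completion using common neighbours of the relevant vertices; and in the top $2\,\text{diam}(G)$ rows, revert back to $r$. The right panel is handled symmetrically from $r'$ and $\delta$. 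The main obstacle is the lifting step, i.e.\ guaranteeing that at each row a valid $w^{(h+1)}$ exists compatible with $w^{(h)}$ and with the prescribed right endpoint $\gamma_{h+1}$; this is where the simplicity of $r$ and $r'$ is used (their vertices behave well under extension), and where the $2\,\text{diam}(G)$-thick buffers provide exactly the slack needed to absorb any discrepancy via Lemma~\ref{lem:mixingbipartite}, yielding the promised additive overhead of $4\,\text{diam}(G)$.
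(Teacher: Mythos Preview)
Your construction has a genuine gap in both steps (i) and (ii). In (i), Lemma~\ref{lem:mixingbipartite} only produces a walk in $G$ from $c_j$ to $c_j$ in each column separately; it says nothing about horizontal compatibility between adjacent columns, and your phrase ``a canonical `parallel' choice driven by the edges of $c$'' does not supply an argument. More seriously, in (ii) you acknowledge that the lifting is ``the main obstacle'' but do not resolve it: given $w^{(h)}$ and a prescribed right endpoint $\gamma_{h+1}$, you propose ``right-to-left completion using common neighbours,'' yet you give no reason why the required common neighbours exist at every position. Even if the lifting step succeeds, after $m$ such steps $w^{(2\operatorname{diam}(G)+m)}$ is an arbitrary walk of length $l(r)$ ending at $a$; returning to $r$ in only $2\operatorname{diam}(G)$ further steps is not justified (the diameter of $\Delta_G^{l(r)}$ is in general much larger than $\operatorname{diam}(G)$), and this is precisely where the hypothesis that $r$ is simple should enter---but you never say how.

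The paper's proof avoids all of this two-dimensional scaffolding. It works directly in $\Delta_G^{N}$: alongside the path $(p^{(i)})_{0\le i\le m}$ from $c$ to $c'$, it carries left-shifted copies $r_l^{(i)}$ of $r$ and right-shifted copies $r_r^{(i)}$ of $r'$, so that $q^{(i)}:=r_l^{(i)}\odot p^{(i)}\odot r_r^{(i)}$ is a walk in $\Delta_G^{N}$ of length $m$ starting at $r\odot c\odot r'$. At the end, $q^{(m)}$ differs from $r\odot c'\odot r'$ only in its first $l(r)$ and last $l(r')$ letters; since $r,r'$ are simple (hence of length at most $\operatorname{diam}(G)$), a sequence of at most $4\operatorname{diam}(G)$ further shifts restores them. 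This is essentially what your ``lifting'' would have to become once made precise: the missing observation is that the canonical choice of common neighbour is simply $w^{(h+1)}_j:=w^{(h)}_{j+1}$ (a left shift), and that the overshoot this creates is bounded by $l(r)$, not by anything depending on $m$. Once you see that, the buffers of step~(i) and the appeal to Lemma~\ref{lem:localglobal} are unnecessary.
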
 

\begin{proof}
Let $\left(p^{(i)}\right)_{0\le i \le m}$ be a sequence of walks on $G$, where $m=d_G(c,c')$, which all have the same 
length $l(c)$ such that for all $i$, $p^{(i+1)}$ and $p^{(i)}$ are neighbors 
and $p^{(0)}=c,p^{(m)}=c'$. Let us define two other sequences $r_l^{(i)}$ and 
$r_r^{(i)}$ such that $r_l^{(0)} = r, r_r^{(0)}=r'$, and for all $i$, 
$r_l^{(i+1)}$ (resp. $r_r^{(i+1)}$) is the left (resp. right) shift of $r_l^{(i)}$ which ends (resp. begins) on $p^{(i+1)}_0$ (resp. $p^{(i+1)}_{l(c)}$). Then the sequence of walks 
\[q^{(i)} := r_l^{(i)} \odot p^{(i)} \odot r_r^{(i)}\]
is well-defined and for all $i$, $q^{(i+1)}$ and $q^{(i)}$ are neighbors.
Since $r$ and $r'$ have length smaller than $\text{diam}(G)$, 
by shifting $\text{diam}(G)$ times to the left, $\text{diam}(G)$ times to the right, $\text{diam}(G)$ times to the right and then $\text{diam}(G)$ times to the left again, we can find a walk on $\Delta_G^{l(c)}$ from $q^{(m)}$ to $r\odot c' \odot r'$. Triangular inequality implies the statement of the lemma.
\end{proof}

\begin{theorem}\label{theorem.square.dismantable.to.log}
If $G$ is square decomposable, $\gamma_G(n) = O(\log(n))$.
\end{theorem}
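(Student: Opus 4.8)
The plan is to reduce the phased block-gluing gap estimate to a statement purely about cycles in $G$, then use the cactus representation and the parallelization machinery of Sections~\ref{section.additional.spines}--\ref{section.spine.parallelisation} to transform any cycle into a power of a spine in logarithmic time. By Lemma~\ref{lemma.even.length} the graph $G$ is bipartite, so by the Corollary following Lemma~\ref{lemma.walks.to.cycles}, the class $\Theta(\gamma_G)$ equals the class of $n\mapsto\max_{l(c)=l(c')=n} d_G(c,c')$, where the max is over cycles. Moreover, by Lemma~\ref{lemma.even}, it suffices to bound $d_G(c,c')$ for cycles of length $n$ that is a multiple of some fixed $k$ (we will take $k = \lambda_G$, or a suitable multiple). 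Since $d_G$ is a metric, it is enough to find a single ``normal form'' cycle $c_\ast$ of each length (a power of a fixed spine, after possibly attaching a simple walk to a fixed basepoint) and show $d_G(c, c_\ast) = O(\log(l(c)))$ for every cycle $c$; then any two cycles of the same length are within $O(\log n)$ of each other by the triangle inequality.

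Next I would fix a cycle $c$ and apply Lemma~\ref{lemma.cactus}: there is a cactus forest $(C_1,\dots,C_k)$ with $\pi(C_1,\dots,C_k)=c$ and depth $n(C_1,\dots,C_k)\le |V_G|$. The key structural idea is to collapse the cactus level by level, starting from the deepest level. At each level $\ell_j$, every cactus in the level has the form $\xi \oplus (\text{already-reduced subcacti})$; since $G$ is square-decomposable, each simple cycle $\xi$ appearing as a label is decomposable into squares, so Lemma~\ref{proposition.log.upper.bound.general} applies: attaching enough spines and using the $\Gamma^{\textbf{r}}_{\textbf{z}}$ notation to leave the inserted subcycles (the sequence $\textbf{r}$) untouched, we can transform $\Gamma^{\textbf{r}}_{\textbf{z}}(t^{\lambda_G l(\xi)/2}\odot \xi^m)$ into $\Gamma^{\textbf{r}}_{\textbf{z}}(t^{(\lambda_G+m)l(\xi)/2})$ in time $O(\log m)$. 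Crucially, all the cacti sitting at the same level can be processed in parallel using Lemma~\ref{lemma.parallelisation} and Remark~\ref{lemma.parallelisation.second}: the total time to collapse one level is $O(\log n)$, not $O((\text{number of cacti at that level})\cdot\log n)$. Because the cactus has at most $|V_G|$ levels, collapsing all of them takes total time $O(|V_G|\cdot\log n) = O(\log n)$, with the constant depending only on $G$ (through $\alpha_G$, $\lambda_G$, $\mathrm{diam}(G)$, $|V_G|$). To manage the bookkeeping when a cactus label $\xi$ is attached at an interior vertex rather than at the basepoint, I would use Lemma~\ref{lemma:extension} and Lemma~\ref{lemma:movingbacktracks} to carry spines across short connecting walks at the cost of an additive $O(\mathrm{diam}(G))$ per operation — absorbed into the constant.

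After all levels are collapsed, the whole forest has been transformed into a concatenation of powers of spines based at various vertices of $G$; using Lemma~\ref{lemma:movingbacktracks} and Lemma~\ref{lemma:extension} once more, all these spines can be gathered and rewritten as a single canonical power-of-a-spine cycle $c_\ast$ of length $l(c)$ in additional time $O(\mathrm{diam}(G) \cdot |V_G|) = O(1)$. This gives $d_G(c, c_\ast) = O(\log(l(c)))$, and since $c_\ast$ depends only on $l(c)$ (and the fixed choices), $d_G(c,c') = O(\log n)$ for all cycles $c, c'$ of length $n$. Combined with the reduction in the first paragraph, this yields $\gamma_G(n) = O(\log n)$.

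**The main obstacle** I anticipate is the parallelization bookkeeping: when collapsing a level, the cycles being inserted (the subcacti already reduced to spine-powers at the previous step) must remain fixed while we manipulate the surrounding simple cycle $\xi$, and the insertion positions $\textbf{z}$ shift every time a spine is moved past an insertion point — exactly the phenomenon handled by the $\omega$-correction arguments inside the proof of Lemma~\ref{lem:lognm}. One must check that these position corrections, applied simultaneously across all cacti at a level, still fit the hypotheses of Lemma~\ref{lemma.parallelisation} and Lemma~\ref{lem:changing-sequence} (that the perturbed indices alternate, that the relevant letters are neighbors of the basepoint, etc.), and that the number of correction steps per level stays $O(\log n)$ uniformly. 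A secondary subtlety is ensuring that the ``canonical target'' $c_\ast$ can genuinely be reached from every cactus-collapsed form independently of the combinatorial shape of the cactus — this is where choosing a single fixed spine $t_0$ on a fixed vertex $a_0$, together with fixed simple connecting walks, and invoking Lemma~\ref{lemma:extension} to move between basepoints, keeps the target well-defined.
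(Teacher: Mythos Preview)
Your overall strategy matches the paper's: reduce to cycles via bipartiteness and Lemma~\ref{lemma.walks.to.cycles}, represent each cycle by a bounded-depth cactus forest (Lemma~\ref{lemma.cactus}), and collapse level by level using the parallelization machinery. But your description of the level-collapse step contains a genuine confusion, and it is exactly the place where the $\log n$ lives.

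You write the encoding of a cactus at level $j$ as $\Gamma^{\textbf{r}}_{\textbf{z}}(t^{\lambda_G l(\xi)/2}\odot \xi^m)$ with $\textbf{r}$ the already-reduced children. But a single cactus contributes its label $\xi$ exactly \emph{once}; there is no power $\xi^m$, and Lemma~\ref{proposition.log.upper.bound.general} with $m=1$ gives only an $O(1)$ bound --- which, pushed through your argument, would yield $\gamma_G = O(1)$, contradicting Theorem~\ref{theorem.kenkatabami}. In the paper the exponent $m$ is not the number of children but the number of \emph{sibling leaves at the deepest level sharing the same simple-cycle label}, and $\textbf{r}$ consists of the \emph{intervening walks between those siblings}, not of the children. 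Moreover, Lemma~\ref{lemma.parallelisation} requires every parallel slot to use the same sequence $(\epsilon_k)$ of shift directions and to begin and end at the same vertex $a$; leaves carrying different simple-cycle labels satisfy neither, so they cannot be collapsed in a single parallel pass. The paper's fix is to enumerate the finitely many simple-cycle types $d^{(1)},\dots,d^{(\tau)}$ with $\tau=|\mathcal{C}_G^0|$ and, for each type in turn, collapse all its occurrences among the current leaves simultaneously via Lemma~\ref{proposition.log.upper.bound.general} (this is where $\log n$ appears, since one type may have $\Theta(n)$ occurrences), then use Lemma~\ref{lemma:movingbacktracks} to push the resulting spines one level up. Once you reorganize the parallelization this way, your outline becomes the paper's proof.
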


\begin{figure}[h!]
	\begin{center}
	
	\begin{tikzpicture}
	\node at (0,0) {\includegraphics[width=1\textwidth]{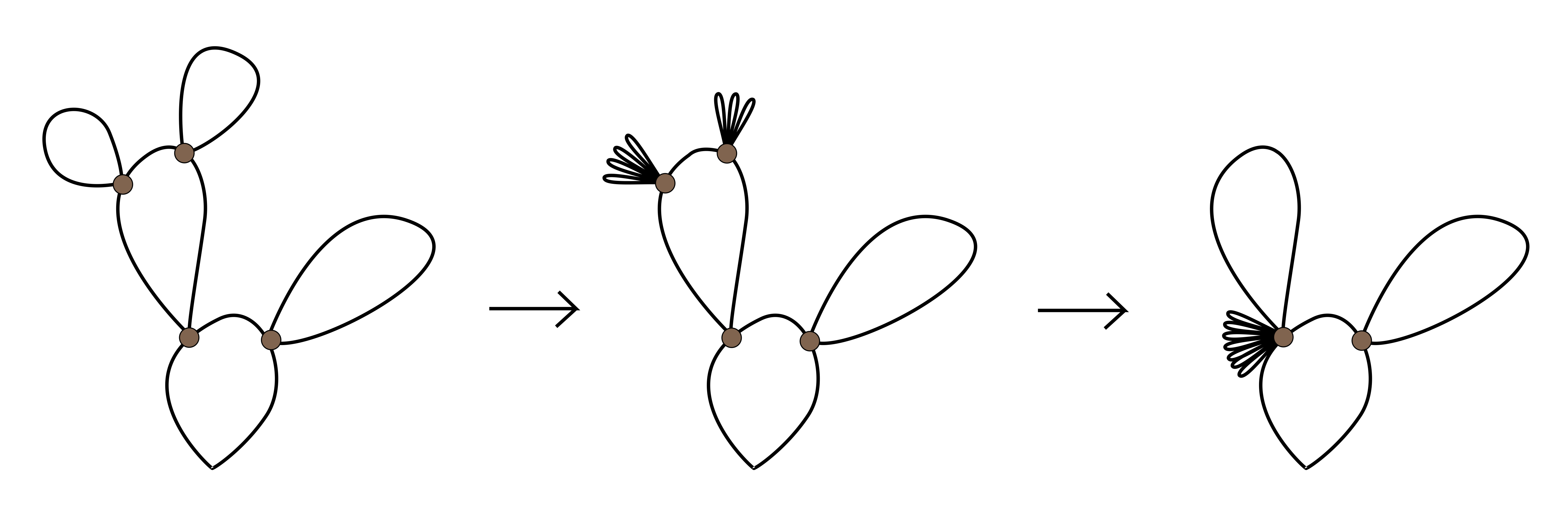}
	};
	
	\node[scale=1] at (-4.4,-2.25) {$n(C)=3$};
	
	\node[scale=1] at (0,-2.25) {$n(C')=3$};
	
	\node[scale=1] at (4.4,-2.25) {$n(C'')=2$};
	\end{tikzpicture}
	\end{center}
	\caption{Illustration of Steps 3 (left) and 4 (right) in the proof of Theorem~\ref{theorem.square.dismantable.to.log}.}\label{cactus-l}
\end{figure}

\begin{proof}
\begin{enumerate}
\item \textbf{Roadmap of the proof:}
Consider a cycle $c$ of length $n$ on $G$. Our goal is to prove that $c$ can be compressed into spines so that:
\[
d_G(c, t^{l(c)/2}) \le 2|V_G|(|\mathcal{C}_G^0|\alpha_G \log(n) + |\mathcal{C}_G^0| \text{diam}(G) + |V_G|)
\]
This ends the proof because, together with Lemma~\ref{lemma.walks.to.cycles} and Lemma~\ref{lemma.even}, this implies that $\gamma_G(n) = O(\log(n))$.

\item \textbf{Using the cactus representation to write $c$ relatively to its leaves:}
From Lemma~\ref{lemma.cactus} we know that there exists a cactus forest $(C_1,\ldots,C_l)$ such that $\pi(C_1,\ldots,C_l)=c$ with depth $k\le|V_G|$. Let us denote by $w^{(1)} , \ldots , w^{(m)}$ an enumeration of the highest level $\ell_k(C_1,\ldots,C_l)$ of this cactus forest such that there exist walks $\gamma^{(1)}, \dots , \gamma^{(m+1)}$ such that: 
\begin{equation}c=\gamma^{(1)} \odot w^{(1)} \odot \gamma^{(2)} \odot \ldots \odot \gamma^{(m)} \odot w^{(m)} \odot \gamma^{(m+1)}.\label{eq:leavesinc}
\end{equation}
Let us recall that each of the cycles $w^{(i)}$ is simple.

\item \textbf{Compressing the leaves into spines:} Let us consider $d^{(1)}, \dots , d^{(\tau)}$ an enumeration of $\mathcal{C}_G^0$. Let 
us construct a sequence of cycles $u^{(0)}, \dots. , u^{(\tau)}$ as follows. 
The aim is to recursively replace all $\xi(w^{(j)})$ in \eqref{eq:leavesinc} representing $d^{(i)}$ by spines.
First, $u^{(0)} = c$ and for all $p < \tau$, assuming 
that $u^{(0)}, \dots , u^{(p)}$ have been constructed, if there exist a sequence $\textbf{r}$, some $q \ge 1$, and some simple walks $v,w$ such that 
$u^{(p)} = v \odot \Gamma^{\textbf{r}}_{\textbf{z}} (d) \odot w$, where $d = \left(d^{(p+1)}\right)^q$ and $\textbf{z}$ consists in multiples of $l\left(d^{(p)}\right)$, 
then we set 
\[u^{(p+1)} := v \odot \Gamma^{\textbf{r}}_{\textbf{z}} ( t^{ql\left(d^{(p+1)}\right)/2}) \odot w,\]
Otherwise, $u^{(p+1)} :=u^{(p)}$.
Note that such representation exists if and only if $d^{(p+1)}$ was among $\xi(w^{(j)})$ in \eqref{eq:leavesinc}.

By Lemma~\ref{proposition.log.upper.bound.general} and Lemma~\ref{lemma:extension}, for all $p < \tau$,
\[d_G(u^{(p+1)}, u^{(p)}) \le \alpha_G \log(n) + \text{diam}(G).\]

As a result of this recursive elimination, $u:= u^{(\tau)}$ can be written as follows:
\[u := \gamma^{(1)} \odot \left(t^{(1)}\right)^{a_1} \odot \gamma^{(2)} \odot \left(t^{(2)}\right)^{a_2} \odot \cdots \odot \gamma^{(m_c)} \odot \left(t^{(m_c)}\right)^{a_{m_c}} \odot \gamma^{(m_c+1)},
\]
where each $t^{(i)}$ is a spine. 

Using Lemma~\ref{lem:comp-distances}, we have:
\[d_G(c,u) \le \tau \alpha_G \log(n) \le |\mathcal{C}_G^0| \left(\alpha_G \log(n)+ \text{diam}(G)\right).\]

Another way to describe $u$ is to say that we have $u = \pi(C'_1,\ldots,C'_l)$ where $(C'_1,\ldots,C'_l)$ is the cactus forest obtained from $(C_1,\ldots,C_l)$ by replacing each leaf $w^{(i)}$ in its $k$-th level by $a_i$ cacti $w^{(i,j)}$, $1 \le j \le a_i$ such that for all $i,j$, $\chi (w^{(i,j)}) = \chi(w^{(i)})$ and $\xi(w^{(i,j)}) = t^{(i)}$.
Figure~\ref{cactus-l} (left) illustrates the transformation of $(C_1,\ldots,C_l)$ into $(C'_1,\ldots,C'_l)$.

\item\textbf{Gathering the spines:} 
Now we bring every spine at depth $k$ to depth $k-1$. We consider two cases: 
\begin{enumerate}[(i)]
    \item \textbf{When $\boldsymbol{k>2}$.} For all $q \le l$, consider the cactus $C''_q$ obtained from $C'_q$ as follows. For every cactus $C^\ast$ in $\ell_{k-2}(C'_q)$ and every $m$, take all cacti $w^{(i,j)}$ that appears in $s(s(C')_m)$, remove them from $s(s(C')_m)$, and add them to $s(C')$ just before $s(C')_m$, keeping their relative order. Furthermore, fix $\chi(w^{(i,j)}) = \chi(s(C')_m)$.
    Let us denote $c' := \pi(C''_1,\ldots,C''_l)$.
    \item \textbf{When $\boldsymbol{k=2}$.} In this case,
    we consider $c' := \pi(C'')$, 
    where $C''$ is a cactus forest obtained from $(C'_1,\ldots,C'_l)$
    in the following way. For all $q \le l$, remove the cacti $w^{(i,j)}$ which appear in $s(C'_q)$ and insert them in the cactus forest in between $C'_q$ and $C'_{q+1}$.
\end{enumerate}

Using Lemma~\ref{lemma:movingbacktracks} on every such cactus $C^\ast$ and Remark~\ref{lemma.parallelisation.second}, we get that:
\[
d^\mathcal{R}_G (u, c') \le \max_{d \in \mathcal{C}_G^0} l(d) \le |V_G|\qquad\text{and thus}\qquad d_G (u, c') \le 2|V_G|.
\]
Since every leaf was moved 
one level down we have $n(C'') = n(C') - 1 = n(C_1, \ldots , C_l) - 1$.

Figure~\ref{cactus-l} (right) illustrates the transformation of $(C'_1,\ldots,C'_l)$ into $(C''_1,\ldots,C''_l)$.

\item \textbf{Iterating the process:} In steps 3 and 4, we found a cycle $c'$ represented by a cactus forest of depth $n(C_1,\ldots,C_l)-1$ such that $l(c)=l(c')$ and 
\[
d_G(c,c') \le |\mathcal{C}_G^0| \left(\alpha_G \log(n)+ \text{diam}(G) \right) + 2|V_G|.
\] 
By repeating this argument, and since $n(C_1,\ldots,C_l) \le |V_G|$, we find a cycle $f$ which is represented by a cactus forest of depth $1$ such that
\[
d_G(c,f) \le 2(|V_G| - 1) (|\mathcal{C}_G^0| \alpha_G \log(n) + |\mathcal{C}_G^0| \text{diam}(G) + |V_G|).
\]
By applying again Lemma~\ref{proposition.log.upper.bound.general} as in Step $3$ of the present proof, we have:
\[
d_G(f,t^{l(c)/2}) \le |\mathcal{C}_G^0|\alpha_G  \log(n),
\]
and thus, by triangular inequality:
\[
d_G(c, t^{l(c)/2}) \le |V_G|(|\mathcal{C}_G^0|\alpha_G \log(n) + |\mathcal{C}_G^0| \text{diam}(G) + |V_G|).
\]
This implies that the claim from the first step holds, and thus the theorem is proved.\qedhere
\end{enumerate}
\end{proof}

\section{A $\Theta(\log(n))$-phased block gluing Hom shift\label{section.log.existence}}

This section disproves R.~Pavlov and M.~Schraudner's conjecture~\cite{CM18} that this class is empty.

\begin{figure}[ht]
\begin{center}
\begin{tikzpicture}[scale=0.4]
\draw (-2,0) -- (0,0) -- (2,0);
\draw (-2,0) -- (-3,1.72) -- (-1,1.72);
\draw (2,0) -- (3,1.72) -- (1,1.72);
\draw (-1,1.72) -- (0,0) -- (1,1.72);
\draw (-1,-1.72) -- (0,0) -- (1,-1.72);
\draw (-1,-1.72) -- (0,-3.44) -- (1,-1.72);
\draw (-1,1.72) -- (0,3.44) -- (1,1.72);
\draw (-3,1.72) -- (0,5.88) -- (0,3.44);
\node at (0,6.7) {$\epsilon_1$};
\draw[fill=gray!90] (2,0) circle (3pt);
\node at (1.5,0.5) {$\mu_6$};
\draw[fill=gray!90] (-2,0) circle (3pt);
\node at (-1.5,0.5) {$\mu_3$};
\draw (0,5.88) -- (3,1.72);
\draw (-2,0) -- (-3,-1.72) -- (-1,-1.72);
\draw (2,0) -- (3,-1.72) -- (1,-1.72);
\draw[fill=gray!90] (1,-1.72) circle (3pt);
\node at (1.75,-2.5) {$\mu_5$};
\draw[fill=gray!90] (-1,-1.72) circle (3pt);
\node at (-1.75,-2.5) {$\mu_4$};
\draw[fill=gray!90] (3,-1.72) circle (3pt);
\node at (3.5,-1) {$\delta_3$};
\draw[fill=gray!90] (-3,-1.72) circle (3pt);
\node at (-3.5,-1) {$\delta_2$};

\draw[fill=gray!90] (1,1.72) circle (3pt);
\node at (1.5,2.25) {$\mu_1$};
\draw[fill=gray!90] (-1,1.72) circle (3pt);
\node at (-1.5,2.25) {$\mu_2$};
\draw[fill=gray!90] (3,1.72) circle (3pt);
\node at (3.65,2) {$\gamma_3$};
\draw[fill=gray!90] (-3,1.72) circle (3pt);
\node at (-3.65,2) {$\gamma_1$};

\node at (6,-3.75) {$\epsilon_3$};
\node at (-6,-3.75) {$\epsilon_2$};
\draw[fill=gray!90] (-5.225,-2.945) circle (3pt);
\draw[fill=gray!90] (5.225,-2.945) circle (3pt);
\draw (3,-1.72) -- (5.225,-2.945) -- (0,-3.44);
\node at (0,-4.6) {$\gamma_2$};
\draw[fill=gray!90] (0,-3.44) circle (3pt);
\draw[fill=gray!90] (0,5.88) circle (3pt);
\draw[fill=gray!90] (0,0) circle (3pt);
\node at (-1,-0.5) {$\omega$};
\draw[fill=gray!90] (0,3.44) circle (3pt);
\node at (-0.75,3.7) {$\delta_1$};
\draw (5.225,-2.945) -- (3,1.72);

\draw (-3,-1.72) -- (-5.225,-2.945) -- (0,-3.44);
\draw (-5.225,-2.945) -- (-3,1.72);
\end{tikzpicture}
\end{center}
\caption{The Ken-katabami graph.\label{figure.kenkatabami}}
\end{figure}
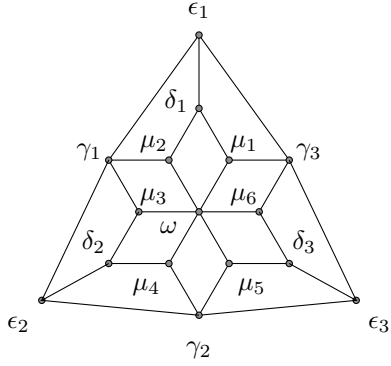
\begin{theorem}\label{theorem.kenkatabami}
There exists a graph $K$ such that $X_K$ is $\Theta(\log(n))$-phased block gluing.
\end{theorem}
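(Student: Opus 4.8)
The strategy is to exhibit the explicit graph $K$ of Figure~\ref{figure.kenkatabami}, the ``Ken-katabami'' graph, and to verify two things about it. First, I would show that $K$ is \emph{not} square-decomposable but nonetheless has a \emph{finite} square cover $\quadcover{K}$; by Theorem~\ref{thm:main1} this rules out $\gamma_K \in \Theta(n)$, and by Proposition~\ref{proposition.square.to.gamma} together with Theorem~\ref{theorem.square.dismantable.to.log} (applied to the square-decomposable graph $\quadcover{K}$, which is square-decomposable by Proposition~\ref{proposition.square.cover.decomposable}) we get the upper bound $\gamma_K(n) = O(\log n)$. So the real content is the matching lower bound $\gamma_K(n) = \Omega(\log n)$, which is what the rest of the proof must establish. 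The point of designing $K$ with a finite square cover that is strictly larger than $K$ is precisely that some simple cycle $c_0$ of $K$ fails to be decomposable into squares, so there is a genuine ``homotopical obstruction'' living in $\quadcover{K}$ that can only be undone by going around a long structure.

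For the lower bound I would argue as follows. Pick the simple cycle $c_0$ of $K$ that is not square-decomposable (by the construction of $K$ this is the ``outer'' cycle through the $\epsilon_i,\gamma_i,\delta_i$ vertices, or some specified cycle), and consider the two block patterns built from $c_0^{\,n}$ and from $t^{\,n\,l(c_0)/2}$ for a suitable spine $t$ — the analogue, in this setting, of the walks $u$ and $v$ used in the proof of Theorem~\ref{thm:main1}. Using the lift $z \in X_{\quadcover{K}}$ of a hypothetical configuration interpolating between these two patterns at vertical distance $k$ (Proposition~\ref{prop.square.to.graph}), I would track how the $\sim_\square$-class evolves along each row. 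Because $c_0$ is not a product of squares, following $c_0$ once in $\quadcover{K}$ moves by a fixed nonzero ``winding'' in a part of $\quadcover{K}$ that is an infinite-diameter-free but still large finite quotient; following $c_0^{\,n}$ therefore accumulates an amount of displacement in $\quadcover{K}$ that is $\Omega(\log n)$ away, in $d_{\quadcover{K}}$, from the class of $t^{\,\cdot}$, \emph{because} the relevant part of $\quadcover{K}$ is organised so that going from a point at ``level $n$'' back to the trivial class costs $\Omega(\log n)$ steps. Concretely I expect $\quadcover{K}$ to contain a binary-tree-like gadget (reflecting the triangular / $\mathbb{Z}^2$ structure of how cycles of length $4$ decompose), so that the distance in $\quadcover{K}$ between $z$'s class at the two ends of a row of length $n$ is $\Theta(\log n)$, while the two bottom rows force those classes to be nearly equal; the triangle inequality then gives $k = \Omega(\log n)$, hence $\gamma_K(n) = \Omega(\log n)$.

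In more detail, the lower-bound mechanism should mirror the schematic figure after Theorem~\ref{thm:main1}: along the top row we read $c_0^{\,n}$, whose lift lands at a class $\zeta_n$ in $\quadcover{K}$ at combinatorial distance $\Theta(\log n)$ from $\pi_a(a)$; along a row at height $k$ we read a power of a spine, whose lift is the trivial class (empty cycle), so $z_{0,k}=z_{Nn,k}$ where $N = l(c_0)$; vertical mixing in $X_{\quadcover{K}}$ costs one step of $d_{\quadcover{K}}$ per unit of vertical distance (Remark~\ref{remark.shift.neighbors} applied in $\quadcover{K}$), so $d_{\quadcover{K}}(z_{0,0},z_{0,k}) \le k$ and likewise on the right edge. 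The triangle inequality $d_{\quadcover{K}}(z_{0,0}, z_{Nn,0}) \le 2k$ then forces $2k \ge \Theta(\log n)$. The one subtlety is that the distance in $\quadcover{K}$ between the lift of $c_0^{\,n}$ and the lift of $t^{\,\cdot}$ must genuinely grow like $\log n$ and not faster or slower; this is where the precise combinatorics of $\quadcover{K}$ — that it is a finite but ``self-similar, tree-flavoured'' graph — has to be computed from Figure~\ref{figure.kenkatabami}.

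\textbf{Main obstacle.} The hard part is the explicit analysis of $\quadcover{K}$: proving that the chosen cycle $c_0$ is not square-decomposable (so that $K \not\cong \quadcover{K}$ and the obstruction is real), proving that $\quadcover{K}$ is nevertheless finite (so that the $O(\log n)$ bound from Section~\ref{section.finite.square.cover} applies and we are \emph{not} in the $\Theta(n)$ regime of Theorem~\ref{thm:main1}), and — most delicate — extracting from the structure of $\quadcover{K}$ the exact $\Theta(\log n)$ growth of the distance needed for the lower bound. Everything else (the lifting argument, the triangle-inequality bookkeeping, reducing general patterns to cycles via Lemma~\ref{lemma.walks.to.cycles} and Lemma~\ref{lemma.even}) is routine given the machinery already developed in the earlier sections.
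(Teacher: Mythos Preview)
Your proposal contains a genuine gap in the lower-bound argument, and it also rests on a factual error about $K$.

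First, the paper states (and it is not hard to check) that the Ken-katabami graph $K$ \emph{is} square-decomposable. Hence by Proposition~\ref{proposition.equal.square} we have $\quadcover{K}\cong K$, and the $O(\log n)$ upper bound follows directly from Theorem~\ref{theorem.square.dismantable.to.log} applied to $K$ itself, with no detour through Proposition~\ref{proposition.square.to.gamma}. Your claim that $K$ is not square-decomposable, and that some cycle $c_0$ survives as a nontrivial class in $\quadcover{K}$, is simply false.

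More importantly, even granting your hypotheses, the lower-bound mechanism you sketch cannot produce $\Omega(\log n)$. You assume $\quadcover{K}$ is \emph{finite} (this is needed for the upper bound), and then you try to show that the lift of $c_0^{\,n}$ lands at distance $\Theta(\log n)$ in $\quadcover{K}$ from the trivial class. But a finite graph has bounded diameter: every distance in $\quadcover{K}$ is at most $\mathrm{diam}(\quadcover{K})=O(1)$, so the triangle-inequality argument you borrow from Theorem~\ref{thm:main1} yields only $k\ge O(1)$, not $k\ge\Omega(\log n)$. A finite graph cannot contain a ``binary-tree-like gadget'' with unbounded distances; this is an internal contradiction in your plan. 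The lifting argument of Theorem~\ref{thm:main1} is a dichotomy: if $\quadcover{K}$ is infinite you get a linear lower bound, and if it is finite you get nothing.

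The paper's lower bound uses an entirely different idea that lives in $K$, not in $\quadcover{K}$. One fixes the outer $6$-cycle $c=\epsilon_1\gamma_1\epsilon_2\gamma_2\epsilon_3\gamma_3\epsilon_1$ and defines a potential $\mu_c(p)$ on walks $p$: the largest $m$ such that $c^m$ occurs as a subword of $p$. A direct case analysis of the neighbours of $c^n$ in $\Delta_K^{6n}$ (using the rigidity of common neighbours among the $\epsilon_i,\gamma_i$) shows that any neighbour $q$ of a walk $p$ satisfies $\mu_c(q)\ge \tfrac{1}{2}\mu_c(p)-3$. Since $\mu_c(c^n)=n$ and $\mu_c(t^{3n})=0$, any path in $\Delta_K^{6n}$ from $c^n$ to a power of a spine must have length $\Omega(\log n)$. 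This ``potential can at most halve per step'' argument is the missing idea; nothing in the square-cover machinery supplies it.
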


\begin{proof}
$K$ is the Ken-katabami graph shown on Figure~\ref{figure.kenkatabami} (we
use the notations of the figure for the vertices)\footnote{The name comes from a visual similarity with the Ken-katabami (\begin{CJK}{UTF8}{ipxm}剣片喰\end{CJK}) Japanese crest, see \url{https://commons.wikimedia.org/wiki/File:Ken-Katabami_inverted.png}}. Since the graph is clearly square decomposable, Theorem~\ref{theorem.square.dismantable.to.log} applies, and $\gamma_K(n) = O(\log(n))$. Thus it is sufficient to prove that $\log(n) = O(\gamma_K(n))$.

\begin{enumerate}
\item \textbf{Notations:}
Let us denote by $c = \epsilon_1 \gamma_1 \epsilon_2 \gamma_2 \epsilon_3 \gamma_3 \epsilon_1$ the anti-clockwise exterior cycle of the graph $K$.

For any walk $p$ on the graph $K$, a $c$-block of $p$ is a maximal word of the form $c^n$ in $p$. This $n$ is called the order of this $c$-block. We also denote by $\mu_c(p)$ the maximal order of a $c$-block.

\item \textbf{Lower bound on $\mu_c (p_2)$ for $p_2$ neighbor of 
$p_1$:}

\textbf{(i) Claim:} We claim that for $n$ large enough and two walks $p, q$ that are neighbours in $\Delta_K^{N}$, the following formula holds: 
\[\mu_c(p) \ge\frac{1}{2} \mu_c(q) -3.\]

\textbf{(ii) Characteristics of the neighbors of $c^n$ in 
$\Delta_K^{6n}$:} For all $n \ge4$, let us prove that a neighbor $p$ of $q = c^n$ in the graph $\Delta_K^{6n}$ is of the form 
\begin{enumerate}
\item $u\odot w\odot v$ with $|w| \le 3$, 
\item $u\odot w$ or $w\odot v$ with $|w|=1$, 
\item $u$ or $v$,
\end{enumerate}
where the words $u$ and $v$ are respectively a right shift of a prefix of $c^n$ and a left shift of a suffix of $c^n$.

The only common neighbor of $\epsilon_i$ and $\epsilon_{i+1}$ is $\gamma_i$ and the only common neighbor of $\gamma_i$ and $\gamma_{i+1}$ is 
$\epsilon_i$ (where for technical reasons the index $i$ is identified with the corresponding element of $\mathbb{Z}/3\mathbb{Z}$). Take an arbitrary $k$, and assume for simplicity that $q_k= \epsilon_2$ (other cases are processed similarly). Then possible values for $p_k$ are $\gamma_1, \gamma_2$ or $\delta_2$, and we can check that the only possibilities for $p$ are the following:

\begin{figure}[h!]
\begin{tabular}{ccc}
\begin{tikzpicture}[scale=0.65]
\draw[step=1,black,thin] (0,0) grid (5,2);
\node at (2.5,3.3) {$k+$};

\node at (0.5,2.5) {$-2$};
\node at (1.5,2.5) {$-1$};
\node at (2.5,2.5) {$0$};
\node at (3.5,2.5) {$+1$};
\node at (4.5,2.5) {$+2$};

\node at (-0.5, 1.5) {$q$};
\node at (-0.5, 0.5) {$p$};

\node at (.5, 1.5) {$\epsilon_1$};
\node at (1.5, 1.5) {$\gamma_1$};
\node at (2.5, 1.5) {$\epsilon_2$};
\node at (3.5, 1.5) {$\gamma_2$};
\node at (4.5, 1.5) {$\epsilon_3$};

\node at (2.5, 0.5) {$\gamma_1$};
\node at (3.5, 0.5) {$\epsilon_2$};
\node at (4.5, 0.5) {$\gamma_2$};
\end{tikzpicture}
&
\begin{tikzpicture}[scale=0.65]
\draw[step=1,black,thin] (0,0) grid (5,2);
\node at (2.5,3.3) {$k+$};

\node at (0.5,2.5) {$-2$};
\node at (1.5,2.5) {$-1$};
\node at (2.5,2.5) {$0$};
\node at (3.5,2.5) {$+1$};
\node at (4.5,2.5) {$+2$};

\node at (-0.5, 1.5) {$q$};
\node at (-0.5, 0.5) {$p$};

\node at (.5, 1.5) {$\epsilon_1$};
\node at (1.5, 1.5) {$\gamma_1$};
\node at (2.5, 1.5) {$\epsilon_2$};
\node at (3.5, 1.5) {$\gamma_2$};
\node at (4.5, 1.5) {$\epsilon_3$};
\node at (.5, 0.5) {$\gamma_1$};
\node at (1.5, 0.5) {$\epsilon_2$};
\node at (2.5, 0.5) {$\gamma_2$};
\end{tikzpicture}
&
\begin{tikzpicture}[scale=0.65]
\draw[step=1,black,thin] (0,0) grid (5,2);
\node at (2.5,3.3) {$k+$};

\node at (0.5,2.5) {$-2$};
\node at (1.5,2.5) {$-1$};
\node at (2.5,2.5) {$0$};
\node at (3.5,2.5) {$+1$};
\node at (4.5,2.5) {$+2$};

\node at (-0.5, 1.5) {$q$};
\node at (-0.5, 0.5) {$p$};

\node at (.5, 1.5) {$\epsilon_1$};
\node at (1.5, 1.5) {$\gamma_1$};
\node at (2.5, 1.5) {$\epsilon_2$};
\node at (3.5, 1.5) {$\gamma_2$};
\node at (4.5, 1.5) {$\epsilon_3$};

\node at (.5, 0.5) {$\gamma_1$};
\node at (1.5, 0.5) {$\nicefrac{\epsilon_2}{\mu_3}$};
\node at (2.5, 0.5) {$\delta_2$};
\node at (3.5, 0.5) {$\nicefrac{\epsilon_2}{\mu_4}$};
\node at (4.5, 0.5) {$\gamma_2$};
\end{tikzpicture}
\end{tabular}
\end{figure}

In the leftmost case ($p_k= q_{k-1}$), we see that $p_{\llbracket k, 6n\rrbracket}= q_{\llbracket k,6n\rrbracket-1} $.
Similarly, in the center case, ($p_k= q_{k+1}$), we have $p_{\llbracket 0, k\rrbracket}= q_{\llbracket 0, k\rrbracket+1}$.
Finally, in the rightmost case ($p_k\notin\{q_{k-1}, q_{k+1}\}$), we can see that $p_{\llbracket 0, k-2\rrbracket}= q_{\llbracket 0, k-2\rrbracket +1}$ or $p_{\llbracket k+2, 6n\rrbracket}= q_{\llbracket k+2,6n\rrbracket-1}$.

Therefore:
\begin{itemize}
\item if for any $k$, $p_k\notin\{q_{k-1}, q_{k+1}\}$, we are in case (a).
\item if $p_k= q_{k+1}$ for all $k$, we are in case (b) or (c); similarly if $p_k= q_{k-1}$.
\item we cannot have $p_k = q_{k-1}$ and $p_{k+1} = q_{k+2}$ since $q_{k-1}$ and $q_{k+2}$ are not neighbours in the graph.
\end{itemize}

\textbf{(iii) Lower bound on $\mu_c(p)$ for $p$ a neighbor of $c^n$ in $\Delta_K^{6n}$:}

If $u$ is a prefix or suffix of $c^n$, it is clear that $\mu_c(u) \ge\lfloor \frac{l(u)}{6} \rfloor$. Furthermore, for any walk $p'$, $\mu_c(p') \ge \mu_c(p'')-1$ for any shift $p''$ of $p'$.

As a consequence of point \textbf{(ii)}, any neighbor $p$ of $c^n$ in $\Delta_K^{6n}$ has a subword which is a shift of a prefix or suffix of $c^n$ and whose length is at least $\frac{6n-3}{2}$. Thus we have that 
\[\mu_c(p) \ge \left\lfloor\frac{1}{6} \frac{6n-3}{2} - 1\right\rfloor \ge\frac{n}{2} - 3.\]

\textbf{(iv) Proof of the claim:} Let us consider $p$ and $q$ two words which are neighbors in $\Delta_K^{N}$. By definition, if $n=\mu_c(p)$, then 
$p$ has $c^n$ as a subword. The corresponding subword of $q$ is a neighbour of $c^n$ in $\Delta_K^{6n}$, which means that $\mu_c (q) \ge \frac{n}{2} - 3 = \frac{\mu_c(p)}{2}-3$.

\item \textbf{Lower bound on $\gamma_K(n)$:} For any integer $n$, consider $c^n$ and $q_n$ any walk of length $6n$ that does not contain $c$, such as a repeated spine. Thus $\mu_c(c^n) = n$ and $\mu_c(q_n) = 0$.
By the previous claim, any walk $p_0, \dots, p_m$ in $\Delta_K^{6n}$ satisfies $\mu_c (p_{i+1}) \ge \frac{\mu_c(p_i)}{2}-3$ for all $i$.
It follows that $\log(n) = O(d_G(c^n, q_n))$. By Lemma~\ref{lemma.even} and Proposition~\ref{prop:block-gluing}, this implies that $\log(n) = O(\gamma_K(n))$.\qedhere


\end{enumerate}
\end{proof}

\section{Open problems\label{section.conclusion}} 

We leave two main problems for further research. The \textit{first problem} is the classification of block gluing classes for Hom shifts. We conjecture the following: 

\begin{conjecture}
The only possible classes of gap functions for Hom shifts are $\Theta(1)$, $\Theta(\log(n))$ and $\Theta(n)$.
\end{conjecture}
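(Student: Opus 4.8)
The conjecture is equivalent, given what has already been proved, to a single missing dichotomy. Indeed, by Theorem~\ref{thm:main1} we have $\gamma_G = \Theta(n)$ whenever $\quadcover{G}$ is infinite, and by Theorem~\ref{theorem.square.dismantable.to.log} together with Propositions~\ref{proposition.square.to.gamma} and~\ref{proposition.square.cover.decomposable} we have $\gamma_G = O(\log n)$ whenever $\quadcover{G}$ is finite; moreover Proposition~\ref{prop:block-gluing} already rules out any behaviour strictly between $\Theta(\log n)$ and $\Theta(n)$. So the conjecture reduces to proving: \emph{if $\quadcover{G}$ is finite, then $\gamma_G$ is either $O(1)$ or satisfies $\log n = O(\gamma_G(n))$} --- equivalently, no Hom shift has gap function strictly between $\Theta(1)$ and $\Theta(\log n)$. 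The plan is to attack this by first characterising the $\Theta(1)$ class and then proving a matching $\Omega(\log n)$ lower bound for everything outside it.

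For the $\Theta(1)$ part, since $\gamma_G = O(\gamma_{\quadcover{G}})$ and $\quadcover{G}$ is square-decomposable, I would first reduce to: for a finite square-decomposable graph $H$, when is $\gamma_H = O(1)$? One expects a purely combinatorial answer in the spirit of Theorem~\ref{thm-chandgotia}: $\gamma_H = O(1)$ if and only if $H$ is ``flexible enough'', which I conjecture to mean that $H$ folds --- by repeated identification of vertices with equal neighbourhoods --- onto a single edge or a single (possibly looped) vertex, equivalently that $X_H$ is genuinely block gluing. The four-cycle $C_4$ lies in this class (it folds onto an edge, and directly $\textrm{diam}(\Delta_{C_4}^n)\le 2$ for every $n$), whereas the Ken-katabami graph of Theorem~\ref{theorem.kenkatabami} and larger grid graphs do not. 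Making this rigorous requires proving both implications, and in addition the transfer in the remaining direction $\gamma_{\quadcover{G}} = O(\gamma_G)$, so that the dichotomy for $\quadcover{G}$ lifts back to $G$; this last transfer should follow by lifting a witnessing pair of patterns of $X_G$ to $X_{\quadcover{G}}$ along $\eta$, as in the proof of Theorem~\ref{thm:main1}.

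The hard part --- and the step I expect to be the main obstacle --- is the lower bound: if a finite square-decomposable graph $H$ is not flexible in the above sense, then $\gamma_H = \Omega(\log n)$. The template is the lower-bound half of the proof of Theorem~\ref{theorem.kenkatabami}: one wants a simple cycle $c$ of $H$ and a complexity measure $\mu_c$ on walks (the maximal order of a $c$-block) such that $\mu_c(c^n)$ grows linearly in $n$, $\mu_c$ vanishes on a spine power, and along every edge of $\Delta_H^N$ the value of $\mu_c$ can at most roughly halve; iterating the halving forces the distance in $\Delta_H^{n\cdot l(c)}$ between $c^n$ and a spine power of the same length to be $\Omega(\log n)$, hence $\gamma_H = \Omega(\log n)$ by Proposition~\ref{prop:block-gluing}. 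The crux is producing, in \emph{every} non-flexible square-decomposable graph, a cycle $c$ with such a rigidity property: in the Ken-katabami graph it came from $\gamma_i$ being the \emph{unique} common neighbour of $\epsilon_i$ and $\epsilon_{i+1}$, but a general mechanism is needed. I would try to show that non-flexibility forces a ``bottleneck'' cycle --- morally, if no cycle had the halving property then local reconfigurations could be propagated to glue arbitrary patterns in bounded time, contradicting unboundedness of $\gamma_H$ --- and, when $c$ is not rigid in this crude sense, to replace $\mu_c$ by a suitable weighted count that still contracts by a uniform factor $<1$ along edges of $\Delta_H^N$.

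An alternative, possibly cleaner route bypasses the cycle altogether and reasons self-similarly with $d_G(n) = \textrm{diam}(\Delta_G^n)$: it would suffice to prove that once $d_G(N)\ge 3$ for some $N$, necessarily $d_G(2N)\ge d_G(N) + c$ for a constant $c = c(G) > 0$, since iteration then gives $d_G(n) = \Omega(\log n)$, while $d_G(N)\le 2$ for all $N$ gives $d_G = O(1)$. This amounts to showing that reconfiguring a walk of length $2N$ is never essentially cheaper than reconfiguring one of length $N$ plus a bounded overhead, i.e.\ that the constraints along a long walk genuinely compound --- so the real difficulty, understanding how reconfigurations propagate along long walks, is the same obstacle in both approaches.
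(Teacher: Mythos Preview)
The statement you were asked to prove is a \emph{conjecture}, not a theorem: the paper does not prove it. It is explicitly listed in Section~\ref{section.conclusion} as one of the two main problems left open for further research. There is therefore no ``paper's own proof'' to compare against.

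Your proposal is accordingly not a proof but a research strategy, and you are transparent about this (``I would try to show'', ``the step I expect to be the main obstacle'', etc.). Your reduction is correct: given Theorems~\ref{thm:main1} and~\ref{theorem.square.dismantable.to.log} together with Propositions~\ref{proposition.square.to.gamma} and~\ref{proposition.square.cover.decomposable}, the conjecture is equivalent to showing that no Hom shift with finite square cover has gap function strictly between $\Theta(1)$ and $\Theta(\log n)$. The paper itself offers only an informal intuition for this remaining dichotomy --- whether every cycle can be deformed to a trivial one without any intermediate cycle exceeding the original in length --- and explicitly says that formalising it ``has proven difficult''. Your two proposed routes (a folding characterisation of the $\Theta(1)$ class plus a generalised $\mu_c$-type lower bound, or a self-similar inequality $d_G(2N)\ge d_G(N)+c$) are plausible lines of attack, but each contains a genuine gap you yourself identify: producing a rigid cycle in \emph{every} non-flexible square-decomposable graph, or proving that reconfiguration cost genuinely compounds. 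Neither obstacle is resolved in the paper, and neither is resolved in your proposal.
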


Any tree provides a $\Theta(1)$-block gluing Hom shift, and there are several nontrivial examples of $\Theta(1)$-phased block gluing Hom shifts (see Figure~\ref{figure.lonely.cluster}).
We can construct more examples of $\Theta(\log(n))$-block gluing Hom shifts by 'gluing' these graphs together (see Figure~\ref{figure.counterexample}). What intuitively separates these two sets of graphs is whether every cycle can be deformed to a trivial cycle so that no intermediate cycle is larger that the original. However formalizing this intuition has proven difficult.

\begin{figure}[ht]
\begin{center}
\begin{tikzpicture}[scale=0.4]
\draw[fill=gray!90] (0,0) circle (3pt);
\draw[fill=gray!90] (2,2) circle (3pt);
\draw[fill=gray!90] (2,0) circle (3pt);
\draw[fill=gray!90] (0,2) circle (3pt);
\draw[fill=gray!90] (3,3) circle (3pt);
\draw[fill=gray!90] (3,1) circle (3pt);
\draw[fill=gray!90] (1,3) circle (3pt);
\draw[fill=gray!90] (4,4) circle (3pt);
\draw[fill=gray!90] (4,1) circle (3pt);
\draw[fill=gray!90] (2,-1) circle (3pt);
\draw[fill=gray!90] (-1,-1) circle (3pt);
\draw[fill=gray!90] (-1,2) circle (3pt);
\draw[fill=gray!90] (1,4) circle (3pt);
\draw (0,0) -- (0,2) -- (2,2) -- (2,0) -- (0,0);
\draw (0,2) -- (1,3) -- (3,3) -- (2,2);
\draw (3,3) -- (3,1) -- (2,0);
\draw (3,3) -- (4,4) -- (4,1) -- (3,1);
\draw (2,0) -- (2,-1) -- (4,1);
\draw (2,-1) -- (-1,-1) -- (0,0);
\draw (-1,-1) -- (-1,2) -- (0,2);
\draw (-1,2) -- (1,4) -- (1,3);
\draw (1,4) -- (4,4);

\begin{scope}[xshift=11cm,yshift=1.5cm]
\draw[fill=gray!90] (0,0) circle (3pt);
\draw[fill=gray!90] (1,1) circle (3pt);
\draw[fill=gray!90] (2,0) circle (3pt);
\draw[fill=gray!90] (1,-1) circle (3pt);
\draw[fill=gray!90] (1,3) circle (3pt);
\draw[fill=gray!90] (1,-3) circle (3pt);
\draw[fill=gray!90] (3,1) circle (3pt);
\draw[fill=gray!90] (-2,0) circle (3pt);
\draw[fill=gray!90] (3,-1) circle (3pt);
\draw[fill=gray!90] (4,0) circle (3pt);
\draw (0,0) -- (1,1) -- (2,0) -- (1,-1) -- (0,0);
\draw (1,1) -- (1,3) -- (3,1);
\draw (1,3) -- (-2,0) -- (0,0);
\draw (-2,0) -- (1,-3) -- (1,-1);
\draw (1,-3) -- (3,-1);
\draw (2,0) -- (3,1) -- (4,0) -- (3,-1) -- (2,0);
\end{scope}
\end{tikzpicture}
\end{center}
\caption{Example of graphs whose Hom shift is $\Theta(1)$-phased block gluing.\label{figure.lonely.cluster}}
\end{figure}
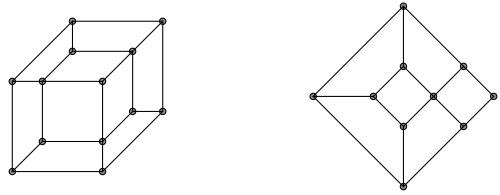

\begin{figure}[ht]
\begin{center}
\begin{tikzpicture}[scale=0.4]
\draw (0,0) -- (0,2) -- (2,2) -- (2,0) -- (0,0);
\draw (0,2) -- (1,3) -- (3,3) -- (2,2);
\draw (3,3) -- (3,1) -- (2,0);
\draw (3,3) -- (4,4) -- (4,1) -- (3,1);
\draw (2,0) -- (2,-1) -- (4,1);
\draw (2,-1) -- (-1,-1) -- (0,0);
\draw (-1,-1) -- (-1,2) -- (0,2);
\draw (-1,2) -- (1,4) -- (1,3);
\draw (1,4) -- (4,4);
\draw[fill=gray!90] (0,0) circle (3pt);
\draw[fill=gray!90] (2,2) circle (3pt);
\draw[fill=gray!90] (2,0) circle (3pt);
\draw[fill=gray!90] (0,2) circle (3pt);
\draw[fill=gray!90] (3,3) circle (3pt);
\draw[fill=gray!90] (3,1) circle (3pt);
\draw[fill=gray!90] (1,3) circle (3pt);
\draw[fill=gray!90] (4,4) circle (3pt);
\draw[fill=gray!90] (4,1) circle (3pt);
\draw[fill=gray!90] (2,-1) circle (3pt);
\draw[fill=gray!90] (-1,-1) circle (3pt);
\draw[fill=gray!90] (-1,2) circle (3pt);
\draw[fill=gray!90] (1,4) circle (3pt);

\begin{scope}[xshift=-5cm,yshift=-5cm]
\draw (0,0) -- (0,2) -- (2,2) -- (2,0) -- (0,0);
\draw (0,2) -- (1,3) -- (3,3) -- (2,2);
\draw (3,3) -- (3,1) -- (2,0);
\draw (3,3) -- (4,4) -- (4,1) -- (3,1);
\draw (2,0) -- (2,-1) -- (4,1);
\draw (2,-1) -- (-1,-1) -- (0,0);
\draw (-1,-1) -- (-1,2) -- (0,2);
\draw (-1,2) -- (1,4) -- (1,3);
\draw (1,4) -- (4,4);
\draw[fill=gray!90] (0,0) circle (3pt);
\draw[fill=gray!90] (2,2) circle (3pt);
\draw[fill=gray!90] (2,0) circle (3pt);
\draw[fill=gray!90] (0,2) circle (3pt);
\draw[fill=gray!90] (3,3) circle (3pt);
\draw[fill=gray!90] (3,1) circle (3pt);
\draw[fill=gray!90] (1,3) circle (3pt);
\draw[fill=gray!90] (4,4) circle (3pt);
\draw[fill=gray!90] (4,1) circle (3pt);
\draw[fill=gray!90] (2,-1) circle (3pt);
\draw[fill=gray!90] (-1,-1) circle (3pt);
\draw[fill=gray!90] (-1,2) circle (3pt);
\draw[fill=gray!90] (1,4) circle (3pt);
\end{scope}
\end{tikzpicture}
\qquad
\begin{tikzpicture}[scale=0.4]
\draw (0,0) -- (1,1) -- (2,0) -- (1,-1) -- (0,0);
\draw (1,1) -- (1,3) -- (3,1);
\draw (1,3) -- (-2,0) -- (0,0);
\draw (-2,0) -- (1,-3) -- (1,-1);
\draw (1,-3) -- (3,-1);
\draw (2,0) -- (3,1) -- (4,0) -- (3,-1) -- (2,0);

\draw (4,0) -- (5,1) -- (6,0) -- (5,-1) -- (4,0);
\draw (5,1) -- (7,3) -- (10,0) -- (8,0);
\draw (7,1) -- (7,3);
\draw (5,-1) -- (7,-3) -- (10,0);
\draw (7,-3) -- (7,-1);
\draw (6,0) -- (7,1) -- (8,0) -- (7,-1) -- (6,0);

\draw[fill=gray!90] (0,0) circle (3pt);
\draw[fill=gray!90] (1,1) circle (3pt);
\draw[fill=gray!90] (2,0) circle (3pt);
\draw[fill=gray!90] (1,-1) circle (3pt);
\draw[fill=gray!90] (1,3) circle (3pt);
\draw[fill=gray!90] (1,-3) circle (3pt);
\draw[fill=gray!90] (3,1) circle (3pt);
\draw[fill=gray!90] (-2,0) circle (3pt);
\draw[fill=gray!90] (3,-1) circle (3pt);
\draw[fill=gray!90] (4,0) circle (3pt);

\draw[fill=gray!90] (5,1) circle (3pt);
\draw[fill=gray!90] (6,0) circle (3pt);
\draw[fill=gray!90] (5,-1) circle (3pt);
\draw[fill=gray!90] (7,1) circle (3pt);
\draw[fill=gray!90] (7,-1) circle (3pt);
\draw[fill=gray!90] (7,3) circle (3pt);
\draw[fill=gray!90] (7,-3) circle (3pt);
\draw[fill=gray!90] (8,0) circle (3pt);
\draw[fill=gray!90] (10,0) circle (3pt);
\end{tikzpicture}

\end{center}
\caption{Example of graphs 
whose Hom shift is $\Theta(\log(n))$-phased block gluing.
\label{figure.counterexample}}
\end{figure}
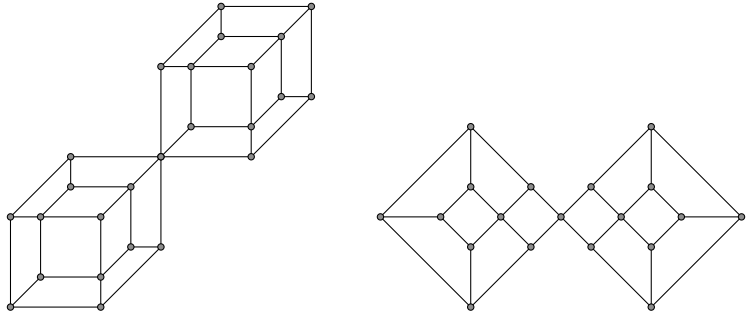

The \textit{second problem} is finding an algorithm which, 
provided a graph $G$, decides in which class (for $\Theta$) its gap function belongs to. In particular, since we know 
that there is no intermediate class between $O(\log(n))$ and $\Theta(n)$ and that the class to which a graph belongs depends on whether or not its square cover is finite, the key question seems to be:

\begin{question}
Is there an algorithm which decides, given a finite graph $G$, whether $\quadcover{G}$ is finite or not ?
\end{question}

This question seems to be close to known undecidability results: the square cover is finite if and only if the quotient of the \textit{fundamental group} of $G$ by squares of $G$ is finite, and it is not possible in general to decide if a group defined by a finite number of generators and relations is finite or not. This is a consequence of the Adian-Rabin theorem \cite{A57, R58} -- see \cite{NB22} for a translation and exposition of Adian's work.

Additional open questions on Hom shifts can be found in \cite{C17, CM18}. For example, we do not know how to characterise mixing properties relative to general shapes (not only rectangular) in Hom shifts.

\section*{Acknowledgements}

The second author wishes to thank Nishant Chandgotia for introducing him to Hom shifts and many fruitful discussions, and Jan van der Heuvel along with the organizers of CoRe 2019 where the Ken-Katabami graph was found. (\textit{As well the first author is thankful to the second author for introducing him to Hom shifts, 
as well as the third author is thankful to the first one}). 

The second author was partially supported by a LISN internal project. The research of the first and the third author was supported in part by  National Science Centre, Poland (NCN), grant no. 2019/35/B/ST1/02239. 

 The authors declare they have no financial interests.

\bibliographystyle{alpha}
\bibliography{biblio.bib}
\end{document}